\providecommand{\U}[1]{\protect\rule{.1in}{.1in}}
\numberwithin{equation}{section}
\newtheorem{theorem}{Theorem}[section]
\newtheorem{lemma}[theorem]{Lemma}
\newtheorem{corollary}[theorem]{Corollary}
\newtheorem{proposition}[theorem]{Proposition}
\newtheorem{remark}[theorem]{Remark}
\newtheorem{definition}[theorem]{Definition}
\newtheorem{assumption}[theorem]{Assumption}
\def\<{\langle}
\def\>{\rangle}
\def\d{\,{\rm d}}
\def\&{\,&}
\def\div{{\rm div}}
\def\E{\mathbb{E}}
\def\N{\mathbb{N}}
\def\P{\mathcal{P}}
\def\R{\mathbb{R}}
\def\F{\mathcal{F}}
\def\B{\mathcal{B}}
\def\p{\partial}
\def\Id{\operatorname{Id}}
\def\law{\operatorname{Law}}
\def\eps{\varepsilon}
\def\vphi{\varphi}
\def\M{\mathcal{M}}
\def\W{\mathcal{W}}
\def\t{\widetilde}
\begin{document}

\title{From Kac particles to the Landau equation with hard potentials: BBGKY hierarchy method}

\author{Shuchen Guo\footnote{Email: guo@maths.ox.ac.uk. Mathematical Institute, University of Oxford, Oxford, OX2 6GG, UK.}}

\maketitle

\vspace{-20pt}

\begin{abstract}

We study the Kac particle model for the space-homogenous Landau equation with hard potentials. By showing a sharper Povzner-type inequality, we obtain the uniform-in-time and uniform-in-$N$ propagation of exponential moment for the first marginal of the solution of the many-particle Liouville equation. This key property enables us to show the uniqueness of weak solutions of the corresponding infinite Landau hierarchy by coupling method. As a result, we prove the propagation of chaos for the Landau equation with hard potentials.
\end{abstract}



\section{Introduction}\label{sec intro}  

Lev Landau derived a nonlinear and nonlocal diffusion equation, now known as the Landau equation, from the Boltzmann equation via the \textit{grazing collision limit} \cite{landau1936kinetische}. This equation plays a fundamental role in plasma physics to describe weakly collisional interactions. In this work, we consider the space-homogenous Landau equation with hard potentials in $\R^3$, given by
\begin{equation}\label{eq Landau}
\p_t f=\div\left[(A\ast f)\nabla f-(B\ast f)f \right],
\end{equation}
where the matrix-valued function $A$ and the vector-valued function $B$ are defined by
\begin{equation}\label{def AB}
A(z):=|z|^{\gamma+2}\Pi(z),\quad\text{with}\quad\Pi(z)=\Id-\frac{z\otimes z}{|z|^2},\qquad B(z):=-2z|z|^\gamma, \quad\text{for}\quad \gamma\in(0,1].    \end{equation}
Our main result derives the Landau equation \eqref{eq Landau} from Kac particles under \textit{mean-field} scaling via the \textit{BBGKY hierarchy method}. 

The so-called BBGKY hierarchy (named after Bogoliubov, Born, Green, Kirkwood and Yvon) is a string of equations satisfied by the marginals of an $N$-particle distribution. The BBGKY hierarchy method provides a systematic framework to show the propagation of chaos for interacting particles. In general, the method consists of two main steps: (i) Write the BBGKY hierarchy on marginals of the solution of Liouville equation for $N$-particle systems and show that, as $N \to \infty$, the marginals converge  to a solution of an infinite hierarchy; (ii) Prove that solutions of this infinite hierarchy are unique and coincide with tensor products of the solution of the limiting PDE. In functional analytic framework, these two steps are sometimes referred to as the consistency estimate and the stability estimate, respectively. 

We consider the \textit{unmodified Kac article} systems associated with the Landau equation with hard potentials. The corresponding Liouville equation for the $N$-particle distribution is given by
\begin{equation}\label{eq Liouville}
\begin{aligned}
\p_t f_{N}=\frac{1}{2N}\sum_{i\neq j}\div_{v^i-v^j}\left[A(v^i-v^j)(\nabla_{v^i}-\nabla_{v^j})f_{N}\right],   
\end{aligned}
\end{equation}
with initial data $f_N(0)=(f^0)^{\otimes N}$. 
One can formally associate to the Liouville equation \eqref{eq Liouville} a conservative (Kac-like) $N$-particle stochastic system, given by 
\begin{equation}\label{kacparticles}
\d V_t^i=\frac{2}{N}\sum_{j\neq i}^{N}B(V_t^i-V_t^j)\d t+\sqrt{\frac{2}{N}}\sum_{j\neq i}^{N}\sigma(V_t^i-V_t^j)\d Z_t^{i,j},
\end{equation}
where, for $1\leq i<j\leq N$,  $Z_t^{i,j}=W_t^{i,j}$ are $N(N-1)/2$   i.i.d.  $3$-dimensional Brownian motions, while $Z_t^{j,i}=-W_t^{i,j}$ are anti-symmetric. The diffusion matrix $\sigma$ is a square root of the symmetric positive semi-definite matrix $A$, i.e., $\sigma\sigma^T=A$, defined as $\sigma(z)=|z|^{1+\gamma/2}\Pi(z)$.
Assume that the initial data $(V_0^1,\ldots,V_0^N)$ are i.i.d with the distribution $f^0\in \P(\R^3)$. Under above assumptions, the particles are exchangeable, and the total momentum and kinetic energy are conserved almost surely:
\begin{equation}\label{eq conservation}
\sum_{i=1}^NV_t^i=\sum_{i=1}^NV_0^i,\quad\text{and}\quad\sum_{i=1}^N|V_t^i|^2=\sum_{i=1}^N|V_0^i|^2,\quad\text{a.s.}
\end{equation}
For any $m\in\N$ and $1\leq m\leq N-1$, we define the $m$-marginal $f_{N,m}$ by integrating over the variables from $v^{m+1},\ldots,v^N$,
\begin{equation}\label{def marginal}
f_{N,m}=\int_{\R^{3(N-m)}}f_N\d v^{m+1}\cdots\d v^{N}.
\end{equation}
This marginal describes joint distribution of the first $m$ particles in the Kac particles system. Formally, the marginal satisfies the \textit{BBGKY hierarchy}:
\begin{equation}\label{eq BBGKY}
\begin{aligned}
\p_t f_{N,m}
=\&\frac{1}{2N}\sum_{i\neq j\leq m}\div_{v^i-v^j}\left[A(v^i-v^j)\nabla_{v^i-v^j}f_{N,m}\right]\\\&+\frac{N-m}{N}\sum_{i=1}^m\div_{v^i}\int_{\R^3}\left[A(v^i-v^{m+1})\nabla_{v^i-v^{m+1}}f_{N,m+1}\right]\d v^{m+1},
\end{aligned}
\end{equation}
with initial data $f_{N,m}(0)=(f^0)^{\otimes m}$. We now state the assumptions on the initial data $f^0$; see Remark \ref{rmk initial} for further discussion. 
\begin{assumption}\label{assumption hard kac}
The initial data $f^0$ satisfies the following conditions: 
\begin{itemize}
\item[1.] $f^0$ is a probability measure ($f^0\in\P(\R^3)$) , and its density (also denoted by $f^0$) is supported in the ball centred at the origin and with radius $r_0>0$.
    \item [2.] $f^0$ has finite entropy: $\int_{\R^{3}}f^0\log f^0<+\infty$.
\end{itemize}
\end{assumption}
 
The study of BBGKY hierarchies dates back to the 1940s and has been extensively used to derive the Boltzmann equation under different scaling limits. For the Boltzmann equation for hard spheres, Lanford \cite{lanford2005time} proved the propagation of chaos holds in a short time in the \textit{Boltzmann-Grad limit}. The result was later generalised to short-range potentials by King \cite{king1975bbgky}. A more  refined analysis can be found in \cite{gallagher2013newton}.
In the context of space-homogeneous Boltzmann equations for hard spheres and Maxwellian molecules, Mischler and Mouhot \cite{mischler2013kac} revisited the BBGKY method within a functional analytic framework. The BBGKY hierarchy method has also been extended to quantum systems; for instance, \cite{chen2023derivation} derives the quantum Boltzmann equation under \textit{weak-coupling} scaling.

Beyond the Boltzmann equation, the BBGKY hierarchy method has been successfully applied to derive other nonlinear equations. For instance, the cubic nonlinear Schrödinger equation is derived from quantum many-body systems under a moderately interacting scaling; see \cite{erdos2006derivation, erdos2007derivation}. The same authors also derived the Gross–Pitaevskii equation in the local interaction scaling; see \cite{erdos2010derivation}. The compressible Euler equation is derived under a moderate scaling in the quantum setting; see \cite{chen2024derivation}. In the mean-field scaling, which is the focus of this work, derivations of the Vlasov–Fokker–Planck equation and the McKean-Vlasov equation with singular interactions have been carried out in \cite{bresch2022new}. We also refer the reviews \cite{jabin2014review,golse2016dynamics}, where the BBGKY hierarchy method is discussed in the context of mean-field limits. More recently, it is found out that the BBGKY hierarchy approach has also been applied to obtain the sharp rate for propagation of chaos estimates. This was initiated in \cite{lacker2023hierarchies} by Lacker, and subsequently extended to the model with singular interactions in \cite{han2022entropic} and \cite{wang2024sharp}. 

In \cite{Miot2011kac}, the authors initiated a BBGKY-based approach to derive the Landau equation, showing that the marginals of the \textit{modified} Liouville equation associated with Kac particles converge to the infinite Landau hierarchy with Coulomb potential ($\gamma = -3$ in \eqref{def AB}). Subsequently, the first rigorous compactness result for the \textit{unmodified} Kac particle system associated with the Landau equation for the Coulomb potential was obtained in \cite{carrillo2025fisher}. There, the authors established the well-posedness and compactness of the unmodified Liouville equation by proving the monotonicity of the Fisher information. It is observed in \cite{carrillo2025fisher} that the lifted version of Landau equation used by Guillen and Silvestre in \cite{guillen2025landau} is the Liouville equation \eqref{eq Liouville} with $N=2$.  However, as in \cite{Miot2011kac}, a complete propagation of chaos result was not obtained, due to the lack of an effective stability estimate, which is essential in proving the uniqueness of solutions of the infinite Landau hierarchy, which is formally obtained by sending $N\to\infty$  in \eqref{eq BBGKY} and reads as for any $m\in\N$: 
\begin{equation}\label{eq Landau hierarchy}
\p_t f_{m}=\sum_{i=1}^m\div_{v^i}\left[\int_{\R^3}A(v^i-v^{m+1})\left(\nabla_{v^i}f_{m+1}-\nabla_{v^{m+1}}f_{m+1}\right)\d v^{m+1}\right],
\end{equation}
with initial data $f_m(0)=(f^0)^{\otimes m}$. 

Regarding the uniqueness of infinite hierarchies, Narnhofer and Sewell first established the uniqueness of the Vlasov hierarchy in \cite{narnhofer1981vlasov}. This result was later generalised by Spohn in \cite{spohn1981vlasov} using a functional-analytic approach based on the Hewitt–Savage theorem. For a quantitative stability estimate in this setting, see also \cite{golse13empirical}. Uniqueness results for the Gross–Pitaevskii hierarchy have been obtained through combinatorial arguments in \cite{erdos2007derivation} and \cite{klainerman2008uniqueness}. Also, the board game method introduced in \cite{klainerman2008uniqueness} has since been adapted to establish uniqueness for the Boltzmann hierarchy, as in \cite{chen2019local, ampatzoglou2025global}. More recently, in \cite{bresch2022new}, the authors proved the uniqueness of the Vlasov–Fokker–Planck hierarchy. Their strategy involves first deriving a hierarchy of ordinary differential inequalities and then obtaining a uniform bound via an iterative argument. In our work, we adopt a similar approach by adapting the SDE coupling method to derive a differential inequality hierarchy. Our analysis is more complicated than that in \cite{bresch2022new}, due to the presence of a cut-off on the relative velocity. To address this difficulty, we  show the uniform-in-time propagation of
exponential moments for the associated Kac particles, and follow the strategy introduced in \cite{lacker2023hierarchies} to obtain a refined stability estimate for the hierarchy. 

As mentioned above, a crucial ingredient in our proof is the propagation of exponential moments, which is usually a key argument in kinetic theory. This idea was initiated in \cite{bobylev1997moment} for the space-homogeneous Boltzmann equation for hard spheres. Since then, it has been extended and refined in various settings; see, for example, \cite{alonso2013new,cao2024propagation}. For the Landau equation with hard potentials, the propagation of exponential moments was established in \cite{fournier2017kac}. Furthermore, \cite{fournier2021stability} showed that exponential moments are created for all $t>0$, meaning they become finite after any positive time. See also \cite{fournier2021exponential} and the references therein.

However, obtaining exponential moment estimates for the many-particle evolution, namely, the Liouville equation, can be more challenging. Existing results typically give polynomial moments estimates. Regarding Kac particle systems, the propagation of polynomial moments has been investigated in \cite{sznitman1984equations}; see also  \cite{mischler2013kac,cortez2018quantitative,liu2024rate} for the Boltzmann equation with Maxwellian molecules and hard potentials. For the Landau equation, similar results have been obtained in \cite{carrapatoso2016propagation} for Maxwellian molecules case, and in \cite{fournier2017kac} for hard potentials case. In our manuscript, we fill the gap between polynomial moments and exponential ones for Kac particles associated to Landau equation with hard potentials, where we prove a sharper version of Povzner-type inequality by assuming a stronger initial data, and use an expansion argument to carefully sum up all integer exponents. The moment result reads as follows.
\begin{theorem}[Propagation of the exponential moment]\label{thm exponential}
Under Assumption \ref{assumption hard kac}, there exists some constant $\xi_{r_0,\gamma}$ depending only on $r_0$ and $\gamma$, such that the first marginal of the weak solution of \eqref{eq Liouville} satisfies the following  uniform-in-time and uniform-in-$N$ exponential moment bound,
\begin{equation}\label{ineq exponential}
\sup_{t\in[0,\infty)}\int_{\R^{3}}\exp\left(\xi_{r_0,\gamma}|v|^\frac{4}{\gamma+2}\right)f_{N,1}(t,v)\d v<C.    
\end{equation}  
\end{theorem}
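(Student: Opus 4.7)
The plan is to propagate polynomial moments with an explicit rate in $p$, uniform in $N$ and $t$, and then sum them into the stretched-exponential moment \eqref{ineq exponential}. Define $m_p^N(t):=\int_{\R^{3}}|v|^{2p}f_{N,1}(t,v)\,\d v$. Applying It\^o's formula to $|V_t^1|^{2p}$ along the SDE \eqref{kacparticles}, taking expectation, and using exchangeability yields
\begin{equation*}
\frac{\d}{\d t}m_p^N(t) = \frac{N-1}{N}\iint_{\R^{6}}Q_p(v,v_*)\,f_{N,2}(t,v,v_*)\,\d v\,\d v_*,
\end{equation*}
where the symmetric two-body kernel is
\begin{equation*}
Q_p(v,v_*):=B(v-v_*)\cdot(\nabla\phi_p(v)-\nabla\phi_p(v_*)) + \tfrac{1}{2}A(v-v_*):(D^2\phi_p(v)+D^2\phi_p(v_*)),\qquad \phi_p(v)=|v|^{2p}.
\end{equation*}
Cross-Hessian contributions vanish since $H=\sum_i\phi_p(v^i)$ is a sum of single-particle functions; the identity $Q_1\equiv 0$ reproduces the a.s.\ energy conservation \eqref{eq conservation} and yields $m_1^N(t)\le r_0^2$.

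The main technical step is a sharper Povzner-type pointwise bound on $Q_p$. Substituting $A(z)=|z|^{\gamma+2}\Pi(z)$, $B(z)=-2z|z|^\gamma$, $\nabla\phi_p(v)=2p|v|^{2p-2}v$, $D^2\phi_p(v)=2p|v|^{2p-2}\Id+4p(p-1)|v|^{2p-4}v\otimes v$ and using $\Pi(z)z=0$, direct expansion produces a decomposition in which the naive $O(p^2)$ contribution from the Hessian piece is nearly cancelled against a matching $O(p^2)$ contribution from the drift $B\cdot(\nabla\phi_p(v)-\nabla\phi_p(v_*))$, the remainder being $O(p)$. Concretely, setting $\hat z=(v-v_*)/|v-v_*|$ and using $p^2|v|^2-p(p-1)(v\cdot\hat z)^2\ge p|v|^2$ (from $|v\cdot\hat z|\le|v|$) and its counterpart for $v_*$, the expansion yields
\begin{equation*}
Q_p(v,v_*)\le-\kappa_1\,p\,|v-v_*|^{\gamma+2}(|v|^{2p-2}+|v_*|^{2p-2}) + \kappa_2\,p\,|v-v_*|^{\gamma+2}\,\Psi_p(v,v_*),
\end{equation*}
where $\Psi_p$ is a polynomial of strictly lower degree in $|v|,|v_*|$ than $|v|^{2p-2}+|v_*|^{2p-2}$, with coefficients growing at most \emph{linearly} in $p$.

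To close the hierarchy, I integrate the pointwise bound against $f_{N,2}$, collapse two-body terms via Young's inequality $|v|^a|v_*|^b\le\tfrac{a}{a+b}|v|^{a+b}+\tfrac{b}{a+b}|v_*|^{a+b}$ and exchangeability, and use $m_1^N\le r_0^2$ to obtain
\begin{equation*}
\frac{\d}{\d t}m_p^N\le-\tilde\kappa_1\,p\,m_{p+\gamma/2}^N + \tilde\kappa_2\,p\,P_p(m_{p-1}^N,\ldots,m_0^N),
\end{equation*}
where $P_p$ is a polynomial in lower moments. Jensen's inequality $m_{p+\gamma/2}^N\ge(m_p^N)^{(p+\gamma/2)/p}$ (using $m_0^N=1$) combined with induction on $p\in\N$ produces the stationary super-solution $m_p^N(t)\le(C_{r_0,\gamma}\,p)^{p(\gamma+2)/2}$, uniformly in $N$ and $t\ge 0$. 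Expanding $\exp(\xi|v|^{4/(\gamma+2)})=\sum_{k\ge 0}\xi^k|v|^{4k/(\gamma+2)}/k!$, integrating termwise against $f_{N,1}$, and applying Stirling to $(Cp)^{p(\gamma+2)/2}/k!$ (with $p=2k/(\gamma+2)$) reveals that the resulting series behaves like a geometric series in $\xi$, convergent for $\xi=\xi_{r_0,\gamma}$ sufficiently small. The main obstacle is the Povzner step: any estimate leaving an $O(p^2)$ coefficient in $\Psi_p$ would force the exponential-moment exponent down to $2/(\gamma+2)$; extracting the precise cancellation that yields the $O(p)$ behaviour is what unlocks the sharp exponent $4/(\gamma+2)$.
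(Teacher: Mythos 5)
Your overall strategy---sharpened Povzner bound $\Rightarrow$ polynomial-moment ODI $\Rightarrow$ stationary super-solution $\Rightarrow$ Taylor expansion plus Stirling---matches the paper's plan, and your final moment claim $m_p^N\le(C_{r_0,\gamma}p)^{p(\gamma+2)/2}$ is consistent with Proposition \ref{prop polynomial}. However, the central Povzner step you describe contains a genuine algebraic error, and the rationale you give for why it is needed is also not correct.

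You claim that the $O(p^2)$ contribution from $D^2\phi_p$ is ``nearly cancelled against a matching $O(p^2)$ contribution from the drift $B\cdot(\nabla\phi_p(v)-\nabla\phi_p(v_*))$'', leaving an $O(p)$ remainder. But $\nabla\phi_p(v)=2p|v|^{2p-2}v$ is only $O(p)$, so the drift contributes at most $O(p)$; there is nothing in the drift to cancel the $4p(p-1)|v|^{2p-4}v\otimes v$ piece of the Hessian. Carrying out the expansion of $Q_p$ carefully, the $p(p-1)$ contribution is
\[
2p(p-1)\,|v-v_*|^{\gamma}\bigl(|v|^{2p-4}+|v_*|^{2p-4}\bigr)\bigl(|v|^2|v_*|^2-(v\cdot v_*)^2\bigr)\,\ge 0,
\]
which is nonnegative and \emph{adds} to the upper bound rather than cancelling; the inequality $p^2|v|^2-p(p-1)(v\cdot\hat z)^2\ge p|v|^2$ you invoke is a lower bound, which is the wrong direction for the estimate you need. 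This is exactly what the paper records in \eqref{eq p}: there is a positive term with coefficient $p^2/2$, a negative term with coefficient $p(p-2)/2$ (which is simply dropped), and a negative term with coefficient $p$. The Povzner-type Lemma \ref{lemma Povzner} never drives the coefficient of the subleading term down to $O(p)$. On the contrary, after factoring one power of $p$ out, the coefficient of the lower-order term $x^{p-2+\gamma}y^2+y^{p-2+\gamma}x^2$ is $p^{1+\gamma/2}$, so that in the closed ODI \eqref{ineq original power} the final coefficient is $2r_0^2\,p^{2+\gamma/2}$---which for $\gamma\in(0,1]$ is actually \emph{larger} than $p^2$. What the sharpened Povzner inequality really buys is a gain in the \emph{exponents}, namely $x^{p-2+\gamma}y^2$ in place of the naive $x^{p-1}y$: the $``+\gamma"$ in the exponent, together with the balancing negative term $-\tfrac12 x^{p+\gamma}$, is what makes the ODI close at the right scale. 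Consequently, your closing remark that ``any estimate leaving an $O(p^2)$ coefficient in $\Psi_p$ would force the exponential-moment exponent down to $2/(\gamma+2)$'' is not true: the paper's ODI retains a $p^{2+\gamma/2}$ coefficient yet still yields the exponent $4/(\gamma+2)$.

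One further caveat: when you ``collapse two-body terms via Young's inequality and use $m_1^N\le r_0^2$'', you lose the essential structure. The paper converts $\E\bigl[|V^1|^{p-2+\gamma}|V^2|^2\bigr]$ into $\E\bigl[|V^1|^{p-2+\gamma}\tfrac1N\sum_i|V^i_t|^2\bigr]$ by exchangeability and then uses the \emph{almost-sure} conservation \eqref{eq conservation} to replace the bracket by $\tfrac1N\sum_i|V^i_0|^2\le r_0^2$ pointwise, obtaining $r_0^2\,\E[|V^1|^{p-2+\gamma}]$. The scalar bound $m_1^N\le r_0^2$ on the marginal moment is strictly weaker and does not licence this pointwise replacement; and the pointwise Young inequality $|v|^a|v_*|^b\le\tfrac{a}{a+b}|v|^{a+b}+\tfrac{b}{a+b}|v_*|^{a+b}$ would upgrade the exponent from $p-2+\gamma$ to $p+\gamma$, destroying the favourable decrement. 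So the proposal as written has two gaps: the asserted $O(p^2)\to O(p)$ cancellation is false, and the reduction of two-body moments to one-body moments is carried out less sharply than is required. The stationary super-solution and Taylor/Stirling steps are fine in outline, but they need to be run against the actual ODI with the $p^{2+\gamma/2}$ coefficient and the a.s.\ conservation trick.
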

We refer to Definition \ref{def fN} for the precise notion of weak solutions to the Liouville equation \eqref{eq Liouville}, and to Proposition \ref{prop well-posedness of fN} for the corresponding well-posedness result.

The first rigorous derivation of the Landau equation from many-particles is obtained by \cite{fontbona2009measurability}, where the authors obtained the convergence for \textit{Maxwellian molecules} (corresponding to $\gamma=0$ in \eqref{def AB}) via optimal transport techniques. The result was later generalised in \cite{fournier2009particle} with an improved convergence rate by a coupling method. The propagation of chaos result for the Landau equation with \textit{moderately soft potentials} is shown in \cite{fournier2016propagation}. There, the authors obtained convergence rates for the case $\gamma\in(-1,0)$ by coupling method, and  qualitative convergence for $\gamma\in(-2,-1]$ by the martingale approach. More recently, the relative entropy method has been applied to derive the Landau-like equations (with a uniformly positive-definite matrix $A$) in \cite{carrillo2024mean}. A quantitative relative entropy bound was later obtained in \cite{carrillo2024relative} for the Landau equation for Maxwellian molecules. Note that all of the above results concern \textit{non-conservative} particle models, in which total momentum and energy are conserved only in expectation, but not almost surely.

Unlike non-conservative particles, the so called \textit{Kac particles}, first proposed in \cite{kac1956foundations}, conserve total momentum and energy almost surely. This model was first used for the Landau equation in \cite{Miot2011kac}. In \cite{carrapatoso2016propagation}, the author constructed the corresponding SDE systems for Kac particles \eqref{kacparticles} and established the quantitative and uniform-in-time propagation of
chaos for the Landau equation with Maxwellian molecules. Later,  \cite{fournier2017kac} extended the analysis to hard potentials, where the authors showed the well-posedness of Kac particles \eqref{kacparticles} for $\gamma\in[0,1]$ and proved the quantitative propagation of chaos by coupling method. For soft potentials $\gamma\in[-3,0)$ especially the \textit{Coulomb} case $\gamma=-3$, compactness results for Kac particles  was obtained in \cite{carrillo2025fisher}, where the authors showed the BBGKY hierarchy converges to the infinite Landau hierarchy strongly in $L^1$. Very recently, the remaining difficulty in treating \textit{very soft potentials} has been overcome in \cite{tabary2025propagation} and \cite{feng2025kac}. The former proves propagation of chaos for $\gamma\in(-3,-2]$ via a compactness argument, while the latter resolves the Coulomb case by the duality method. In recent years, there is a noticeable advances in derivation of  the space-homogenous Landau equation under the mean-field scaling, which covers the full range $\gamma \in [-3, 1]$.

In this work, we revisit the mean-field derivation of Landau equation with hard potentials by BBGKY hierarchy method, where we not only derive the infinite hierarchy as \cite{Miot2011kac,carrillo2025fisher}, but also show that only tensorised solutions are allowed. It is the first time that BBGKY hierarchy method has been implemented to prove the propagation of chaos for the Landau equation. 

In order to show propagation of chaos, the first step is to get the uniform estimates of the Liouville equation associated with Kac particle system, which allow us to pass to the limit from the BBGKY hierarchy \eqref{eq BBGKY} to the corresponding infinite Landau hierarchy \eqref{eq Landau hierarchy}.
\begin{theorem}[Existence]
\label{thm existence}
Let $m\in\N$ and $T>0$ be fixed. There exists a subsequence of the weak solutions $\{f_{N',m}\}_{N'\in\N}$ of the BBGKY hierarchy \eqref{eq BBGKY} converging to a weak solution $f_m$ of the Landau hierarchy \eqref{eq Landau hierarchy} defined in Definition \ref{def Landau hierarchy}, such that
$$
f_{N',m}\to f_m\quad\text{in}\quad L^1(0,T;\P(\R^{3m})).
$$
\end{theorem}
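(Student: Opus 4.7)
The plan is a compactness argument: first establish uniform-in-$N$ tightness and time-equicontinuity of the marginals $\{f_{N,m}\}_{N}$, then extract a subsequential limit by Arzel\`a--Ascoli together with a diagonal extraction in $m$, and finally pass to the limit in the weak form of \eqref{eq BBGKY}, using Theorem \ref{thm exponential} to tame the super-linear growth of the kernel $A$.

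\textbf{Tightness and time-equicontinuity.} By exchangeability, the uniform exponential moment bound \eqref{ineq exponential} on $f_{N,1}$ transfers to each coordinate of any marginal $f_{N,m}$, yielding tightness of $\{f_{N,m}(t)\}$ in $\P(\R^{3m})$ uniformly in $N$ and $t\in[0,T]$. For time-equicontinuity I would test \eqref{eq BBGKY} against $\vphi\in C_c^\infty(\R^{3m})$ and integrate by parts twice, so that all velocity derivatives fall on $\vphi$ while $A$ itself is preserved. The resulting integrands are pointwise controlled by $|A(v^i-v^j)|\,|D^2\vphi|$ (self-interaction) or $|A(v^i-v^{m+1})|\,|D^2\vphi|$ (collision), together with lower-order terms involving $|\div A(z)|\lesssim|z|^{\gamma+1}$. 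Since $|A(z)|\lesssim|z|^{\gamma+2}$ and $\vphi$ is compactly supported in the first $m$ variables, the exponential moment bound integrates these uniformly in $N$ and $\tau$, yielding $|\langle\vphi,f_{N,m}(t)-f_{N,m}(s)\rangle|\leq C_\vphi|t-s|$ with $C_\vphi$ independent of $N$.

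\textbf{Subsequential limit and identification.} Arzel\`a--Ascoli in $C([0,T];\P(\R^{3m}))$ (equipped with a metric inducing narrow convergence) followed by a diagonal extraction in $m$ produces a subsequence $N'\to\infty$ and a family $\{f_m\}_{m\in\N}$ with $f_{N',m}\to f_m$ uniformly on $[0,T]$; the uniform polynomial moments then upgrade this to convergence in $L^1(0,T;\P(\R^{3m}))$ in any fixed Wasserstein metric. In the weak identity the self-interaction term is $O(m^2/N)$ and vanishes, while the prefactor $(N-m)/N\to 1$. To pass to the limit in the remaining collision integral, I would split with a cutoff $\chi_R(v^{m+1})$: on $\{|v^{m+1}|\leq R\}$ the integrand is a bounded continuous function of all $v$-variables, so narrow convergence of $f_{N',m+1}\to f_{m+1}$ suffices; the tail $\{|v^{m+1}|>R\}$ is made arbitrarily small, uniformly in $N'$ and $\tau$, by \eqref{ineq exponential} applied to the last marginal of $f_{N',m+1}$. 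Letting first $N'\to\infty$ and then $R\to\infty$ identifies $f_m$ as a weak solution of \eqref{eq Landau hierarchy}, with the correct initial datum $(f^0)^{\otimes m}$ inherited from $f_{N,m}(0)$.

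\textbf{Main obstacle.} The principal difficulty is handling the unbounded growth $|v|^{\gamma+2}$ of the kernel $A$ in the collision term when passing from $f_{N',m+1}$ to $f_{m+1}$: narrow convergence of marginals is insufficient on its own, and the polynomial moments available in prior Kac-particle literature such as \cite{fournier2017kac} would not trap the $v^{m+1}$-tail uniformly in $N'$ once the kinetic energy per particle is merely conserved. This is precisely where the exponential moment propagation provided by Theorem \ref{thm exponential}, made possible by the sharper Povzner-type inequality proved in this paper, is indispensable. A secondary but routine point is verifying the consistency relation $f_m=\int f_{m+1}\,\d v^{m+1}$ for the limit family, which is preserved under the narrow limits and the diagonal extraction above.
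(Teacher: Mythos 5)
Your proposal is correct but follows a genuinely different compactness route from the paper's. The paper does not use tightness and time-equicontinuity with Arzel\`a--Ascoli at all: instead it establishes a uniform-in-$N$ entropy bound on $f_{N,m}$ via sub-additivity of the entropy of $f_N$, combines it with the uniform second-moment bound coming from almost-sure energy conservation, and invokes the Dunford--Pettis criterion to extract a weakly-$L^1$ convergent subsequence on $[0,T]\times\R^{3m}$. The entropy assumption on $f^0$ in Assumption~\ref{assumption hard kac} exists precisely to feed this step, whereas in your scheme it plays no role. Your route buys something the paper glosses over: when passing to the limit in the collision term, the test integrand $A(v^i-v^{m+1}):\nabla^2_{v^iv^i}\vphi_m$ is \emph{not} in $L^\infty$ of the variable $v^{m+1}$ (it grows like $|v^{m+1}|^{\gamma+2}$), so the paper's appeal to weak-$L^1$ convergence against $L^\infty$ test functions is strictly speaking insufficient; your explicit $R$-cutoff followed by the uniform exponential-moment tail bound is exactly the repair needed. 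On the other hand, your approach requires verifying narrow equicontinuity in time, which the Dunford--Pettis route sidesteps by working with the time-integrated weak topology directly. Both proofs rely on the implicit diagonal extraction across $m$ (needed because the $m$-th weak identity involves $f_{N',m+1}$); you state this explicitly, the paper leaves it tacit. One small caution: in your equicontinuity step you speak of integrating by parts to put all derivatives on $\vphi$ and then estimate lower-order terms involving $\div A$, but the weak formulation \eqref{eq weak BBGKY} is already written with all derivatives on the test function and with the $B$-terms (which are exactly $\div A$) displayed explicitly, so that step is unnecessary bookkeeping rather than an error.
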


The second step is to control the Wasserstein distance of two solutions of the infinite Landau hierarchy.  
\begin{theorem}[Stability estimate]\label{thm uniqueness}
Let $m\in\N$ be fixed. Suppose the initial data are tensorised as $f_m(0)=(f^0)^{\otimes m}$ and $\t f_m(0)=(\t f^0)^{\otimes m}$, where $f^0$ and $\t f^0$ are under Assumption \ref{assumption hard kac}. Then, for any $T>0$, the corresponding weak solutions $f_m(T)$ and $\t f_m(T)$ of the Landau 
hierarchy \eqref{eq Landau hierarchy} satisfy the following stability estimate:
\begin{equation}\label{ineq stability}
\W_2\big(f_m(T),\t f_m(T)\big)\leq C(r_0,\gamma)\sqrt{m(1+T)} \left(\W_2\big(f^0,\t f^0\big)\right)^{1-\eta},
\end{equation}
for any $\eta\in(0,1)$, where the constant $C(r_0,\gamma)$ is independent of $T$ and $m$.
\end{theorem}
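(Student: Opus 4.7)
The plan is to adapt the SDE coupling method of Lacker \cite{lacker2023hierarchies}, combined with the truncation strategy used in \cite{fournier2017kac} for Kac particles with hard potentials, to the infinite Landau hierarchy. The key idea is to derive a hierarchy of differential inequalities for the Wasserstein distances between the marginals $f_m(t)$ and $\t f_m(t)$, and then iterate the hierarchy, using the uniform exponential moment bound of Theorem~\ref{thm exponential} to control the cut-off error that arises from the polynomial weights in $A$ and $B$.

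To each weak solution $f_m$ of the infinite Landau hierarchy obtained as a limit in Theorem~\ref{thm existence}, I would associate an exchangeable process $(V_t^1,\dots,V_t^m)$ with joint law $f_m(t)$, driven by a McKean-type SDE whose drift and diffusion are averaged against the conditional law of an $(m+1)$-th ``virtual'' variable extracted from $f_{m+1}$. I would build an analogous process $(\t V_t^i)_{i\le m}$ for $\t f_m$ on the same probability space, driven by the same underlying white noise and starting from a coordinate-wise optimal $\W_2$-coupling between $f^0$ and $\t f^0$; by exchangeability and the tensorised initial data, $\E|V_0^i-\t V_0^i|^2=\W_2^2(f^0,\t f^0)$ uniformly in $i$. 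I would then smoothly truncate $B$ and $\sigma$ on $\{|z|\le R\}$, so that their Lipschitz constants are bounded by $CR^\gamma$ and $CR^{\gamma/2}$ respectively.

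Applying Ito's formula to $|V_t^i-\t V_t^i|^2$, taking expectation, and combining the Lipschitz bounds with a Wasserstein estimate between the conditional laws of $V^{m+1}$ and $\t V^{m+1}$ yields, for $D_m(t):=\sup_{i\le m}\E|V_t^i-\t V_t^i|^2$,
\begin{equation*}
\frac{d}{dt}D_m(t)\le CR^\gamma\bigl(D_m(t)+D_{m+1}(t)\bigr)+\varepsilon(R),\qquad \varepsilon(R)\le Ce^{-c\,\xi_{r_0,\gamma}R^{4/(\gamma+2)}},
\end{equation*}
where the bound on $\varepsilon(R)$ follows from a Chebyshev estimate together with Theorem~\ref{thm exponential}. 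Iterating this inequality $K$ times produces
\begin{equation*}
D_m(T)\le e^{CR^\gamma T}\sum_{k=0}^{K-1}\frac{(CR^\gamma T)^k}{k!}D_{m+k}(0)+e^{CR^\gamma T}\frac{(CR^\gamma T)^K}{K!}\sup_{t\le T}D_{m+K}(t)+CTK\,\varepsilon(R)\,e^{CR^\gamma T}.
\end{equation*}
Using $D_{m+k}(0)=\W_2^2(f^0,\t f^0)$ and the uniform bound $\sup_{m,t}D_m(t)\le C$ from Theorem~\ref{thm exponential}, I would tune $R^\gamma(1+T)\sim\eta\log(1/\W_2(f^0,\t f^0))$ and $K$ of order $R^\gamma(1+T)$ to dominate the iteration tail; the decisive point is that $R^{4/(\gamma+2)}$ dominates $R^\gamma$ for $\gamma\in(0,1]$, so the cut-off error decays faster than any power of $\W_2(f^0,\t f^0)$. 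Combining with $\W_2^2(f_m,\t f_m)\le m\,D_m(T)$ and taking a square root recovers the claimed estimate with the $\sqrt{m(1+T)}$ prefactor.

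The main obstacle is the simultaneous balance between the Lipschitz scale $R^\gamma$, the iteration depth $K$, and the cut-off error $e^{-cR^{4/(\gamma+2)}}$. The uniform-in-time exponential moment bound of Theorem~\ref{thm exponential} is indispensable here: only because the tail exponent $4/(\gamma+2)$ strictly exceeds $\gamma$ in the range $\gamma\in(0,1]$ can one absorb the exponential Gronwall factor $e^{CR^\gamma T}$ at the price of a polynomial loss, rather than the logarithmic rate that merely polynomial moments would force. The arbitrarily small $\eta>0$ in \eqref{ineq stability} reflects the polylogarithmic factors absorbed in this optimization.
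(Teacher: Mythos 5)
Your proposal follows essentially the same route as the paper: synchronous SDE coupling, a truncation at relative-velocity scale $R$ with Lipschitz constant $a\sim R^\gamma$, control of the cut-off error via Chebyshev plus Theorem~\ref{thm exponential} (with the decisive comparison $4/(\gamma+2)>\gamma$ for $\gamma\in(0,1]$), and iteration of the resulting differential-inequality hierarchy in the spirit of \cite{lacker2023hierarchies}. Two remarks on the details, both of which matter if you carry this out. First, the SDE system should \emph{not} be a closed McKean-type system whose coefficients are averaged against a conditional law: since $\bigl(\int\sigma\,f\bigr)\bigl(\int\sigma\,f\bigr)^T\neq\int A\,f$ in general, averaging $\sigma$ produces the wrong generator and the process would not have marginals solving the Landau hierarchy. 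The paper instead truncates the hierarchy at a level $n$, couples $(U^1,\ldots,U^n)$ to a shared ``virtual'' process $U^{n+1}$ so that $\law(U^1,\ldots,U^{n+1})=f_{n+1}$, and realises the SDE representation a posteriori via a martingale problem (Remark~\ref{rmk martingale}); the iteration depth $K$ in your scheme plays the role of $n-m$. Second, the paper records the sharper consistency inequality $\E\bigl[|U_t^{n+1}-\t U_t^{n+1}|^2\bigr]=\W_2^2(f_1(t),\t f_1(t))\le u_{m+1}^{(n)}(t)-u_m^{(n)}(t)$, which inserts a damping term $-(m-1)u_m^{(n)}$ into the hierarchy inequality and lets the combinatorial functions $F_m^\ell,G_m^\ell$ of \cite{lacker2023hierarchies} be invoked verbatim (Lemma~\ref{lemma combi}); your coarser inequality $\frac{d}{dt}D_m\le a(D_m+D_{m+1})+\varepsilon(R)$ also closes after re-deriving the tail bound by hand, but the paper's form avoids that.
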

This stability estimate relies heavily on the exponential moments bound we obtained and implies the uniqueness of weak solutions with exponential moment bound, which allows us to complete the propagation of chaos result via the BBGKY hierarchy method. To the best of our knowledge, this work provides the first rigorous uniqueness result for the Landau hierarchy. Note that the uniqueness of infinite Landau hierarchy is a much stronger result than the uniqueness of the Landau equation itself. 

We recall the existence and exponential moment estimates for weak solutions of the Landau equation with hard potentials \eqref{eq Landau}:
\begin{proposition}[{\cite[Proposition 5]{fournier2017kac}}]
Under Assumption \ref{assumption hard kac}, there exists a weak solution of \eqref{eq Landau} such that for any $\vphi\in C_b^2(\R^3)$, it holds
\begin{equation}\label{eq weak Landau}
\begin{aligned}
\int_{\R^{3}}\&\vphi(v^1) f(T,v^1)\d v^1-\int_{\R^{3}}\vphi(v^1) f^0(v^1)\d v^1\\=\&\int_0^T\int_{\R^{6}}A(v^1-v^2):\nabla_{v^1v^1}^2\vphi(v^1)f(t,v^1)f(t,v^2)\d v^1\d  v^2\d t\\\&+2\int_0^T\int_{\R^{6}}B(v^1-v^2)\cdot \nabla_{v^1}\vphi(v^1)f(t,v^1)f(t,v^2)\d v^1\d v^2\d t.
\end{aligned}
\end{equation}
Moreover, for any $\beta\in(0,2)$ and constant $\xi_0>0$, weak solutions propagate the exponential moment bound
\begin{equation}\label{ineq exponential Landau}
\sup_{t\in[0,\infty)}\int_{\R^3}\exp(\xi_0|v|^\beta)f(t,v)\d v<\infty.    
\end{equation}
\end{proposition}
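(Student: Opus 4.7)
The plan is to construct weak solutions by a regularization-and-compactness scheme, and then to propagate exponential moments by testing the weak formulation against $|v|^{2p}$, establishing a symmetrized Povzner-type estimate, and summing the resulting moment hierarchy with weights calibrated to $\beta<2$.

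\emph{Existence.} Regularize the kernels $A$ and $B$ so that their growth at infinity is bounded (e.g.\ replace $|z|^{\gamma}$ by $|z|^{\gamma}\wedge K$ and mollify near $z=0$); classical parabolic theory then yields global smooth positive solutions $f^{K}$ of the regularized equation. The divergence form gives exact conservation of mass, momentum and energy; the entropy dissipation inequality gives a uniform $L\log L$ bound; and testing against $(1+|v|^{2})^{p}$ together with a standard Povzner estimate propagates polynomial moments uniformly in $K$. A tightness argument in $C([0,T];\P(\R^{3}))$, combined with these bounds, permits extraction of a convergent subsequence $f^{K'}\to f$; the polynomial growth of $A$, $B$ and the uniform moment bounds then allow passage to the limit in every term of \eqref{eq weak Landau}.

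\emph{Exponential moment propagation.} Let $M_{2p}(t):=\int_{\R^{3}}|v|^{2p}f(t,v)\,\d v$. Testing \eqref{eq weak Landau} against $\vphi_{p}(v)=|v|^{2p}$ (justified via truncation) and using the identities $\operatorname{tr}(A(z))=2|z|^{\gamma+2}$ and $\Pi(z)z=0$, one derives
\begin{equation*}
\frac{\d}{\d t}M_{2p}(t) = \iint_{\R^{6}}\mathcal{K}_{p}(v,v_{*})\,f(t,v)f(t,v_{*})\,\d v\,\d v_{*}.
\end{equation*}
Symmetrizing in $(v,v_{*})$ cancels the top-order divergent contributions and yields a Povzner-type bound of the schematic form
\begin{equation*}
\mathcal{K}_{p}(v,v_{*})+\mathcal{K}_{p}(v_{*},v) \leq -c_{1}p\bigl(|v|^{2p+\gamma}+|v_{*}|^{2p+\gamma}\bigr) + c_{2}p^{2}\sum_{k=1}^{p-1}\binom{p}{k}|v|^{2k}|v_{*}|^{2(p-k)+\gamma} + \text{l.o.t.},
\end{equation*}
which closes a differential inequality for $M_{2p}$ involving $M_{2p+\gamma}$ and products $M_{2k}M_{2(p-k)+\gamma}$. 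Crucially, interpolation against the conserved mass gives $M_{2p+\gamma}\gtrsim M_{2p}^{\,1+\gamma/(2p)}$, so the loss term is strongly damping for large $p$. Forming the generating function
\begin{equation*}
\Phi_{\xi}(t) := \sum_{p\geq 0}\frac{\xi^{p}}{(p!)^{2/\beta}}\,M_{2p}(t),\qquad \beta\in(0,2),
\end{equation*}
the exponent $2/\beta>1$ is exactly what makes the convolution sum generated by the cross term summable (since $\binom{p}{k}/(p!)^{2/\beta}=(p!)^{1-2/\beta}/(k!(p-k)!)$ decays in $p$). A continuation argument then yields $\sup_{t\geq 0}\Phi_{\xi}(t)<\infty$; since $f^{0}$ is compactly supported so that $\Phi_{\xi}(0)<\infty$ for every $\xi$, the bound \eqref{ineq exponential Landau} follows for every $\xi_{0}>0$ and $\beta\in(0,2)$.

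\emph{Main obstacle.} The delicate step is the Povzner inequality: one must track the constants in the symmetrized kernel sharply enough that the loss $-c_{1}pM_{2p+\gamma}$ dominates the gain $c_{2}p^{2}$-convolution after weighted summation. The threshold $\beta<2$ is structural rather than technical: at $\beta=2$ with weights $\xi^{p}/p!$, the combinatorial factor $p^{2}\binom{p}{k}$ swamps the dissipation, reflecting the fact that the Laplacian-like diffusion of the Landau operator can generate at best Gaussian-type tails.
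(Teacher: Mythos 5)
This proposition is cited by the paper from Fournier–Gu\'erin \cite[Proposition~5]{fournier2017kac}; the paper provides no proof of its own. Your proposal is therefore a blind reconstruction of a result from the literature, and as such it follows the standard Bobylev-style route: regularize to get classical solutions, propagate polynomial moments uniformly via a Povzner inequality, extract a limit by compactness, and then sum the moment hierarchy through a suitably weighted generating function. This is consistent in spirit with the cited reference and with the established machinery for exponential moments (Bobylev, Alonso–Ca\~nizo–Gamba–Mouhot, Fournier–Gu\'erin).

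Two points deserve more care, since they are where the actual work lies. First, the generating-function weight $(p!)^{-2/\beta}$ is the right calibration (and your observation that $\binom{p}{k}/(p!)^{2/\beta}\le (k!(p-k)!)^{-2/\beta}$ when $2/\beta>1$ is exactly the summability mechanism), but the step ``a continuation argument then yields $\sup_t\Phi_\xi(t)<\infty$'' compresses the main difficulty: one must show that the dissipative term, after Jensen against conserved mass, produces a closed differential inequality for $\Phi_\xi$ of the form $\Phi_\xi'\le -c\,\Psi(\Phi_\xi)+C$, uniformly over the truncation of the $p$-sum, and this requires keeping explicit track of the $p$-dependence in the Povzner constants (precisely the issue Lemma~2.4 of the paper confronts in the particle setting). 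Your schematic Povzner bound is not derived, and whether the loss $-c_1 p M_{2p+\gamma}$ genuinely dominates the $c_2 p^2\binom{p}{k}$-weighted gain at the level of the generating function is a quantitative claim, not a structural one. Second, the paper only uses the weaker exponent $\beta=4/(\gamma+2)<2$ in Theorem~\ref{thm exponential} (for the particle marginal), which is what the coupling argument in Section~\ref{sec uniqueness} actually requires; the full range $\beta<2$ stated here is specific to the one-body PDE. So the structural claim at the end of your proposal (that $\beta=2$ is the sharp threshold reflecting Gaussian-type tails) is correct, but the paper never relies on the sharp range.

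Since the paper does not prove this statement, the comparison is necessarily against the external reference; your outline is a plausible reconstruction of that reference's strategy, with the Povzner constant-tracking and the continuation argument being the parts that would need to be filled in to constitute a complete proof.
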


We can see that the tensorisation of the Landau equation \eqref{eq Landau} actually gives a weak solution of the Landau hierarchy \eqref{eq Landau hierarchy} defined in Definition \ref{def Landau hierarchy}. That is to say, for any $m\in\N$, $f^{\otimes m}(V_m)=f(v^1)\cdots f(v^m)$ and $f^{\otimes (m+1)} (V_{m+1})=f(v^1)\cdots f(v^{m+1})$ with tensorised initial data solve the $m$-th equations in the hierarchy. We deduce from Theorem \ref{thm existence} that if the weak solution of the Landau hierarchy \eqref{eq Landau hierarchy} obtained from the BBGKY hierarchy and the tensorised weak solution given by the Landau equation \eqref{eq Landau} have the same tensorised initial data, then these two weak solutions coincide at any time $t>0$. It yields the following propagation of chaos from Theorem \ref{thm exponential} and Theorem \ref{thm existence}. 
\begin{theorem}[Propagation of Chaos]\label{thm poc}
 For any fixed $m\in\N$, the sequence $\{f_{N,m}\}_{N\in\N}$, which are $m$-th marginals of weak solutions of the Liouville equation 
\eqref{eq Liouville}, converges to the unique weak solution $f_m$ of the Landau hierarchy \eqref{eq Landau hierarchy}, which has the tensorised form such as $f_m=f^{\otimes m}$. In particular, the propagation of chaos holds, 
$$
f_{N,m}\to f^{\otimes m}\quad\text{in}\quad L^1(0,T;\P(\R^{3m}))\quad \text{as}\quad N\to\infty.
$$
\end{theorem}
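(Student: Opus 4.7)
The plan is to use the three previously stated theorems—exponential moments (Theorem \ref{thm exponential}), subsequential existence (Theorem \ref{thm existence}), and stability (Theorem \ref{thm uniqueness})—to identify every subsequential limit as the tensor power $f^{\otimes m}$ of a weak solution of the Landau equation, and then to promote subsequential convergence to full-sequence convergence via a standard Urysohn-type argument.

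First I would fix $m\in\N$ and $T>0$. By Theorem \ref{thm existence}, there exists a subsequence $f_{N',m}\to f_m$ in $L^1(0,T;\P(\R^{3m}))$ with $f_m$ a weak solution of the $m$-th equation of the Landau hierarchy \eqref{eq Landau hierarchy} starting from the tensorised datum $(f^0)^{\otimes m}$. The uniform bound of Theorem \ref{thm exponential} is preserved in the limit (Fatou, after integrating the exchangeable marginal against the exponential weight), so the first marginal of $f_m$ still satisfies \eqref{ineq exponential}, which places $f_m$ in the functional class in which Theorem \ref{thm uniqueness} operates.

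Next I would identify the tensorised candidate. By the Fournier--Guérin proposition cited just before Theorem \ref{thm poc}, there is a weak solution $f$ of the Landau equation \eqref{eq Landau} starting from $f^0$ and propagating the exponential moment \eqref{ineq exponential Landau}. Testing the weak formulation \eqref{eq weak Landau} against tensorised test functions and using a Leibniz expansion of $\p_t\bigl(f(v^1)\cdots f(v^m)\bigr)$, one checks directly that $f^{\otimes m}$ is a weak solution of the $m$-th equation of \eqref{eq Landau hierarchy} (in the sense of Definition \ref{def Landau hierarchy}) with the same initial datum $(f^0)^{\otimes m}$; the exponential moment \eqref{ineq exponential Landau} ensures again that it lies in the class to which Theorem \ref{thm uniqueness} applies. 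Then I would apply Theorem \ref{thm uniqueness} with $\t f^0=f^0$, both satisfying Assumption \ref{assumption hard kac}, to obtain
$$\W_2\bigl(f_m(T),f^{\otimes m}(T)\bigr)\leq C(r_0,\gamma)\sqrt{m(1+T)}\,\bigl(\W_2(f^0,f^0)\bigr)^{1-\eta}=0,$$
hence $f_m(T)=f^{\otimes m}(T)$ for every $T\geq 0$. Finally, since every subsequence of $\{f_{N,m}\}_{N\in\N}$ admits, by the same argument applied to that subsequence, a further sub-subsequence converging in $L^1(0,T;\P(\R^{3m}))$ to this same limit $f^{\otimes m}$, the entire sequence converges to $f^{\otimes m}$, which is exactly the propagation of chaos statement.

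The main obstacle is purely one of bookkeeping rather than analysis: one must verify that the notion of weak solution in which the stability estimate of Theorem \ref{thm uniqueness} is stated accommodates both the subsequential limit $f_m$ (arising from finite-$N$ marginals that are only known to be weak solutions of the BBGKY system \eqref{eq BBGKY}) and the explicit tensor product $f^{\otimes m}$ (arising from a weak solution of the scalar Landau equation \eqref{eq Landau}). This reduces to transferring the exponential moment bound \eqref{ineq exponential} through the $L^1$-limit for the former and invoking \eqref{ineq exponential Landau} for the latter, both of which are already in hand; together with the identification of tensorised initial data, the hypotheses of Theorem \ref{thm uniqueness} are met and the argument closes.
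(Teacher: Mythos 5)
Your proof is correct and follows the same route the paper takes (which is sketched in the paragraph immediately preceding the theorem statement rather than laid out as a formal proof): extract a subsequential limit via Theorem \ref{thm existence}, check that both the limit and the tensorised solution $f^{\otimes m}$ lie in the exponential-moment class required by Definition \ref{def Landau hierarchy}, and invoke the stability estimate of Theorem \ref{thm uniqueness} with $\t f^0=f^0$ to force $f_m=f^{\otimes m}$. The additional care you take—Fatou to transfer the exponential moment to the limit, Leibniz expansion to verify that $f^{\otimes m}$ solves \eqref{eq weak Landau hierarchy}, and the sub-subsequence (Urysohn) argument to upgrade from subsequential to full-sequence convergence—fills in exactly the steps the paper leaves implicit, and nothing is missing.
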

We conclude with some remarks on the assumptions imposed on the initial data.
\begin{remark}\label{rmk initial}
Under Assumption \ref{assumption hard kac}, the conservative particle system \eqref{kacparticles} is wellposed; see \cite{fournier2017kac} for the proof for both Maxwellian molecules and hard potentials, even under weaker assumptions.

The first condition in Assumption \ref{assumption hard kac}, namely, the compact support of the initial data, is stronger than those typically found in the literature. This condition is used to constrain the initial configuration of the particles by ensuring that, for any arbitrarily fixed $r_0>0$
$$
\frac{1}{N}\sum_{i=1}^N|V_0^i|^2\leq r_0^2\quad\text{a.s.}
$$
This bound is important for getting the moment estimate at the particle level. Note that it also implies the standard physical properties required of the initial data, such as finite mass, momentum and energy.

Unlike in \cite{carrillo2025fisher}, we do not require the initial data to have finite Fisher information. The finite entropy condition suffices to construct a weak solution of the Landau hierarchy \eqref{eq Landau hierarchy} with hard potentials.
\end{remark}

This work is organised as follows: In Section \ref{sec moment}, we show the crucial estimate, namely, the uniform-in-time and uniform-in-$N$ propagation of exponential moment for the Kac particles. Section \ref{sec existence} is devoted to proving the well-posedness of the BBGKY hierarchy \eqref{eq BBGKY} and the existence of weak solutions of the infinite Landau hierarchy \eqref{eq Landau hierarchy}. In Section \ref{sec uniqueness}, we obtain the stability estimate as well as the uniqueness of weak solutions of the infinite Landau hierarchy with tensorised initial data, which completes the BBGKY hierarchy method for showing propagation of chaos.

\section{Uniform-in-time exponential moment estimate}\label{sec moment}
In this section, we prove Theorem \ref{thm exponential}, which gives a bound on the exponential moment of the first marginal of the solution of the Liouville equation. The proof is based on a summation over polynomial moments, using a Taylor expansion argument to control the exponential moment. The key technical step involves proving uniform polynomial moment bounds, as stated Proposition \ref{prop polynomial}. The proof relies on a sharpened Povzner-type inequality, along with a decomposition of the time interval into short- and long-time regimes.  
\begin{proposition}\label{prop polynomial} 
Let $f_N$ be the weak solution of the Liouville equation \eqref{eq Liouville}, where $ f_N = \law(V^1,\ldots,V^N) $ corresponds to the particle system \eqref{kacparticles}, and assume the initial data satisfies Assumption \ref{assumption hard kac}. Then, for any $p\geq 4$, the $p$-moment estimate of the first marginal $f_{N,1}$ satisfies the following uniform-in-time and uniform-in-$N$ bound:
\begin{equation}\label{ineq polynomial}
\sup_{t\in[0,\infty)}\int_{\R^{3}}|v|^pf_{N,1}(t,v)\d v\leq \left(C_{r_0,\gamma}\right)^pp^{\frac{(2+\gamma)(p-2)}{4}},
\end{equation}
where the constant $C_{r_0,\gamma}$ is independent of $p$, $t$ and $N$.
\end{proposition}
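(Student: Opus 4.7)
The plan is to work at the SDE level. By exchangeability, $M_p^N(t):=\int_{\R^3}|v|^p f_{N,1}(t,v)\d v = \E[|V_t^1|^p] = \frac{1}{N}\sum_{i=1}^N \E[|V_t^i|^p]$. Applying It\^o's formula to $\varphi(V_t^i)=|V_t^i|^p$, summing over $i$, and symmetrising the resulting double sum using the anti-symmetry $Z^{j,i}=-Z^{i,j}$ together with $A(z)=A(-z)$ and $B(z)=-B(-z)$ yields a closed identity
$$
\frac{d}{dt}M_p^N(t)=\frac{1}{N^2}\sum_{i\neq j}\E\left[\Phi_p(V_t^i,V_t^j)\right],
$$
where $\Phi_p(v,w)=\frac{1}{2}(\nabla^2|v|^p+\nabla^2|w|^p):A(v-w)+(\nabla|v|^p-\nabla|w|^p)\cdot B(v-w)$.

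The heart of the argument is a sharpened Povzner-type inequality for $\Phi_p$. Using the explicit formulas $\nabla|v|^p=p|v|^{p-2}v$ and $\nabla^2|v|^p=p|v|^{p-2}\Id+p(p-2)|v|^{p-4}v\otimes v$, together with the orthogonality $\Pi(v-w)(v-w)=0$, an algebraic expansion decomposes $\Phi_p$ into a strictly negative dissipative contribution, coercive at the rate $|v|^{p+\gamma}+|w|^{p+\gamma}$ weighted by appropriate powers of the relative velocity, plus a positive source. The sharpness consists in isolating the cancellation between the gain coming from the second-order $A$-term and the dissipation coming from the $B$-term, in the regime where one of $|v|,|w|$ dominates the other, so that the source ends up with only a sharp polynomial-in-$p$ prefactor. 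It is precisely this sharp exponent that produces the power $p^{(\gamma+2)(p-2)/4}$ in the final bound.

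To close the argument, the compact support of $f^0$ together with the almost-sure conservation law \eqref{eq conservation} gives the a priori bound $M_2^N(t)\leq r_0^2$, uniform in $t$ and $N$. H\"older interpolation then converts the dissipation: $M_{p+\gamma}^N\geq r_0^{-2\gamma/(p-2)}(M_p^N)^{(p-2+\gamma)/(p-2)}$. After taking expectation in the Povzner inequality, the differential identity becomes an autonomous Bernoulli-type inequality
$$
\frac{d}{dt}M_p^N(t)\leq -c_\gamma\,p\,r_0^{-2\gamma/(p-2)}(M_p^N(t))^{1+\gamma/(p-2)} + S_p(t),
$$
with source $S_p(t)$ controlled by moments of strictly lower order. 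A standard barrier argument bounds $M_p^N(t)$ by the maximum of $M_p^N(0)\leq r_0^p$ and the stationary value $(S_p^\star/(c_\gamma p))^{(p-2)/(p-2+\gamma)}$. The short-time vs.\ long-time split announced in the introduction is the device of handling the initial layer (on which the barrier has not yet taken effect) directly from the compact support of $f^0$ and the SDE representation, and then handing over to the ODE barrier for $t\geq t_\ast$. An induction on integer $p\geq 4$, with base case provided by energy conservation, propagates the asserted growth; non-integer $p$ follows by H\"older.

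The main obstacle is proving the Povzner-type inequality with the sharp polynomial-in-$p$ source prefactor. A naive binomial expansion of $|v-w|^{\gamma+2}$ and of $(|v|^{p-2}v-|w|^{p-2}w)\cdot(v-w)$ delivers a source whose $p$-growth is too large by a full power of $p$, which breaks closure of the induction at the target exponent $(\gamma+2)(p-2)/4$. Tracking the cancellation between the $A$-gain and the $B$-dissipation in the anisotropic direction selected by $\Pi(v-w)$ is therefore the principal technical work.
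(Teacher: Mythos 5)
Your overall strategy (It\^o/Povzner on moments, conservation of kinetic energy, ODE closure) matches the paper's, but there are two concrete gaps that would stop the argument from producing the claimed exponent.

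First, you state only the weak consequence ``$M_2^N(t)\leq r_0^2$'' of the almost-sure conservation, but that is not how the conservation is actually used. After the Povzner-type estimate, the right-hand side involves \emph{joint} moments such as $\E[|V_t^1|^p|V_t^2|^\gamma]$ and $\E[|V_t^1|^{p-2+\gamma}|V_t^2|^2]$, and you never explain how these are converted into products of one-particle moments without a loss. H\"older on the joint moment gives either a constant of the wrong size or the same order $p+\gamma$ on both sides with no margin. The paper's crucial device is to rewrite $\E[|V_t^1|^p|V_t^2|^\gamma]\leq \E\big[|V_t^1|^p\big(\tfrac1N\sum_j|V_t^j|^2\big)^{\gamma/2}\big]$ and then use the \emph{pathwise} identity $\tfrac1N\sum_j|V_t^j|^2=\tfrac1N\sum_j|V_0^j|^2\leq r_0^2$ a.s.\ to pull out a deterministic factor $r_0^\gamma$; and similarly for the $|V_t^2|^2$ weight. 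This is strictly stronger than ``$M_2^N\leq r_0^2$'' and is what keeps the prefactors $p$-sharp. As written, your proposal does not close this step.

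Second, the proposed ``induction on integer $p\geq 4$'' is not what the paper does and is not needed. After the a.s.\ conservation step, all remaining moments are one-particle moments, and the paper interpolates $\E[|V_t^1|^{p-2+\gamma}]\leq (\E[|V_t^1|^p])^{(p-4+\gamma)/(p-2)}$ (via Jensen for the probability measure $|v|^2f_{N,1}/\!\int|v|^2f^0$) so that the differential inequality is autonomous in $h_t=\E[|V_t^1|^p]$ for each fixed $p$. The ODE $\tfrac{\d}{\d t}h_t\leq -p\,h_t^{1+\gamma/(p-2)}+2pr_0^\gamma h_t+2r_0^2p^{2+\gamma/2}h_t^{1-(2-\gamma)/(p-2)}$ is then analysed directly, with the short-time regime $t<p^{-\gamma(2+\gamma)/4}$ handled by integrating a bound on $\tfrac{\d}{\d t}h_t^\beta$ and the long-time regime by the ODE comparison lemma of \cite{fournier2021stability}. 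Your ``barrier at the stationary value'' is compatible with this in spirit, but an induction that feeds lower-order moments into $S_p$ introduces bookkeeping that the autonomous formulation avoids. Finally, a minor point on the Povzner step: in the paper the anisotropic contribution $A(v^i-v^j):(v^i\otimes v^i)$ produces a manifestly nonpositive term that is simply dropped, and the sharp prefactor $p^{1+\gamma/2}$ comes from a purely scalar case analysis on $x=|v^i|$, $y=|v^j|$ with threshold $\eps=1/\sqrt{2p}$, not from a cancellation ``in the direction selected by $\Pi(v-w)$''.
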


\begin{remark}[Sharpness of the exponent]\label{rmk sharp}
In the Maxwellian case, i.e., when $\gamma=0$, the propagation of polynomial moment was obtained in \cite{fournier2017kac} as 
\begin{equation}\label{ineq maxwellian}
\sup_{t\in[0,\infty)}\int_{\R^3}|v|^pf_{N,1}(t,v)\d v\leq\int_{\R^3}|v|^pf^0(t,v)\d v\leq  (2r_0^2p)^\frac{p}{2},    
\end{equation}
where the second inequality follows Assumption \ref{assumption hard kac}. In the case $\gamma\in(0,1]$, it is  showed in \cite{fournier2017kac} that 
$$
\sup_{t\in[0,\infty)}\int_{\R^3}|v|^pf_{N,1}(t,v)\d v\leq (C_pr_0^{p+\gamma})^\frac{p}{p+\gamma}.
$$
However, in order to obtain exponential moment estimates rather than polynomial  ones, the dependence of the constant on $p$ becomes crucial. In our proof, we make this dependence explicit by showing a sharper version of Povzner-type of inequality and carefully tracking the exponents. The exponent in the estimate \eqref{ineq polynomial} is believed to be almost sharp. This can be seen by taking $\gamma=0$ in \eqref{ineq polynomial}, which then recovers the estimate in the Maxwellian case \eqref{ineq maxwellian}, up to a lower order multiplicative factor.
\end{remark}

We postpone the proof of Proposition \ref{prop polynomial} until after showing the exponential estimate \eqref{ineq exponential}. The proof is as follows.

\begin{proof}[Proof of Theorem \ref{thm exponential}]
For some $\xi>0$ which will be determined later, we consider the exponential moment $\exp(\xi|v|^\frac{4}{2+\gamma})$ with $\gamma\in(0,1]$. By using the Taylor's expansion, it holds for any $t\geq 0$
$$
\begin{aligned}
\int_{\R^{3}}\exp\&(\xi|v^1|^\frac{4}{2+\gamma})f_{N,1}(t,v^1)\d v^1\\=\&\int_{\R^{3}}f_{N,1}(t,v^1)\d v^1+\xi\int_{\R^{3}}|v^1|^\frac{4}{2+\gamma}f_{N,1}(t,v^1)\d v^1+\frac{\xi^2}{2}\int_{\R^{3}}|v^1|^\frac{8}{2+\gamma}f_{N,1}(t,v^1)\d v^1\\\&+\sum_{n=3}^\infty\frac{\xi^n}{n!}\int_{\R^{3}}|v^1|^\frac{4n}{2+\gamma}f_{N,1}(t,v^1)\d v^1\\\leq\& C(\xi,\gamma)+\sum_{n=3}^\infty\frac{\xi^n}{n!}\left(C_{r_0,\gamma}\right)^\frac{4n}{2+\gamma}(\frac{4n}{2+\gamma})^{n-\frac{2+\gamma}{2}}\\\leq\&C(\xi,\gamma)+\sum_{n=3}^\infty\frac{1}{n!}\left[\xi\left(C_{r_0,\gamma}\right)^\frac{4}{2+\gamma}\frac{4n}{2+\gamma}\right]^{n},
\end{aligned}
$$
where we used the estimate \eqref{ineq polynomial} at the second last inequality with $p=\frac{4n}{2+\gamma}$, and the constant $C(\xi,\gamma)$ is independent of $t$. Applying the Stirling's formula, the last summation above is bounded as long as  
$$
0<\xi\left(C_{r_0,\gamma}\right)^\frac{4}{2+\gamma}\frac{4}{2+\gamma}<\frac{1}{e},\quad\text{i.e.,}\quad 0<\xi<\frac{2+\gamma}{4e\left(C_{r_0,\gamma}\right)^\frac{4}{2+\gamma}}.
$$
Therefore, there exits some $\xi=\xi_{r_0,\gamma}$ such that 
$$
\sup_{t\in[0,\infty)}\int_{\R^{3}}\exp(\xi_{r_0,\gamma}|v^1|^\frac{4}{2+\gamma})f_{N,1}(t,v^1)\d v^1<\infty.
$$
\end{proof}
The remaining of this section devotes to proof the polynomial moment estimate (Proposition \ref{prop polynomial}). We denote the Lebesgue measure on $\R^{3N}$ by $\d V_N := \d v^1 \d v^2 \cdots \d v^N$.

\begin{lemma}\label{lemma weak form}
Assume $f_N$ is the weak solution of the Liouville equation \eqref{eq Liouville} defined in Definition \ref{def fN}. Then, for any $p\geq 2$, the estimate holds
\begin{equation}\label{ineq integral p}
\begin{aligned}
\int_{\R^{3N}}\& f_N(t)\frac{1}{N}\sum_{i=1}^N|v^i|^p\d V_N-\int_{\R^{3N}} (f^0)^{\otimes N}\frac{1}{N}\sum_{i=1}^N|v^i|^p\d V_N\\
\leq\&\frac{p^2}{2N^2}\sum_{i\neq j}\int_0^t\int_{\R^{3N}}\Big[|v^i|^{p-2}|v^j|^{2}+|v^j|^{p-2}|v^i|^{2}\Big]|v^i-v^j|^{\gamma}f_N(s)\d V_N\d s\\
\&-\frac{p}{N^2}\sum_{i\neq j}\int_0^t\int_{\R^{3N}}\Big[|v^i|^p+|v^j|^p\Big]|v^i-v^j|^{\gamma}f_N(s)\d V_N\d s.
\end{aligned}
\end{equation}
\begin{proof}[Proof of Lemma \ref{lemma weak form}]
Every term above is well-defined since, for any $p\geq 2$ and $t\geq 0$, recalling the corresponding particle system \eqref{kacparticles}, we obtain
$$
\begin{aligned}
\int_{\R^{3N}}f_N(t)\frac{1}{N}\sum_{i=1}^N|v^i|^p\d V_N\leq\&  N^{\frac{p}{2}-1}\int_{\R^{3N}} f_N(t)\left(\sum_{i=1}^N|v^i|^2\right)^\frac{p}{2}\d V_N=N^{\frac{p}{2}-1}\E\left[\left(\sum_{i=1}^N|V_t^i|^2\right)^\frac{p}{2}\right] \\  =\& N^{\frac{p}{2}-1}\E\left[\left(\sum_{i=1}^N|V_0^i|^2\right)^\frac{p}{2}\right]\leq N^{p-1}r_0^p,
\end{aligned}
$$
where the second inequality follows from the conservations \eqref{eq conservation} and the last inequality uses Assumption \ref{assumption hard kac}, which ensures
$$
\frac{1}{N}\sum_{i=1}^N|V_0^i|^2\leq r_0^2,\quad\text{a.s.}
$$
For any fixed $N\in\N$, let $\vphi_N:=\frac{1}{N}\sum_{i=1}^N|v^i|^p$. Plugging this test function into the weak form \eqref{eq weak Liouville}, we obtain
\begin{equation}\label{eq p}
\begin{aligned}
\int_{\R^{3N}}\&f_N(t)\frac{1}{N}\sum_{i=1}^N|v^i|^p\d V_N-\int_{\R^{3N}}(f^0)^{\otimes N}\frac{1}{N}\sum_{i=1}^N|v^i|^p\d V_N\\
=\&\frac{p^2}{2N^2}\sum_{i\neq j}\int_0^t\int_{\R^{3N}}\Big[|v^i|^{p-2}|v^j|^{2}+|v^j|^{p-2}|v^i|^{2}\Big]|v^i-v^j|^{\gamma}f_N(s)\d V_N\d s\\&-\frac{p(p-2)}{2N^2}\sum_{i\neq  j}\int_0^t\int_{\R^{3N}}(v^i\cdot v^j)^2(|v^{j}|^{p-4}+|v^{i}|^{p-4})|v^i-v^j|^{\gamma}f_N(s)\d V_N\d s\\
\&-\frac{p}{N^2}\sum_{i\neq j}\int_0^t\int_{\R^{3N}}\Big[|v^i|^p+|v^j|^p\Big]|v^i-v^j|^{\gamma}f_N(s)\d V_N\d s,
\end{aligned}
\end{equation}  
where the desired inequality \eqref{ineq integral p} follows from the non-positivity of the second term on the right-hand side.
\end{proof}
\end{lemma}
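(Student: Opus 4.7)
The plan is to insert the polynomial test function $\varphi_N(V_N) := \tfrac{1}{N}\sum_{i=1}^N |v^i|^p$ into the weak formulation of the Liouville equation \eqref{eq Liouville}, compute the action of the collision operator on it explicitly, and then discard the sign-definite term that works in our favour to obtain \eqref{ineq integral p}.

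As a preliminary, I would check that every quantity in \eqref{ineq integral p} is finite. For $p \geq 2$ one has the subadditivity bound $\sum_i |v^i|^p \leq (\sum_i |v^i|^2)^{p/2}$; combined with the almost-sure conservation of kinetic energy \eqref{eq conservation} and the support constraint of Assumption \ref{assumption hard kac}, which forces $\tfrac{1}{N}\sum_i |V_0^i|^2 \leq r_0^2$ a.s., this yields $\int_{\R^{3N}} f_N(t)\,\tfrac{1}{N}\sum_i |v^i|^p \, \d V_N \leq N^{p/2-1} r_0^p$. The integrands on the right-hand side of \eqref{ineq integral p} are controlled similarly via $|v^i - v^j|^\gamma \leq (|v^i| + |v^j|)^\gamma$. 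Since $\varphi_N$ is not bounded, I would next approximate it by truncations $\varphi_{N,R} = \varphi_N\,\chi(|V_N|^2/R^2)$ with a smooth cutoff $\chi$, apply the weak formulation \eqref{eq weak Liouville} to $\varphi_{N,R}$, and pass to the limit $R \to \infty$ by dominated convergence using the moment bound just obtained.

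The core of the proof is the algebraic evaluation of
\[
\frac{1}{2N}\sum_{i\neq j}\Bigl\{A(z):(\nabla_{v^i}-\nabla_{v^j})^2\varphi_N + 2 B(z)\cdot(\nabla_{v^i}-\nabla_{v^j})\varphi_N\Bigr\}, \qquad z := v^i - v^j,
\]
which appears as the integrand of $\partial_t \int \varphi_N f_N \, \d V_N$ after two integrations by parts; the second term arises because $A$ depends on $z$ and produces the factor $\div_z A(z) = B(z)$ from \eqref{def AB}. Since $\varphi_N$ is separable across particle indices, the mixed Hessian blocks $\nabla^2_{v^i v^j}\varphi_N$ vanish and only
\[
\nabla^2_{v^i v^i}\varphi_N = \frac{p}{N}\Bigl[(p-2)|v^i|^{p-4}\, v^i\otimes v^i + |v^i|^{p-2}\,\Id\Bigr]
\]
and its $v^j$ counterpart remain. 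Writing $A(z) = |z|^{\gamma+2}\Id - |z|^\gamma z\otimes z$ and using the identity $|z|^2|v^i|^2 - (z\cdot v^i)^2 = |v^i|^2|v^j|^2 - (v^i\cdot v^j)^2$ (and its $v^j$ analogue), the $A$-contraction produces terms of the types $[|v^i|^{p-2}|v^j|^2 + |v^j|^{p-2}|v^i|^2]|z|^\gamma$, $[|v^i|^{p-4}+|v^j|^{p-4}](v^i\cdot v^j)^2|z|^\gamma$, and $|z|^{\gamma+2}[|v^i|^{p-2}+|v^j|^{p-2}]$. Expanding $|z|^{\gamma+2} = |z|^\gamma|z|^2$ and combining with the $B$-term, the pieces linear in $v^i\cdot v^j$ cancel exactly, and one recovers the three-term identity \eqref{eq p}: a positive $\tfrac{p^2}{2N^2}[|v^i|^{p-2}|v^j|^2 + |v^j|^{p-2}|v^i|^2]|z|^\gamma$, a non-positive $-\tfrac{p(p-2)}{2N^2}(v^i\cdot v^j)^2[|v^i|^{p-4}+|v^j|^{p-4}]|z|^\gamma$, and a negative dissipation term $-\tfrac{p}{N^2}[|v^i|^p + |v^j|^p]|z|^\gamma$.

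Since $p \geq 2$ makes the middle coefficient $-\tfrac{p(p-2)}{2}$ non-positive, dropping that term converts the identity \eqref{eq p} into \eqref{ineq integral p}. The main obstacle is the algebraic bookkeeping in this contraction: one must remember that two integrations by parts produce the $B\cdot\nabla\varphi$ term in addition to $A:\nabla^2\varphi$, and then carefully combine the $|z|^{\gamma+2}$ pieces with the $B$-contribution so that the $v^i\cdot v^j$-linear terms cancel and the coefficient of $|v^i|^p + |v^j|^p$ emerges with the correct (negative) sign. The approximation argument for the unbounded test function is routine once the moment bound above is in hand.
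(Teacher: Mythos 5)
Your proposal matches the paper's proof: you plug $\varphi_N = \tfrac{1}{N}\sum_i|v^i|^p$ into the weak form \eqref{eq weak Liouville}, carry out the same algebraic contraction of $A(z):\nabla^2_{v^iv^i}\varphi_N$ and $B(z)\cdot(\nabla_{v^i}-\nabla_{v^j})\varphi_N$ (using $|z|^2|v^i|^2-(z\cdot v^i)^2=|v^i|^2|v^j|^2-(v^i\cdot v^j)^2$ and observing the cancellation of the $v^i\cdot v^j$-linear pieces), arrive at the three-term identity \eqref{eq p}, and drop the non-positive $(v^i\cdot v^j)^2$ term for $p\geq2$. The one refinement you add---approximating the unbounded $\varphi_N$ by compactly supported truncations and passing to the limit via the a priori moment bound---addresses a point the paper passes over silently, but it does not change the route.
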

\begin{remark}
Under our assumptions, we  immediately obtain that for any convex and increasing function $\psi:\R^3\to\R$ and for all $t\geq 0$, it holds almost surely that
$$
\psi\left(\frac{1}{N}\sum_{i=1}^N|V_t^i|^2\right)=\psi\left(\frac{1}{N}\sum_{i=1}^N|V_0^i|^2\right)\leq \psi(r_0^2),\quad \text{a.s.}
$$
However, this does not directly yield a uniform-in-$N$ bound estimate of 
$\frac{1}{N}\sum_{i=1}^N\psi\left(|V_t^i|^2\right)$.
\end{remark}
To simplify the presentation below, we will continue to use the time-derivative form on the left-hand side of \eqref{ineq integral p}, even though it is more rigorous to interpret it in its time-integrated form.

In order to get further estimate on \eqref{ineq integral p}, we require a sharpened version of  Povzner-type inequality. The key idea is to extract an upper bound involving quadratic terms, which can be eventually bounded by replacing them with their initial values via the conservation \eqref{eq conservation}.
\begin{lemma}[Sharpened Povzner-type inequality]\label{lemma Povzner}
For $x,y\geq 0$ and $\gamma\in(0,1]$, it holds 
\begin{equation}\label{ineq povzner new}
\begin{aligned}
\&\left(-x^p-y^p+\frac{p}{2}x^{p-2}y^2+\frac{p}{2}y^{p-2}x^2\right)|x-y|^\gamma
\\\leq\& -\frac{1}{2}x^{p+\gamma}-\frac{1}{2}y^{p+\gamma}+x^{p}y^\gamma+y^px^\gamma+p^{1+\frac{\gamma}{2}}(x^{p-2+\gamma}y^{2}+y^{p-2+\gamma}x^2).
\end{aligned}
\end{equation}   
\end{lemma}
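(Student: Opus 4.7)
The plan is to move all terms to one side, exploit the symmetry $x\leftrightarrow y$ to assume without loss of generality $x\geq y\geq 0$ (so that $|x-y|^\gamma=(x-y)^\gamma$), and split the resulting expression into a ``low-order'' part $\Phi_1$ (no factor of $p$) and a ``high-order'' part $\Phi_2$ (carrying the $p$-coefficients). Writing the target as $\Phi_1+\Phi_2\leq 0$ with
\begin{align*}
\Phi_1 &:= -(x^p+y^p)(x-y)^\gamma + \tfrac12(x^{p+\gamma}+y^{p+\gamma}) - x^p y^\gamma - y^p x^\gamma,\\
\Phi_2 &:= \tfrac{p}{2}(x^{p-2}y^2+y^{p-2}x^2)(x-y)^\gamma - p^{1+\gamma/2}(x^{p-2+\gamma}y^2+y^{p-2+\gamma}x^2),
\end{align*}
I would establish $\Phi_1\leq -\tfrac12(x^{p+\gamma}+y^{p+\gamma})$ and $\Phi_2\leq \tfrac12(x^{p+\gamma}+y^{p+\gamma})$ separately. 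Both rely on elementary consequences of the concavity of $t\mapsto t^\gamma$ on $[0,\infty)$ (valid since $\gamma\in(0,1]$), namely the subadditivity $(x-y)^\gamma\geq x^\gamma - y^\gamma$ and the monotonicity $(x-y)^\gamma\leq x^\gamma$.

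For $\Phi_1$, substituting the subadditivity bound into $-(x^p+y^p)(x-y)^\gamma$, expanding, and then using $y^p x^\gamma\geq y^{p+\gamma}$ (since $x\geq y$) collapses $\Phi_1$ to $-\tfrac12(x^{p+\gamma}+y^{p+\gamma})$ after one arithmetic simplification. For $\Phi_2$, I would first apply $(x-y)^\gamma\leq x^\gamma$; since $p^{1+\gamma/2}\geq p/2$ for $p\geq 1$, the two $x^{p-2+\gamma}y^2$ cross-terms combine with non-positive coefficient, so it remains to prove
$$
\tfrac{p}{2}y^{p-2}x^{2+\gamma} - p^{1+\gamma/2}y^{p-2+\gamma}x^2 \leq \tfrac12 x^{p+\gamma}.
$$
Dividing by $x^{p+\gamma}$ (the case $x=0$ is trivial) and setting $u:=y/x\in[0,1]$, this is the scalar inequality $\tfrac{p}{2}u^{p-2}\leq \tfrac12 + p^{1+\gamma/2}u^{p-2+\gamma}$. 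I would dispatch this via a dichotomy at the scale $u\sim p^{-1/2}$: if $u\geq p^{-1/2}$ then $u^\gamma\geq p^{-\gamma/2}$, hence $p^{1+\gamma/2}u^{p-2+\gamma}\geq p\,u^{p-2}\geq \tfrac{p}{2}u^{p-2}$; if instead $u< p^{-1/2}$ then, for $p\geq 4$, $u^{p-2}< p^{-(p-2)/2}\leq 1/p$, forcing $\tfrac{p}{2}u^{p-2}\leq \tfrac12$.

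The main obstacle is not any single step but locking in the precise constant $p^{1+\gamma/2}$ in Group~II: a weaker exponent on $p$ (say $p^2$) would degrade the polynomial-moment exponent in Proposition~\ref{prop polynomial} and, ultimately, kill the weight $|v|^{4/(2+\gamma)}$ in Theorem~\ref{thm exponential} (cf.\ Remark~\ref{rmk sharp}). The extra $\gamma/2$ in the exponent emerges exactly from balancing the two sub-cases at the threshold $u\asymp p^{-1/2}$, so the argument must track this threshold carefully; a uniform Young-type bound would not be sharp enough. The case $p\in[2,4)$ is either trivial ($p=2$, where the bracket in the LHS vanishes identically) or handled by the same case-split with a slightly enlarged absolute constant.
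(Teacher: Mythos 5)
Your proof is correct (for the relevant range $p\geq 4$) and reaches the same threshold-balancing idea as the paper, but the bookkeeping is organized differently. The paper does not reduce to $x\geq y$; instead it applies $|x-y|^\gamma\geq \pm(x^\gamma-y^\gamma)$ and $|x-y|^\gamma\leq x^\gamma+y^\gamma$ term by term, splits off the ``good'' block $x^p y^\gamma+y^p x^\gamma+\frac p2(x^{p-2+\gamma}y^2+y^{p-2+\gamma}x^2)$, and then runs a three-way case analysis on the remainder $-x^{p+\gamma}-y^{p+\gamma}+\frac p2(x^{p-2}y^{2+\gamma}+y^{p-2}x^{2+\gamma})$ with the same cutoff $\varepsilon=(2p)^{-1/2}$. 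Your approach instead exploits the $x\leftrightarrow y$ symmetry up front, which lets you use the one-sided bound $(x-y)^\gamma\leq x^\gamma$ rather than $|x-y|^\gamma\leq x^\gamma+y^\gamma$; this collapses the cross terms to a single one, turns the paper's three-case argument into a two-case scalar dichotomy in $u=y/x$, and cleanly isolates the $p$-free part $\Phi_1$ (dispatched by subadditivity and $x\geq y$ alone) from the $p$-carrying part $\Phi_2$ (where the $p^{1+\gamma/2}$ emerges, as in the paper, from balancing at $u\asymp p^{-1/2}$). Both proofs implicitly need $p\geq 4$ (the paper needs $\eps^{p-2}\leq\eps^2$, you need $p^{-(p-2)/2}\leq p^{-1}$), which is exactly the range used in Proposition~\ref{prop polynomial}; your closing remark that $p\in[2,4)$ works ``with an enlarged absolute constant'' is not quite right as stated, since the lemma fixes the constant $p^{1+\gamma/2}$, but this does not matter for the application.
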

\begin{remark}
Comparing to the Povzner-type of estimate obtained in \cite{desvillettes2000spatially1}, namely, 
$$
-x^p-y^p+\frac{p}{2}x^{p-2}y^2+\frac{p}{2}y^{p-2}x^2\leq -\frac{1}{2}x^p-\frac{1}{2}y^p+p^\frac{3}{2}(x^{p-1}y+y^{p-1}x),   
$$
we take account the multiplier $|x-y|^\gamma$ and get a more refined coefficient in terms of $p$.
\end{remark}
\begin{proof}[Proof of Lemma \ref{lemma Povzner}]
The following elementary bounds hold for $x,y\geq 0$ and $\gamma\in(0,1]$:
$$|x-y|^\gamma\geq x^\gamma-y^\gamma,\quad |x-y|^\gamma\geq y^\gamma-x^\gamma \quad\text{and}\quad|x-y|^\gamma\leq x^\gamma+y^\gamma.$$ It implies that
$$
\begin{aligned}
\&\left(-x^p-y^p+\frac{p}{2}x^{p-2}y^2+\frac{p}{2}y^{p-2}x^2\right)|x-y|^\gamma
\\\leq\& -x^{p+\gamma}+x^{p}y^\gamma-y^{p+\gamma}+y^{p}x^\gamma+\frac{p}{2}(x^{p-2+\gamma}y^{2}+x^{p-2}y^{2+\gamma}+y^{p-2+\gamma}x^2+y^{p-2}x^{2+\gamma}).
\end{aligned}
$$    
We separate the following term,
$$
x^{p}y^\gamma+y^{p}x^\gamma+\frac{p}{2}(x^{p-2+\gamma}y^{2}+y^{p-2+\gamma}x^2),
$$
as it has the desired structure for our purposes. We now focus on estimating the remaining terms, namely,
$$
 -x^{p+\gamma}-y^{p+\gamma}+\frac{p}{2}(x^{p-2}y^{2+\gamma}+y^{p-2}x^{2+\gamma}).
$$
For some small $\eps>0$, if $x\leq \eps y$, then it follows that 
$$
\begin{aligned}
-x^{p+\gamma}-y^{p+\gamma}\&+\frac{p}{2}x^{p-2}y^{2+\gamma}+\frac{p}{2}y^{p-2}x^{2+\gamma}\\
\leq \&-x^{p+\gamma}+ \Big[-1+\frac{p}{2}(\eps^{p-2}+\eps^{2+\gamma})\Big]y^{p+\gamma}\\
\leq\&-x^{p+\gamma} -\frac{1}{2}y^{p+\gamma},
\end{aligned}
$$
where the last inequality can be achieved by assuming $\eps^{p-2}+\eps^{2+\gamma}\leq 2\eps ^2\leq \frac{1}{p}$, which is sufficient to let $\eps=\frac{1}{\sqrt{2p}}$. Similarly, if $y\leq \eps x$ with $\eps=\frac{1}{\sqrt{2p}}$, then it provides us that 
$$
-x^{p+\gamma}-y^{p+\gamma}+\frac{p}{2}(x^{p-2}y^{2+\gamma}+y^{p-2}x^{2+\gamma})\leq-y^{p+\gamma} -\frac{1}{2}x^{p+\gamma}.
$$
The remaining case is that: if $\frac{1}{\sqrt{2p}} y<x<\sqrt{2p}y$, we have
$$
-x^{p+\gamma}-y^{p+\gamma}+\frac{p}{2}x^{p-2}y^{2+\gamma}+\frac{p}{2}y^{p-2}x^{2+\gamma}\leq-x^{p+\gamma}-y^{p+\gamma}+\frac{p}{2}(\sqrt{2p})^\gamma\left(x^{p-2+\gamma}y^{2}+y^{p-2+\gamma}x^{2}\right).
$$
In conclusion, it holds
$$
\begin{aligned}
\Big(-x^p-y^p+\&\frac{p}{2}x^{p-2}y^2+\frac{p}{2}y^{p-2}x^2\Big)|x-y|^\gamma
\\\leq\&x^{p}y^\gamma+y^{p}x^\gamma+\frac{p}{2}(x^{p-2+\gamma}y^{2}+y^{p-2+\gamma}x^2)
\\\&+\max\Big\{-x^{p+\gamma} -\frac{1}{2}y^{p+\gamma},\,-y^{p+\gamma} -\frac{1}{2}x^{p+\gamma},\\\&\qquad\qquad-x^{p+\gamma} -y^{p+\gamma}+\frac{p}{2}(\sqrt{2p})^\gamma\left(x^{p-2+\gamma}y^{2}+y^{p-2+\gamma}x^{2}\right)\Big\}
\\\leq\& -\frac{1}{2}x^{p+\gamma}+x^{p}y^\gamma-\frac{1}{2}y^{p+\gamma}+y^px^\gamma+p^{1+\frac{\gamma}{2}}(x^{p-2+\gamma}y^{2}+y^{p-2+\gamma}x^2).
\end{aligned}
$$

\end{proof}
Applying Lemma \ref{lemma Povzner}, we resume our estimate \eqref{ineq integral p} as follows:
\begin{equation}\label{ineq afterPovnzer}
\begin{aligned}
\&\frac{\d}{\d t} \int_{\R^{3N}}f_N(t,V_N)\frac{1}{N}\sum_{i=1}^N|v^i|^p\d V_N\\
\leq\&\frac{p}{N^2}\sum_{i\neq j}\int_{\R^{3N}}\Big[-|v^i|^p-|v^j|^p+\frac{p}{2}|v^i|^{p-2}|v^j|^{2}+\frac{p}{2}|v^j|^{p-2}|v^i|^{2}\Big]|v^i-v^j|^{\gamma}f_N\d V_N\\
\leq\&\frac{p}{N^2}\sum_{i\neq j}\int_{\R^{3N}}\Big[-\frac{1}{2}|v^i|^{p+\gamma}-\frac{1}{2}|v^j|^{p+\gamma}+|v^i|^{p}|v^j|^\gamma+|v^j|^p|v^i|^\gamma\\&\qquad\qquad\qquad\qquad+p^{1+\frac{\gamma}{2}}(|v^i|^{p-2+\gamma}|v^j|^{2}+|v^j|^{p-2+\gamma}|v^i|^2)\Big]f_N\d V_N.
\end{aligned}
\end{equation}
We rewrite the moments into the expectation form by exchangeability such as
\begin{equation}\label{eq pde-sde form}
\int_{\R^{3N}}f_N(t,V_N)\frac{1}{N}\sum_{i=1}^N|v^i|^p\d V_N=\int_{\R^{3N}}f_{N,1}(t,v^1)|v^1|^p\d v^1=\E[|V_t^1|^p]
\end{equation}
and
$$
\begin{aligned}
\int_{\R^{3N}}f_N(t,V_N)\frac{1}{N^2}\sum_{i\neq j}|v^i|^p|v^i|^\gamma\d V_N=\&\frac{N(N-1)}{N^2}\int_{\R^{3N}}f_{N,2}(t,v^1,v^2)|v^1|^p|v^2|^\gamma\d v^1\d v^2\\\leq\&\E[|V_t^1|^p|V_t^2|^\gamma],
\end{aligned}
$$
where it also holds $\E[|V_t^1|^p|V_t^2|^{\gamma}]\leq \E[|V_t^1|^{p+\gamma}]$. Then, \eqref{ineq afterPovnzer} yields
$$
\begin{aligned}
\frac{\d}{\d t}\E\&[|V_t^1|^p]\leq -p\E[|V_t^1|^{p+\gamma}]+2p\E[|V_t^1|^p|V_t^2|^{\gamma}]+2p^{2+\frac{\gamma}{2}}\E[|V_t^1|^{p-2+\gamma}|V_t^2|^2]\\\leq\& -p\E[|V_t^1|^{p+\gamma}]+2p\E[|V_t^1|^p\frac{1}{N}\sum_{i=1}^N|V_t^i|^{\gamma}]+2p^{2+\frac{\gamma}{2}}\E[|V_t^1|^{p-2+\gamma}\frac{1}{N}\sum_{i=1}^N|V_t^i|^2]\\
\leq\& -p\E\left[|V_t^1|^{p+\gamma}\right]+2p\E\left[|V_t^1|^p\left(\frac{1}{N}\sum_{i=1}^N|V_t^i|^2\right)^{\gamma/2}\right]\\\&\qquad\qquad\qquad\qquad\qquad\qquad+2p^{2+\frac{\gamma}{2}}\E\left[|V_t^1|^{p-2+\gamma}\left(\frac{1}{N}\sum_{i=1}^N|V_t^i|^2\right)\right].  
\end{aligned}
$$
The almost sure conservation \eqref{eq conservation} avoids using the H\"{o}lder's inequality and yields 
\begin{equation}\label{ineq original power}
\begin{aligned}
\frac{\d}{\d t}\E[|V_t^1|^p]
\leq\& -p\E\left[|V_t^1|^{p+\gamma}\right]+2p\E\left[|V_t^1|^p\left(\frac{1}{N}\sum_{i=1}^N|V_0^i|^2\right)^{\gamma/2}\right]\\\&\qquad\qquad\qquad\qquad\qquad\qquad+2p^{2+\frac{\gamma}{2}}\E\left[|V_t^1|^{p-2+\gamma}\left(\frac{1}{N}\sum_{i=1}^N|V_0^i|^2\right)\right]\\\leq\& -p\E\left[|V_t^1|^{p+\gamma}\right]+2pr_0^\gamma\E\left[|V_t^1|^p\right]+2p^{2+\frac{\gamma}{2}}r_0^2\E\left[|V_t^1|^{p-2+\gamma}\right],
\end{aligned}
\end{equation}
where the initial distribution $f^0$ of each $V_0^i$ is compactly supported in the ball of radius $r_0$ under Assumption \ref{assumption hard kac}.

If we let $p=2$ in \eqref{eq p}, then it has
$$
\frac{\d}{\d t}\int_{\R^{3N}}f_N(t,V_N)\frac{1}{N}\sum_{i=1}^N|v^i|^2\d V_N=\frac{\d}{\d t}\int_{\R^3}|v^1|^2f_{N,1}(t, v^1)\d v^1=0.
$$
Thus, the normalised measure $$\frac{|v^1|^2f_{N,1}(t,v^1)}{\int_{\R^3}|v|^2f^0\d v}\d v^1$$ is a probability measure for any $t\geq0$. This implies that, for $p>2$,
$$
\begin{aligned}
\E\left[|V_t^1|^{p}\right]\leq \left(\E\left[\left((\int_{\R^3}|v|^2f^0\d v)|V_t^1|^{p-2}\right)^\frac{p-2+\gamma}{p-2}\frac{|V_t^1|^2}{\int_{\R^3}|v|^2f^0\d v}\right]\right)^\frac{p-2}{p-2+\gamma}\leq \left(\E\left[|V_t^1|^{p+\gamma}\right]\right)^\frac{p-2}{p-2+\gamma};
\end{aligned}
 $$
similarly, for $p>4-\gamma$
$$
\begin{aligned}
\&\E\left[|V_t^1|^{p-2+\gamma}\right]\leq\left(\E\left[\left(|V_t^1|^{p-4+\gamma}\right)^\frac{p-2}{p-4+\gamma}|V_t^1|^2\right]\right)^\frac{p-4+\gamma}{p-2}\leq \left(\E\left[|V_t^1|^{p}\right]\right)^\frac{p-4+\gamma}{p-2}.
\end{aligned}
 $$
Plugging these estimates into \eqref{ineq original power} yields the following lemma.
\begin{lemma}
Under the same assumptions as in Proposition \ref{prop polynomial}, then the following moment estimate holds for all $p>4-\gamma$,
$$
\begin{aligned}
\frac{\d}{\d t}\E[|V_t^1|^p]
\leq\& -p\left(\E[|V_t^1|^{p}]\right)^{1+\frac{\gamma}{p-2}}+2pr_0^\gamma\E[|V_t^1|^p]+2r_0^2p^{2+\frac{\gamma}{2}}\left(\E[|V_t^1|^p]\right)^{1-\frac{2-\gamma}{p-2}}. 
\end{aligned}
$$     
\end{lemma}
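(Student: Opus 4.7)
The plan is to close \eqref{ineq original power} into a self-contained differential inequality for the scalar quantity $y(t) := \E[|V_t^1|^p]$ by replacing the higher moment $\E[|V_t^1|^{p+\gamma}]$ and the lower moment $\E[|V_t^1|^{p-2+\gamma}]$ on its right-hand side by expressions involving only $y$. The tool for this reduction is an energy-weighted moment interpolation that exploits the conservation of kinetic energy at the level of the first marginal. As already observed in the excerpt by testing the weak formulation with $\vphi_N = N^{-1}\sum_i |v^i|^2$, the quantity $E_0 := \int_{\R^3} |v|^2 f_{N,1}(t,v)\,\d v = \int_{\R^3}|v|^2 f^0\,\d v$ is independent of time and bounded by $r_0^2$ under Assumption \ref{assumption hard kac}, so the measure $\mu_t(\d v) := E_0^{-1}|v|^2 f_{N,1}(t,v)\,\d v$ is a probability measure uniformly in $t$.

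Against this measure I would apply Jensen's inequality twice. A first application with the convex power $x \mapsto x^{(p-2+\gamma)/(p-2)}$ applied to $|v|^{p-2}$ yields, after rewriting $\E[|V_t^1|^p] = E_0 \int |v|^{p-2}\,\mu_t(\d v)$, the lower bound $\E[|V_t^1|^{p+\gamma}] \geq E_0^{-\gamma/(p-2)}(\E[|V_t^1|^p])^{1+\gamma/(p-2)}$. A second application with exponent $(p-2)/(p-4+\gamma)$ applied to $|v|^{p-4+\gamma}$, for which the hypothesis $p > 4-\gamma$ is exactly what guarantees $p-4+\gamma > 0$ and the convexity of the power map, produces $\E[|V_t^1|^{p-2+\gamma}] \leq E_0^{(2-\gamma)/(p-2)}(\E[|V_t^1|^p])^{1-(2-\gamma)/(p-2)}$.

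The remaining step is to substitute both interpolations directly into \eqref{ineq original power}. The multiplicative prefactors $E_0^{-\gamma/(p-2)}$ and $E_0^{(2-\gamma)/(p-2)}$ are uniformly bounded in $p$ by powers of $\max(1, r_0^2)$ and can be absorbed into the existing $r_0^\gamma$ and $r_0^2$ coefficients, which gives the stated inequality. The step itself is essentially algebraic and presents no genuine obstacle, since the substantive work --- namely the sharpened Povzner estimate of Lemma \ref{lemma Povzner} and the almost-sure conservation trick that avoided a H\"older loss when handling the empirical terms $N^{-1}\sum_i |V_t^i|^2$ --- has already been completed upstream. The only care required is careful exponent bookkeeping and verification that the resulting constants are $p$-independent, which is critical because the polynomial moment bound of Proposition \ref{prop polynomial}, and hence the summability in the Taylor-series argument leading to Theorem \ref{thm exponential}, is very sensitive to the sharp $p$-dependence of the coefficients in this differential inequality.
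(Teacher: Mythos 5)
Your proposal takes essentially the same route as the paper: both normalize $|v|^2 f_{N,1}(t,\cdot)\,\d v$ by the conserved second moment $E_0 = \int_{\R^3}|v|^2 f^0$ to obtain a probability measure and then apply Jensen's inequality with the exponents $(p-2+\gamma)/(p-2)$ and $(p-2)/(p-4+\gamma)$ to interpolate the moments in \eqref{ineq original power}. One small caution: you claim the prefactors can be ``absorbed into the existing $r_0^\gamma$ and $r_0^2$ coefficients,'' but $E_0^{-\gamma/(p-2)}$ multiplies the negative dissipation term whose stated coefficient is exactly $-p$ with no $r_0$ present, so dropping it in the stated direction requires $E_0\leq 1$ --- a normalization the paper itself tacitly performs when it silently drops these factors in its displayed Jensen chain, and which is harmless downstream since the constants in Proposition \ref{prop polynomial} depend on $r_0$ anyway.
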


\medskip

We remain to analyse the ordinary differential inequality
\begin{equation}\label{ineq ht}
 \frac{\d}{\d t}h_t\leq -a(h_t)^{1+\alpha}+bh_t+c(h_t)^{1-\beta},   
\end{equation}
with $h_t=\E[|V_t^1|^p]$ and
$$a=p,\quad b=2pr_0^\gamma,\quad c=2r_0^2p^{2+\frac{\gamma}{2}},\quad \alpha=\frac{\gamma}{p-2},\quad \beta=\frac{2-\gamma}{p-2}.$$
The key idea is to decompose $t\in[0,\infty)$ into short- and long-time regimes.  
Firstly, we can control $h_t$ on the right-hand side by $(h_t)^{1+\alpha}$ as follows
$$
\begin{aligned}
 \frac{\d}{\d t}h_t\leq\& -a(h_t)^{1+\alpha}+bh_t+c(h_t)^{1-\beta}\\
 \leq\& -a(h_t)^{1+\alpha}+\frac{(\delta h_t)^{1+\alpha}}{1+\alpha}+\frac{\alpha(\frac{b}{\delta})^\frac{1+\alpha}{\alpha}}{1+\alpha}+c(h_t)^{1-\beta}
 \\
 \leq\&\frac{\alpha(\frac{b}{a^{1/(1+\alpha)}})^\frac{1+\alpha}{\alpha}}{1+\alpha}+c(h_t)^{1-\beta}\\
 \leq\&\frac{2}{p}\left(\frac{2pr_0^\gamma}{p^\frac{p-2}{p-2+\gamma}}\right)^\frac{p-2+\gamma}{\gamma}+c(h_t)^{1-\beta}=2^\frac{p-2+2\gamma}{\gamma}r_0^{p-2+\gamma}+2r_0^2p^{2+\frac{\gamma}{2}}(h_t)^{1-\beta},
\end{aligned}
 $$
where we take $\delta=a^\frac{1}{1+\alpha}$
and use the elementary inequality 
 $$
 \frac{\alpha}{1+\alpha}=\frac{\gamma/(p-2)}{1+\gamma/(p-2)}\leq\frac{2}{p}.
 $$
The inequality implies that
$$
\frac{\d}{\d t}h_t^\beta=\beta\frac{\frac{\d}{\d t}h_t}{(h_t)^{1-\beta}}\leq \beta\left(\frac{2^\frac{p-2+2\gamma}{\gamma}r_0^{p-2+\gamma}}{(h_t)^{1-\beta}}+2r_0^2p^{2+\frac{\gamma}{2}}\right).
$$
Then it holds either 
$$
\frac{2^\frac{p-2+2\gamma}{\gamma}r_0^{p-2+\gamma}}{(h_t)^{1-\beta}}\geq 2r_0^2p^{2+\frac{\gamma}{2}}\quad\text{or}\quad \frac{2^\frac{p-2+2\gamma}{\gamma}r_0^{p-2+\gamma}}{(h_t)^{1-\beta}}\leq 2r_0^2p^{2+\frac{\gamma}{2}}.
$$
That is to say, we have either
$$
h_t\leq \left(\frac{2^\frac{p-2+2\gamma}{\gamma}r_0^{p-2+\gamma}}{2r_0^2p^{2+\frac{\gamma}{2}}}\right)^\frac{1}{1-\beta}=r_0^{p-2}\left(\frac{2^\frac{p-2+\gamma}{\gamma}}{p^{2+\frac{\gamma}{2}}}\right)^\frac{p-2}{p-4+\gamma}\leq r_0^{p-2}\left(2^{\frac{p-2}{\gamma}-3-\gamma} \right)^\frac{p-2}{p-4+\gamma}\leq (r_04^\frac{1}{\gamma})^{p-2},
$$
with $p\geq4$; or
$$
\begin{aligned}
\frac{\d}{\d t}h_t^\beta\leq   4r_0^2p^{2+\frac{\gamma}{2}}\beta\leq\frac{4-2\gamma}{p}\left(4r_0^2p^{2+\frac{\gamma}{2}}\right)\leq 16r_0^2p^{1+\frac{\gamma}{2}}.    
\end{aligned}
$$
Integrating over time and using the initial bound $h_0=\E[V_0^1|^p]\leq r_0^p$, we obtain
$$
\begin{aligned}
h_t^\beta\leq\& \max\Big\{r_0^{p\beta}+16tr_0^2p^{1+\frac{\gamma}{2}},(r_04^\frac{1}{\gamma})^{\beta(p-2)}\Big\}.
\end{aligned}
$$
Now consider the small-time regime defined by
\begin{equation}\label{tleq}
  0\leq t< p^{-\frac{\gamma(2+\gamma)}{4}}.
\end{equation}
In this case, we obtain, for some constant $C_{r_0,\gamma}$,
$$
\begin{aligned}
h_t^\beta\leq \max\Big\{r_0^{4-2\gamma}+16r_0^2p^{1+\frac{\gamma}{2}-\frac{\gamma(2+\gamma)}{4}},(r_04^\frac{1}{\gamma})^{2-\gamma}\Big\}\leq C_{r_0,\gamma}p^{1-\frac{\gamma^2}{4}}.
\end{aligned}
$$
Then,  we have the estimate for small-time regime
\begin{equation}\label{ineq small t}
\sup_{t\in[0, p^{-\frac{\gamma(2+\gamma)}{4}})}h_t\leq \left(C_{r_0,\gamma}p^{1-\frac{\gamma^2}{4}}\right)^\frac{p-2}{2-\gamma}=\left(C_{r_0,\gamma}\right)^\frac{p-2}{2-\gamma}p^{\frac{(2+\gamma)(p-2)}{4}}.
\end{equation}

To proceed with the estimate in the large-time, we apply the lemma below; see, for instance \cite[Proposition 6]{fournier2021stability}.
\begin{lemma}
For $t>0$, let $h_t\geq 0$ satisfying \begin{equation}\label{ineq ut}
 \frac{\d}{\d t}h_t\leq -a(h_t)^{1+\alpha}+bh_t+c(h_t)^{1-\beta},   
\end{equation}
then it holds
 $$
h_t\leq \left(\frac{2}{a\alpha t}\right)^\frac{1}{\alpha} +\left(\frac{4b}{a}\right)^\frac{1}{\alpha} +\left(\frac{4c}{a}\right)^\frac{1}{\alpha+\beta}. 
 $$
\end{lemma}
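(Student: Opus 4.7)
The plan is to prove the bound by a threshold-comparison argument. I would first introduce the threshold
$$
R := \left(\frac{4b}{a}\right)^{1/\alpha} + \left(\frac{4c}{a}\right)^{1/(\alpha+\beta)},
$$
and observe that whenever $h_s \geq R$ both lower-order terms on the right-hand side of \eqref{ineq ut} can each be absorbed by a quarter of the dissipation term: indeed $h_s \geq (4b/a)^{1/\alpha}$ forces $b h_s \leq (a/4)(h_s)^{1+\alpha}$, and $h_s \geq (4c/a)^{1/(\alpha+\beta)}$ forces $c (h_s)^{1-\beta} \leq (a/4)(h_s)^{1+\alpha}$. So as long as $h_s \geq R$, one has the clean dissipative bound $\frac{d}{ds} h_s \leq -\frac{a}{2} (h_s)^{1+\alpha}$.

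Next I would run a dichotomy at a fixed $t>0$. If $h_s > R$ throughout $(0,t]$, I would compare $h$ with the maximal solution $u(s) = (a \alpha s / 2)^{-1/\alpha}$ of $\dot u = -(a/2)\, u^{1+\alpha}$, which is obtained by integrating $(u^{-\alpha})' = a\alpha/2$ from $u(0^+) = +\infty$. This comparison yields $h_t \leq (2/(a\alpha t))^{1/\alpha}$. Otherwise there exists some $s_0 \in (0,t]$ with $h_{s_0} \leq R$; setting $\tau := \sup\{s \in [s_0,t] : h_s \leq R\}$, either $\tau = t$ and $h_t \leq R$ directly, or else $h_\tau = R$ by continuity and, on $(\tau,t]$, the dissipative regime forces $\dot h_s \leq 0$, so again $h_t \leq h_\tau = R$. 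Combining both cases gives exactly $h_t \leq R + (2/(a\alpha t))^{1/\alpha}$.

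I do not expect any serious obstacle: the argument is essentially a standard energy-dissipation dichotomy. The only mildly delicate point will be justifying the barrier in the first branch, namely that any nonnegative function satisfying $\dot h_s \leq -(a/2) (h_s)^{1+\alpha}$ on $(0,t]$ is dominated by the singular-initial-data solution $(a\alpha s/2)^{-1/\alpha}$; this follows from the pointwise inequality $\frac{d}{ds}(h_s^{-\alpha}) \geq a\alpha/2$, integrated from an arbitrarily small positive time and then letting the lower endpoint tend to $0$, so the initial value of $h$ plays no role in Case A. The remaining manipulations are completely routine.
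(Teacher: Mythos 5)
Your proof is correct. The paper does not actually prove this lemma---it simply cites \cite[Proposition 6]{fournier2021stability}---so there is no in-paper argument to compare against; your self-contained derivation is therefore a genuine addition. The structure you use (absorb the lower-order terms $bh$ and $ch^{1-\beta}$ into fractions of the dissipation once $h$ exceeds the threshold $R=(4b/a)^{1/\alpha}+(4c/a)^{1/(\alpha+\beta)}$, then compare against the singular-initial-data solution of $\dot u=-(a/2)u^{1+\alpha}$ via the substitution $h\mapsto h^{-\alpha}$) is the standard route for polynomial-dissipative ODIs with a regularizing effect, and it is indeed what underlies the cited Fournier--Guillin result. All the steps check: $h_s\ge(4b/a)^{1/\alpha}$ gives $bh_s\le(a/4)h_s^{1+\alpha}$, $h_s\ge(4c/a)^{1/(\alpha+\beta)}$ gives $ch_s^{1-\beta}\le(a/4)h_s^{1+\alpha}$, so $\dot h_s\le-(a/2)h_s^{1+\alpha}$ on the super-$R$ set; in that regime $\frac{d}{ds}h_s^{-\alpha}\ge a\alpha/2$, and integrating from $s_1\downarrow0$ kills the initial-data contribution and yields $h_t\le(2/(a\alpha t))^{1/\alpha}$. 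The Case~B bookkeeping with $\tau=\sup\{s\in[s_0,t]:h_s\le R\}$ is also sound (in fact, as you set it up, $\tau<t$ cannot occur since $\dot h_\tau<0$ would push $h$ below $R$ immediately, but you handle that branch correctly anyway). No gap.
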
 
When $t$ belongs to the complementary set of \eqref{tleq}, namely,
\begin{equation}\label{tgeq}
  \frac{1}{t}\leq p^\frac{\gamma(2+\gamma)}{4},
\end{equation}
it has
$$
\begin{aligned}
h_t\leq\& \left(\frac{2(p-2)}{p\gamma t}\right)^\frac{p-2}{\gamma} +(8r_0^\gamma)^\frac{p-2}{\gamma} +\left(8r_0^2p^{1+\frac{\gamma}{2}}\right)^\frac{p-2}{2}\\
\leq\& \left(\frac{4}{\gamma t}\right)^\frac{p-2}{\gamma} +(8r_0^\gamma)^\frac{p-2}{\gamma} +\left(8r_0^2p^{1+\frac{\gamma}{2}}\right)^\frac{p-2}{2}\\
\leq\& \left(C_{r_0,\gamma}\,p^{\frac{2+\gamma}{2}}\right)^\frac{p-2}{2}, 
\end{aligned}
$$
where the constant only depends on $r_0$ and $\gamma$.

\medskip

Together with the estimate for small time \eqref{ineq small t},
we thus obtain the uniform-in-time and uniform-in-$N$ estimate, by using the equivalence \eqref{eq pde-sde form},
$$
\sup_{t\in[0,\infty)}h_t=\sup_{t\in[0,\infty)}\E[|V_t^1|^p]=\sup_{t\in[0,\infty)}\int_{\R^{3}}|v^1|^pf_{N,1}(t,v^1)\d v^1\leq \left(C_{r_0,\gamma}\right)^pp^{\frac{(2+\gamma)(p-2)}{4}},
$$
where $C_{r_0,\gamma}$ is some constant independent of $p$, $t$ and $N$. This concludes the proof of Proposition \ref{prop polynomial}.

\section{Existence of solutions of infinite Landau hierarchy}\label{sec existence}

In this section, we first establish the well-posedness result of the Liouville equation \eqref{eq Liouville}. Furthermore, we prove the marginal of the Liouville equation \eqref{eq Liouville} converges to a weak solution of the Landau hierarchy \eqref{eq Landau hierarchy}, as stated in Theorem \ref{thm existence}.

We define weak solutions of the Liouville equation \eqref{eq Liouville} as follows.
\begin{definition}[Weak solutions of the Liouville equation]\label{def fN} 
For any $T>0$, a weak solution of the Liouville equation \eqref{eq Liouville} with $f_{N}(0)=(f^0)^{\otimes N}$ satisfies the following
weak form. For any $\vphi_{N}\in C^2_b(\R^{3N})$, it holds
\begin{equation}\label{eq weak Liouville}
\begin{aligned}
\int_{\R^{3N}}f_{N}\& (T)\vphi_{N}\d V_{N}-\int_{\R^{3N}} (f^0)^{\otimes N}\vphi_{N}\d V_{N}\\=\&\frac{1}{N}\sum_{i\neq j}^N\int_0^T\int_{\R^{3N}}A(v^i-v^j):(\nabla_{v^iv^i}^2\vphi_{N}-\nabla_{v^iv^j}^2\vphi_{N})f_{N}\d V_{N}\d t\\\&+\frac{1}{N}\sum_{i\neq j}^N\int_0^T\int_{\R^{3N}}B(v^i-v^j)\cdot (\nabla_{v^i}\vphi_{N}-\nabla_{v^j}\vphi_{N}) f_{N}\d V_{N}\d t.
\end{aligned}
\end{equation}
Moreover, $f_N\in L^\infty([0,T],L^1(\R^{3N}))$ represents the density of the joint distribution of the particles defined in \eqref{kacparticles} such that $\law(V^1_t,\ldots,V^N_t)=f_N(t)\d V_N$ and it has the finite second moment and finite entropy. 
\end{definition}
The well-posedness of Liouville equation \eqref{eq Liouville} is  presented as below.
\begin{proposition}[Well-posedness of the Liouville equation]\label{prop well-posedness of fN}
Under Assumption \ref{assumption hard kac}, there exists a unique weak solution of the Liouville equation \eqref{eq Liouville}. 
\end{proposition}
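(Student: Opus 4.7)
The plan is to construct the unique weak solution directly as the joint law of the Kac particle SDE \eqref{kacparticles}, using the correspondence between the Fokker--Planck PDE and the associated martingale problem. Since \cite{fournier2017kac} establishes the well-posedness of the SDE system \eqref{kacparticles} under Assumption \ref{assumption hard kac} (in fact, under weaker conditions, as noted in Remark \ref{rmk initial}), there exists a unique strong solution $(V_t^1,\ldots,V_t^N)_{t\ge 0}$, and it is natural to try $f_N(t) := \law(V_t^1,\ldots,V_t^N)$.

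For existence, the first step is to apply It\^o's formula to $\varphi_N(V_t^1,\ldots,V_t^N)$ for $\varphi_N\in C_b^2(\R^{3N})$ and take expectations; the diffusion coefficient $\sigma(z)=|z|^{1+\gamma/2}\Pi(z)$ and the anti-symmetric nature of the Brownian motions $Z^{i,j}$ produce exactly the divergence-form right-hand side of \eqref{eq weak Liouville}, with the quadratic-variation terms yielding the $A$-contribution and the drift terms yielding the $B$-contribution. The integrability of the integrands with respect to $f_N\,dV_N\,dt$ follows from the uniform polynomial moment bounds of Proposition \ref{prop polynomial} (applied to the first two marginals by exchangeability), since $A$ grows like $|v^i-v^j|^{\gamma+2}$ and $B$ like $|v^i-v^j|^{\gamma+1}$. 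The finite second moment comes directly from the almost-sure conservation \eqref{eq conservation} and the compact support of $f^0$, while the finite entropy follows from the standard Fokker--Planck entropy-dissipation identity: formally testing the equation against $\log f_N$ gives
\begin{equation*}
\frac{d}{dt}\int f_N\log f_N\,\d V_N = -\frac{1}{2N}\sum_{i\neq j}\int \frac{\bigl\langle A(v^i-v^j)(\nabla_{v^i}-\nabla_{v^j})f_N,\,(\nabla_{v^i}-\nabla_{v^j})f_N\bigr\rangle}{f_N}\,\d V_N\le 0,
\end{equation*}
since $A$ is positive semi-definite, so $H(f_N(t))\le H(f_N(0))=N H(f^0)<\infty$ under Assumption \ref{assumption hard kac}.

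For uniqueness, I would use the martingale problem correspondence. Given any weak solution $f_N$ in the sense of Definition \ref{def fN}, it is by assumption the time-marginal of a stochastic process whose generator coincides with that of \eqref{kacparticles}. Since \cite{fournier2017kac} establishes pathwise uniqueness for the SDE \eqref{kacparticles} under Assumption \ref{assumption hard kac}, the martingale problem has a unique solution, and therefore the one-time marginal $f_N(t)$ is uniquely determined. Equivalently, if $f_N$ and $\tilde f_N$ are two such weak solutions, each coincides with $\law(V_t^1,\ldots,V_t^N)$ for the unique strong SDE solution, forcing $f_N=\tilde f_N$.

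The main technical obstacle is the justification of the formal entropy identity above: the diffusion matrix $A(v^i-v^j)$ is degenerate on the diagonal $\{v^i=v^j\}$ (it has rank two in $\R^3$) and its entries are unbounded, so neither $f_N$ has a priori smoothness nor is $\log f_N$ a legitimate test function in $C_b^2$. The cleanest way to handle this is via an approximation procedure: regularise the coefficients (for instance, replacing $|z|^{\gamma+2}$ by $(|z|^2+\eps)^{(\gamma+2)/2}$ and adding a small non-degenerate diffusion $\eps\,\mathrm{Id}$), obtain smooth solutions $f_N^{\eps}$ whose densities satisfy the rigorous entropy identity, establish uniform-in-$\eps$ bounds on second moment (from conservation modulo $\eps$-corrections) and entropy (from the non-positivity of the dissipation), and pass to the limit $\eps\to 0$ using weak compactness in $L^1$ and lower semi-continuity of the entropy. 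The moment bounds from Proposition \ref{prop polynomial} then upgrade these bounds to hold uniformly in $t$, completing the construction.
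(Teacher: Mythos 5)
Your proposal is essentially correct and, after acknowledging the technical obstacle, converges to the same core construction as the paper. The paper also proceeds by regularisation: it introduces a truncation of the relative velocity $|v^i-v^j|\leq 1/\eps$ plus a vanishing viscosity $\eps\Delta$, obtains classical solutions $f_N^\eps$, derives uniform-in-$\eps$ entropy and second-moment bounds by testing against $\log f_N^\eps$ and $|V_N|^2$, and passes to the limit by Dunford--Pettis --- exactly the programme you sketch in your final paragraph (with a mollified $(|z|^2+\eps)^{(\gamma+2)/2}$ rather than a sharp cut-off, which is an inessential variant). The genuine difference is in how you handle uniqueness. You argue via the SDE: since Definition \ref{def fN} identifies $f_N(t)$ with $\law(V_t^1,\ldots,V_t^N)$, uniqueness of the Kac SDE from \cite{fournier2017kac} immediately forces any two such weak solutions to coincide. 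This works and is in fact simpler than what the paper does, but it is also weaker: it exploits the fact that the probabilistic identification is built into the definition. The paper instead invokes the relative-entropy argument of \cite[Remark 2.6]{carrillo2025fisher}, which gives uniqueness within the broader class of weak solutions of the PDE with finite entropy and second moment, independently of any representation as the law of a process; this is also needed to close the gap you do not address, namely that the $f_N$ produced by the regularisation scheme actually coincides with the SDE law $\law(V^1,\ldots,V^N)$ required by the definition. Your opening Itô computation is sound but should not be taken to produce a \emph{density} satisfying the weak form without further argument --- the law of the degenerate SDE does not obviously have an $L^1$ density or finite entropy, which is precisely why both you and the paper fall back on regularisation; the SDE picture serves best as motivation, while the actual construction runs through the approximating PDEs.
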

\begin{proof}[Proof of Proposition \ref{prop well-posedness of fN}]
We will show that there exists a weak solution of the Liouville equation \eqref{eq Liouville}, which can be approximated by solutions of the equations with the truncation of relative velocity and the vanishing viscosity. We introduce $f_N^\eps$ which satisfies
\begin{equation}\label{eq vanishing}
\begin{aligned}
\p_tf_N^\eps=\frac{1}{2N}\sum_{i\neq j}\div_{v^i-v^j}\&\left[|v^i-v^j|^{\gamma+2}\Pi(v^i-v^j)I_{|v^i-v^j|\leq\frac{1}{\eps}}(\nabla_{v^i}-\nabla_{v^j})f_N^\eps\right]\\\&+\eps\sum_{i=1}^N\Delta_{v^i}f_N^\eps,      
\end{aligned}
\end{equation}
with the continuous bounded initial data $f_{N}^\eps=(f^{\eps,0})^{\otimes N}$.
For each $\eps>0$, this is a linear non-degenerate parabolic equation and the well-posedness of \eqref{eq vanishing} in $C^1\left([0,\infty),C_b^2(\R^{3N})\right)$ can be obtained by classical theory. We assume that the sequence $f^{\eps,0}\to f^0$ strongly in  $L^1$ with and its entropy and second moment is uniformly-in-$\eps$ bounded
$$
\int_{\R^{3}}f^{\eps,0}(v^1)\log f^{\eps,0}(v^1)\d v^1<\infty, \quad
\int_{\R^{3}}f^{\eps,0}(v^1)|v^1|^2\d v^1<\infty.
$$
For any $T>0$, we test \eqref{eq vanishing} against $\log f_N^\eps$ and get
$$
\begin{aligned}
\frac{1}{N}\int_{\R^{3N}}\&f_N^\eps(T)\log f_N^\eps(T)\d V_N=\int_{\R^{3}}f^{\eps,0}(v^1)\log f^{\eps,0}(v^1)\d v^1\\\&-\frac{1}{2N^2}\sum_{i\neq j}\int_0^T\int_{\R^{3N}}f_N^\eps(t) |v^i-v^j|^{\gamma+2}I_{|v^i-v^j|\leq\frac{1}{\eps}}\Pi(|v^i-v^j|)\\\&\qquad\qquad\qquad\qquad\qquad\qquad:\left((\nabla_{v^i}-\nabla_{v^j})\log f_N^\eps(t)\right)^{\otimes2}\d V_N\d t\\
\&-\frac{\eps}{N}\sum_{i=1}^N\int_0^T\int_{\R^{3N}}\left|\nabla_{v^i}\log f_N^\eps(t)\right|^2f_N^\eps\d V_N\d t.
\end{aligned}
$$
Then, the entropy is uniformly-in-$\eps$ bounded such as
$$
\frac{1}{N}\int_0^T\int_{\R^{3N}}f_N^\eps(t)\log f_N^\eps(t)\d V_N \d t<\infty.
$$
We then test \eqref{eq vanishing} against $|V_N|^2=\sum_{i=1}^N|v^i|^2$ and get
$$
\begin{aligned}
\int_{\R^{3N}}f_N^\eps(T) |V_N|^2\d V_N=\&N\int_{\R^{3}}f^{\eps,0}(v^1)|v^1|^2\d v^1+\eps\sum_{i=1}^N\int_0^T\int_{\R^{3N}}f_N^\eps(t)\Delta_{v^i}|V_N|^2\d V_N\d t\\=\&N\int_{\R^{3}}f^{\eps,0}(v^1)|v^1|^2\d v^1+6N\eps T.
\end{aligned}
$$
Then the second moment of $f_{N}^\eps$ on $[0,T]\times\R^{3m}$ is uniformly-in-$\eps$ bounded.  By Dunford-Pettis criterion, for fixed $N\in\N$, there exist a function $f_N\in L^\infty(0,T; L^1(\R^{3N}))$ with finite entropy and finite second moment  such that, there exists a subsequence $\eps'\to0$ such that
$$
f_N^{\eps'}\rightharpoonup f_N\quad\text{weakly in}\quad L^1([0,T]\times\R^{3N}),
$$
namely, for any $\psi_N\in L^\infty([0,T]\times\R^{3N})$, 
$$
\int_{\R^{3N}}f_N^\eps(t)\psi_N\d V_N\rightarrow \int_{\R^{3N}}f_N(t)\psi_N\d V_N \quad \text{as} \quad\eps'\to0.
$$
We deduce that for any $\vphi_N\in C_c^2(\R^{3N})$, as $\eps'\to0$, it holds
$$
\begin{aligned}
\&\bigg|\int_0^T\int_{\R^{3N}}A(v^i-v^j):(\nabla_{v^iv^i}^2\vphi_{N}-\nabla_{v^iv^j}^2\vphi_{N})f_N(t)\d V_{N}\d t\\
\&\qquad-\int_0^T\int_{\R^{3N}}A(v^i-v^j):(\nabla_{v^iv^i}^2\vphi_{N}-\nabla_{v^iv^j}^2\vphi_{N})f_N^{\eps'}(t)\d V_{N}\d t\bigg|\to 0.
\end{aligned}
$$
Similarly, for any $\vphi_N\in C_c^2(\R^{3N})$, as $\eps'\to0$, it holds
$$
\begin{aligned}
\&\bigg|\int_0^T\int_{\R^{3N}}B(v^i-v^j)\cdot(\nabla_{v^i}\vphi_{N}-\nabla_{v^j}\vphi_{N})f_N(t)\d V_{N}\d t\\
\&\qquad-\int_0^T\int_{\R^{3N}}B(v^i-v^j)\cdot(\nabla_{v^i}\vphi_{N}-\nabla_{v^j}\vphi_{N})f_N^{\eps'}(t)\d V_{N}\d t\bigg|\to 0,
\end{aligned}
$$
and 
$$
\eps'\left|\int_0^T\int_{\R^{3N}}f_N^{\eps'}(r)\Delta_{v^i}\vphi_N\d V_N\d t\right|\to0.
$$
In conclusion, $f_N$ satisfies \eqref{eq weak Liouville} and solves the Liouville equation \eqref{eq Liouville} in weak form. The uniqueness of weak solutions can be verified by the relative entropy, for instance, see \cite[Remark 2.6]{carrillo2025fisher}.
\end{proof}

By integrating the unique weak solution of the Liouville equation \eqref{eq Liouville} with respect to the variables $v^{m+1},\ldots,v^N$, we define the marginal $f_{N,m}$ such as \eqref{def marginal}. It can then be verified that $f_{N,m}$ is the unique weak solution of the corresponding BBGKY hierarchy \eqref{eq BBGKY}, whose weak solutions are defined as below. We denote $V_m=(v^1,\ldots,v^m)\in \R^{3m}$.
\begin{definition}[Weak solutions of the BBGKY hierarchy]\label{def BBGKY hierarchy}
For each fixed $m\in\N$ with $1\leq m\leq N-1$ and for any time $T>0$, a weak solution of the BBGKY hierarchy \eqref{eq BBGKY} with $f_{N,m}(0)=(f^0)^{\otimes m}$ satisfies the following
weak form of hierarchy, for any  $\vphi_{m}\in C^{2}_c(\R^{3m})$
\begin{align}\label{eq weak BBGKY}
\notag\int_{\R^{3m}}f_{N,m}(T)\&\vphi_{m} \d V_{m}-\int_{\R^{3m}}(f^0)^{\otimes m}\vphi_{m}\d V_{m}\\\notag=\&\frac{1}{N}\sum_{i\neq j}^m\int_0^T\int_{\R^{3m}}A(v^i-v^j):(\nabla_{v^iv^i}^2\vphi_{m}-\nabla_{v^iv^j}^2\vphi_{m})f_{N,m}\d V_{m}\d t\\\&+\frac{1}{N}\sum_{i\neq j}^m\int_0^T\int_{\R^{3m}}B(v^i-v^j)\cdot (\nabla_{v^i}\vphi_{m}-\nabla_{v^j}\vphi_{m}) f_{N,m}\d V_{m}\d t\\\notag
\&+ \frac{N-m}{N}\sum_{i=1}^m \int_0^T\int_{\R^{3(m+1)}}  A(v^i-v^{m+1}):\nabla_{v^iv^i}^2\vphi_{m}\,f_{N,m+1}\d V_{m+1}\d t\\\notag
\&+ \frac{2(N-m)}{N}\sum_{i=1}^m \int_0^T\int_{\R^{3(m+1)}}  B(v^i-v^{m+1})\cdot \nabla_{v^{i}}\vphi_{m}\,f_{N,m+1}\,\d V_{m+1}\d t.
\end{align}
Moreover, $f_{N,m}$ represents the density of the joint distribution $\law(V^1,\ldots,V^m)$ of the conservative particles \eqref{kacparticles}, and it has the finite second moment and finite entropy. 
\end{definition}

We remain to show that the weak solution of the BBGKY hierarchy converges to a weak solution of the infinite Landau hierarchy defined below.

\begin{definition}[Weak solutions of the infinite Landau hierarchy]\label{def Landau hierarchy}
For each fixed $m\in\N$ and for any $T>0$, a weak solution of Landau hierarchy \eqref{eq Landau hierarchy} with $f_m=(f^0)^{\otimes m}$ satisfies the following
weak form, for any $\vphi_{m}\in C^2_b(\R^{3m})$, 
\begin{equation}\label{eq weak Landau hierarchy}
\begin{aligned}
  \int_{\R^{3m}}\&f_{m}(T)\vphi_{m}\d V_{m}-\int_{\R^{3m}}(f^0)^{\otimes m}\vphi_{m}\d V_{m}\\=\&\sum_{i=1}^m \int_0^T\int_{\R^{3(m+1)}}  A(v^i-v^{m+1}):\nabla_{v^iv^i}^2\vphi_{m}\,f_{m+1}\d V_{m+1}\d t\\
\&+2\sum_{i=1}^m\int_0^T\int_{\R^{3(m+1)}}  B(v^i-v^{m+1})\cdot \nabla_{v^{i}}\vphi_{m}\,f_{m+1}\d V_{m+1}\d t.
\end{aligned}
\end{equation} 
Moreover, $f_{m}$ has finite entropy and satisfies the exponential moment bound
\begin{equation}\label{ineq fm exponential}
\sup_{t\in[0,\infty)}\int_{\R^{3m}}\exp(\xi_{r_0,\gamma}|V_m|^\frac{4}{\gamma+2})f_m(t)\d V_m<\infty.
\end{equation}
\end{definition}

\begin{proof}[Proof of Theorem \ref{thm existence}]
By the sub-additivity of the entropy, the $m$-th marginal $f_{N,m}$ satisfies the following uniform-in-$N$ estimate such as
$$
\begin{aligned}
\sup_{t\in[0,\infty)}\int_{\R^{3m}} f_{N,m}(t)\log f_{N,m}(t)\d V_m\leq \&\sup_{t\in[0,\infty)}\frac{m}{N}\int_{\R^{3N}} f_{N}(t)\log f_{N}(t)\d V_N\\\leq \&m\int_{\R^{3}}f^0(v^1)\log f^0(v^1)\d v^1,
\end{aligned}
$$
and 
$$
\sup_{t\in[0,\infty)}\int_{\R^{3m}} f_{N,m}(t)(1+|V_m|^2)\d V_m=1+m\int_{\R^3}f^0(v^1)|v^1|^2\d v^1.  $$
By Dunford-Pettis criterion, there exist a function $f_m\in L^\infty(0,T;L^1(\R^{3m}))$ with finite entropy and finite second moment  such that for a subsequence $N'\to\infty$, it holds 
$$
f_{N',m}\rightharpoonup f_m\quad\text{weakly in}\quad L^1([0,T]\times\R^{3m}).
$$
Namely, for any $\psi_m\in L^\infty([0,T]\times\R^{3m})$, it has
\begin{equation}\label{conv L1}
\int_0^T\int_{\R^{3m}}f_{N,m}\psi_m\d V_m\d t\rightarrow \int_0^T\int_{\R^{3m}}f_m\psi_m\d V_m\d t\quad \quad \text{as} \quad N\to0.
\end{equation}
Since $f_{N,m}$ is uniformly-in-$N$ bounded in $L^\infty(0,T;L^1(\R^{3m}))$ then, it yields
$$
f_{N',m}\to f_m\quad\text{in}\quad L^1(0,T;\P(\R^{3m})),
$$
where the weakly convergence in probability space is understood as
$
\W_2(f_{N,m},f_m)\to 0
$ as $N\to \infty$.

It remains to pass to the limit. For the first two terms on the right-hand side of \eqref{eq weak BBGKY}, the corresponding sum vanishes when $N\to\infty$. Since for any $\vphi_{m}\in C^2_b(\R^{3m})$, it can be bounded such as
$$
\begin{aligned}
\&\bigg|\frac{1}{N}\sum_{i\neq j}^m\int_0^T\int_{\R^{3m}}A(v^i-v^j):(\nabla_{v^iv^i}^2\vphi_{m}-\nabla_{v^iv^j}^2\vphi_{m})f_{N,m}\d V_{m}\d t\\\&\qquad+\frac{1}{N}\sum_{i\neq j}^m\int_0^T\int_{\R^{3m}}B(v^i-v^j)\cdot (\nabla_{v^i}\vphi_{m}-\nabla_{v^j}\vphi_{m}) f_{N,m}\d V_{m}\d t\bigg|\\
\leq \& \frac{m^2}{N}\bigg(\left\|A(v^1-v^2):(\nabla_{v^1v^1}^2\vphi_{m}-\nabla_{v^1v^2}^2\vphi_{m})\right\|_{L^\infty}\\\&\qquad+\left\|B(v^1-v^2)\cdot (\nabla_{v^1}\vphi_{m}-\nabla_{v^2}\vphi_{m})\right\|_{L^\infty}\bigg)\int_0^T\int_{\R^{3m}}f_{N,m}\d V_m
\\\leq\& \frac{m^2TC(\vphi_m)}{N}.
\end{aligned}
$$
For the third term on the right-hand side, we have the following estimate: for any  $\vphi_m\in C_c^2(\R^{3N})$, it holds
$$
\begin{aligned}
\bigg|\sum_{i=1}^m\&\int_0^T\int_{\R^{3(m+1)}}  A(v^i-v^{m+1}):\nabla_{v^iv^i}^2\vphi_{m}\,f_{m+1}\d V_{m+1}\d t\\
\&\qquad-\frac{N-m}{N}\sum_{i=1}^m\int_0^T\int_{\R^{3(m+1)}}  A(v^i-v^{m+1}):\nabla_{v^iv^i}^2\vphi_{m}\,f_{m+1,N}\d V_{m+1}\d t\bigg|\\
\leq\&m\bigg|\int_0^T\int_{\R^{3(m+1)}}  A(v^1-v^{m+1}):\nabla_{v^1v^1}^2\vphi_{m}\left(f_{m+1}(t)-f_{m+1,N}(t)\right)\d V_m\d t\bigg|\\
\&+\frac{m^2}{N}\bigg|\int_0^T\int_{\R^{3(m+1)}}  A(v^1-v^{m+1}):\nabla_{v^1v^1}^2\vphi_{m}f_{m+1,N}(t)\d V_m\d t\bigg|.
\end{aligned}
$$
The right-hand side converges to zero as subsequence $N\to0$, by the weak convergence result \eqref{conv L1}. Similarly, it holds
$$
\begin{aligned}
\bigg|\sum_{i=1}^m\&\int_0^T\int_{\R^{3(m+1)}}  B(v^i-v^{m+1})\cdot\nabla_{v^i}\vphi_{m}\,f_{m+1}\d V_{m+1}\d t\\
\&\qquad-\frac{N-m}{N}\sum_{i=1}^m\int_0^T\int_{\R^{3(m+1)}}  B(v^i-v^{m+1})\cdot\nabla_{v^i}\vphi_{m}\,f_{m+1,N}\d V_{m+1}\d t\bigg|\\
\leq\&m\bigg|\int_0^T\int_{\R^{3(m+1)}}  B(v^1-v^{m+1})\cdot\nabla_{v^1}\vphi_{m}\left(f_{m+1}(t)-f_{m+1,N}(t)\right)\d V_m\d t\bigg|\\
\&+\frac{m^2}{N}\bigg|\int_0^T\int_{\R^{3(m+1)}}  B(v^1-v^{m+1})\cdot\nabla_{v^i}\vphi_{m}f_{m+1,N}(t)\d V_m\d t\bigg|,
\end{aligned}
$$
which converges to zero as $N\to \infty$.

Therefore, for any $T>0$, $f_m\in L^\infty(0,T;L^1(\R^{3m}))$ solves \eqref{eq weak Landau hierarchy} in the weak sense. And the estimate \eqref{ineq fm exponential} can be proved in the same spirit as Corollary \ref{coro exponential} below.

\end{proof}

\begin{corollary}[Moment estimate of the hierarchy]\label{coro exponential}
Under Assumption \ref{assumption hard kac}, there exists some constant $\xi_{r_0,\gamma}$ depending only on $r_0$ and $\gamma$ such that. The weak solutions of the first equation \eqref{eq Landau hierarchy} obtained in Theorem \ref{thm existence} satisfies the following uniform-in-time exponential moment bound
$$
\sup_{t\in[0,\infty)}\int_{\R^{3}}\exp(\xi_{r_0,\gamma}|v|^\frac{4}{\gamma+2})f_1(t,v)\d v<\infty.
$$      
\end{corollary}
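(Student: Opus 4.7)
The plan is to transfer the uniform-in-$N$ and uniform-in-$t$ exponential moment bound from Theorem \ref{thm exponential} to the limit $f_1$ produced by Theorem \ref{thm existence}, using a truncation argument together with weak lower semicontinuity.

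First I would fix $t\in[0,\infty)$ and exploit the convergence $f_{N',1}\to f_1$ in $L^1(0,T;\P(\R^3))$ from Theorem \ref{thm existence} to extract, along a further subsequence (not relabelled), pointwise-in-$t$ Wasserstein and hence narrow convergence $f_{N',1}(t)\rightharpoonup f_1(t)$ in $\P(\R^3)$ for a.e.\ $t\in[0,T]$. For a truncation level $R>0$, I would use the bounded continuous test function $\vphi_R(v):=\min\!\bigl(\exp(\xi_{r_0,\gamma}|v|^{4/(\gamma+2)}),\,R\bigr)$. By the definition of narrow convergence together with the uniform bound \eqref{ineq exponential},
\begin{equation*}
\int_{\R^3}\vphi_R(v)\,f_1(t,v)\,\d v \;=\; \lim_{N'\to\infty}\int_{\R^3}\vphi_R(v)\,f_{N',1}(t,v)\,\d v \;\leq\; C,
\end{equation*}
with $C$ independent of $R$, $N$, and $t$. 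Sending $R\to\infty$ by monotone convergence yields the pointwise bound $\int_{\R^3}\exp(\xi_{r_0,\gamma}|v|^{4/(\gamma+2)})\,f_1(t,v)\,\d v\leq C$ for a.e.\ $t\in[0,T]$; since $T$ is arbitrary, this holds for a.e.\ $t\in[0,\infty)$.

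To upgrade from ``a.e.\ $t$'' to ``all $t\geq 0$'', I would observe that $t\mapsto f_1(t)$ is continuous in the narrow topology of $\P(\R^3)$, a fact that follows from the weak formulation \eqref{eq weak Landau hierarchy} tested against $C_b^2$ functions and extended by a density argument. The functional $\mu\mapsto\int_{\R^3}\exp(\xi_{r_0,\gamma}|v|^{4/(\gamma+2)})\,\d\mu$ is lower semicontinuous for narrow convergence (again by truncation and monotone convergence), so the a.e.\ bound extends to every $t\geq 0$. The only real obstacle is the unboundedness of the exponential test function, which prevents passing the weak limit under the integral directly; the two-level truncation (first at the $N'$-level, then at the $t$-level) handles this cleanly, so no new analytic difficulty arises beyond what is already encoded in Theorems \ref{thm exponential} and \ref{thm existence}.
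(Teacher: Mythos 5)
Your proposal is correct, but it takes a genuinely different route from the paper. The paper's proof establishes an equi-continuity estimate: it plugs $\vphi=\exp(\xi_{r_0,\gamma}|v|^{4/(\gamma+2)})$ directly into the $m=1$ BBGKY weak form \eqref{eq weak BBGKY} and obtains a uniform-in-$N$ Lipschitz-in-time bound for $g_N(t):=\int\exp(\xi_{r_0,\gamma}|v|^{4/(\gamma+2)})f_{N,1}(t,v)\,\d v$ (using that $f_{N,1}$ propagates exponential moments with a slightly larger parameter, so the $A:\nabla^2\vphi$ integrand is integrable). It then invokes Arzelà--Ascoli to pass to a limit uniformly in $t$. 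Your approach bypasses the equi-continuity/Arzelà--Ascoli step entirely: you first obtain the bound for a.e.~$t$ by the standard truncation plus Fatou/monotone convergence against the a.e.-$t$ narrow limit, then upgrade to all~$t$ via narrow continuity of $t\mapsto f_1(t)$ and lower semicontinuity of the exponential-moment functional.

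What each approach buys: the paper's Arzelà--Ascoli route yields the stronger statement that the moments of $f_{N',1}$ converge to those of $f_1$ \emph{uniformly} in time, and it works at the level of the well-defined particle laws $f_{N,1}(t)$, sidestepping the question of how to pick a good time-pointwise version of the $L^\infty_t L^1_v$ limit $f_1$. Your approach is more elementary (no compactness theorem needed) and has the virtue of making explicit the truncation/identification step that the paper's proof treats tersely (the paper asserts the Arzelà--Ascoli limit equals $\int\exp(\cdots)f_1(t)\,\d v$ without spelling out the identification, which in fact requires an argument of exactly the type you give). The one place you should be a bit more careful is the narrow continuity of $t\mapsto f_1(t)$: to get it from the weak form \eqref{eq weak Landau hierarchy} with $m=1$ you need the time integrand $\int A(v^1-v^2):\nabla^2\vphi\,f_2 + 2B\cdot\nabla\vphi\,f_2$ to be uniformly bounded in~$t$, i.e.~a uniform $(2+\gamma)$-moment bound on $f_2$. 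This does \emph{not} come for free from the Dunford--Pettis construction (which gives second moments only), and invoking the exponential-moment requirement in Definition~\ref{def Landau hierarchy} would be circular since that is precisely what this corollary establishes. The fix is what you implicitly use: the a.e.~exponential bound on $f_1$ just established, combined with the fact that $f_1$ is the one-particle marginal of $f_2$, gives $\int|v^1-v^2|^{2+\gamma}f_2 \lesssim \int|v^1|^{2+\gamma}f_1 + \int|v^2|^{2+\gamma}f_1 < \infty$ a.e., which is enough to get a Lipschitz-in-$t$ bound for $\int\vphi\,f_1$. You should make this step explicit to close the argument.
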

\begin{remark}
Notice that the order fulfils $\frac{4}{\gamma+2}\geq \frac{4}{3}$ as $\gamma\in(0,1]$. It is crucial to allow the order greater than $1$ to show the uniqueness of weak solutions of the Landau hierarchy. 
\end{remark}

We conclude this section by proving Corollary \ref{coro exponential}.
\begin{proof}[Proof of Corollary \ref{coro exponential}]
Notice that the estimate \eqref{ineq exponential} gives us the uniform-in-$N$ and uniform-in-time boundedness. We remain to prove the equi-continuity. Recall the weak form of BBGKY hierarchy \eqref{eq weak BBGKY} when $m=1$
$$
\begin{aligned}
\int_{\R^{3}}\vphi \&f_{N,1}(t)\d v^1-\int_{\R^{3}}\vphi f_{N,1}(s)\d v^1\\=\& \frac{N-1}{N} \int_s^t\int_{\R^{6}}  A(v^1-v^{2}):\nabla_{v^1v^1}^2\vphi\,f_{N,2}(r,v^1,v^2)\d v^1\d v^2\d r\\
\&+ \frac{2(N-1)}{N} \int_s^t\int_{\R^{6}}  B(v^1-v^2)\cdot \nabla_{v^{1}}\vphi\,f_{N,2}(r,v^1,v^2)\d v^1\d v^2\d r.
\end{aligned}
$$
We substitute $\vphi$ by $\exp(\xi_{r_0,\gamma}|v|^\frac{4}{\gamma+2})$ and obtain
$$
\left|\int_{\R^{3}}\exp(\xi_{r_0,\gamma}|v|^\frac{4}{\gamma+2}) f_{N,1}(t,v)\d v-\int_{\R^{3}}\exp(\xi_{r_0,\gamma}|v|^\frac{4}{\gamma+2}) f_{N,1}(s,v)\d v\right|\leq C|t-s|,
$$
where the constant $C$ on the right-hand side is independent of $s,t$ and $N$ because the marginal can propagate the exponential moment with the parameter $\xi$ sightly greater than $\xi_{r_0,\gamma}$. Then, by Arzelà–Ascoli theorem, there exists subsequence such that 
$$
\int_{\R^{3}}\exp(\xi_{r_0,\gamma}|v^1|^\frac{4}{2+\gamma})f_{N',1}(t,v^1)\d v^1\to \int_{\R^{3}}\exp(\xi_{r_0,\gamma}|v^1|^\frac{4}{2+\gamma})f_{1}(t,v^1)\d v^1<\infty,
$$
where the convergence is uniform-in-time.
\end{proof}
\section{Uniqueness of solutions of infinite Landau hierarchy}\label{sec uniqueness}

In this section, we establish the stability estimate for the infinite Landau hierarchy \eqref{eq Landau hierarchy}, as presented in Theorem \ref{thm uniqueness}. This estimate implies the uniqueness of weak solutions of the Landau hierarchy, as an important step toward proving propagation of chaos.

To derive this estimate, we construct two families of SDEs whose distributions solve the Landau hierarchy \eqref{eq Landau hierarchy} up to $n$-th level, starting from different initial data. By coupling the first $m$ components (with $m\leq n$) from each family, we obtain an $L^2$-estimate on trajectories in $\R^{3m}$ denoted by $u_m^{(n)}$, which controls the distance of two solutions of the Landau hierarchy. We emphasise the dependence of this quantity on $n$ constructed SDEs, which fulfils the consistency condition in Remark \ref{rmk consistency}. The quantity $u_m^{(n)}$ satisfies an inequality hierarchy involving $u_{m+1}^{(n)}$, as stated in Lemma \ref{lemma inequality hierarchy}. Using the exponential moment estimate \eqref{ineq fm exponential} and a combinatorial argument from \cite{lacker2023hierarchies}, we will obtain an estimate on $u_m^{(n)}$ (see Lemma \ref{lemma umn}) that is independent of higher-order terms in the hierarchy. 

\medskip

We begin by formulating suitable SDEs corresponding the Landau hierarchy \eqref{eq Landau hierarchy}:
\begin{itemize}
\item We fix some large $n\in\N$ and truncate the infinite hierarchy at $n$-th level.
\item Given two weak solutions $f_1$ and $\t f_1$ of the first equation of the infinite Landau hierarchy \eqref{eq Landau hierarchy}, with initial data $f^0$ and $\t f^0$ respectively, we construct two stochastic processes $(U^{n+1}_t)_{t\geq 0}$ and $(\t U^{n+1}_t)_{t\geq 0}$ on some filtered probability space such that $\law(U^{n+1})=f_1$, $\law(\t U^{n+1})=\t f_1$ and they form an optimal coupling in the sense that $$
\E\left[|U_t^{n+1}-\t U_t^{n+1}|^2\right]=\W_2^2(f_1(t),\t f_1(t)),\quad\text{for any}\,\, t\geq0.
$$
\item We construct $n$ independent copies of the optimal coupling pairs $(U_0^i,\t U_0^i)$, $i=1,2,\ldots,n$ such that $\law(U_0^i)=f^0$ and $\law(\t U_0^i)=\t f^0$, and denote
$$
u_0:=\W_2^2\left(f^0,\t f^0\right)=\E\left[|U_0^i-\t U_0^i|^2\right],\quad i=1,\ldots,n.
$$
We then define two collections of initial data: $\mathbf{U}_0=(U^1_0,\ldots, U^n_0)$ and $\mathbf{\t U}_0=(\t U^1_0,\ldots, \t U^n_0)$ so that $\law(\mathbf{U}_0)=(f^0)^{\otimes m}$ and $\law(\mathbf{U}_0)=(\t f^0)^{\otimes m}$.
\item  Given two sequences of weak solutions $(f_1,f_2,f_3,\ldots,f_n)$ and $(\t f_1,\t f_2,\t f_3,\ldots,\t f_n)$, the prepared initial data $\mathbf{U}_0=(U^1_0,\ldots, U^n_0)$ and $\mathbf{\t U}_0=(\t U^1_0,\ldots, \t U^n_0)$, and the constructed processes $(U^{n+1}_t)_{t\geq 0}$ and $(\t U^{n+1}_t)_{t\geq 0}$, we can define two systems of SDEs $\mathbf{U}=(U^1,U^2,\ldots,U^n)$ and $\mathbf{\t U}=(\t U^1,\t U^2,\ldots,\t U^n)$ which solve equations \eqref{eq SDE hierarchy} and \eqref{eq SDE hierarchy2}, respectively. The joint distributions within each family satisfy   $$\law(U^1,U^2,\ldots,U^n)=f_n,\quad \law(U^1,U^2,\ldots,U^n,U^{n+1})=f_{n+1},$$ $$\law(\t U^1,\t U^2,\ldots,\t U^n)=\t f_n,\quad \law(\t U^1,\t U^2,\ldots,\t U^n,\t U^{n+1})=\t f_{n+1}.$$
The solutions of these SDEs are understood in the sense of martingale problems; see Remark \ref{rmk martingale}.

\item There exists a common filtered probability space equipped with a shared Brownian motion  $\mathbf{W}=(W^1,W^2,\ldots,W^n)$ 
such that $(\Omega,\F,(\F_t)_{t\geq0},\mathbb{P}, \mathbf{U}, \mathbf{W})$ and $(\Omega,\F,(\F_t)_{t\geq0},\mathbb{P}, \mathbf{\t U}, \mathbf{W})$ are weak solutions of \eqref{eq SDE hierarchy} and \eqref{eq SDE hierarchy2}, respectively; see for example  \cite[Proposition 3.6]{li2019quantitative}.
\end{itemize}

Recall the vector $B$ in \eqref{def AB} and the diffusion matrix $\sigma$ 
$$
B(z)=-2z|z|^\gamma,\quad \sigma(z)=|z|^{1+\frac{\gamma}{2}}\Pi(z).
$$
The the SDE for $\mathbf{U}=(U^1,U^2,\ldots,U^n)\in\R^{3n}$ takes the form: 
\begin{align}\label{eq SDE hierarchy}
\&\displaystyle\d\left(\begin{array}{c}
     U_t^1  \\U_t^2  \\
    \vdots  \\U_t^{n-1}  \\
    U_t^n 
\end{array}\right)= 2\left(\begin{array}{c}
     B(U_t^1-U_t^{n+1})  \\B(U_t^2-U_t^{n+1})  \\
    \vdots  \\B(U_t^{n-1}-U_t^{n+1})  \\
    B(U_t^n-U_t^{n+1})
\end{array}\right)\d t\\\&\quad+\sqrt{2}\left(\begin{array}{ccccc}
     \sigma(U_t^1-U_t^{n+1})& 0& \cdots  & 0 & 0   \\
     0& \sigma(U_t^2-U_t^{n+1})& \cdots  & 0 & 0   \\
    \vdots& 0 & \ddots &  0 & \vdots \\
  0 &0&\cdots &\sigma(U_t^{n-1}-U_t^{n+1})& 0\\
  0 &0&\cdots &0& \sigma(U_t^n-U_t^{n+1})
\end{array}\right)\left(\begin{array}{c}
     \d W_t^1  \\\d W_t^2  \\
    \vdots  \\\d W_t^{n-1}  \\
    \d W_t^n 
\end{array}\right)\notag,
\end{align}
with the initial data $\mathbf{U_0}=(U^1_0,U^2_0,\ldots,U^n_0)$, and the SDE for $\mathbf{\t U}=(\t U^1,\t U^2,\ldots,\t U^n)$ takes the form:
\begin{align}\label{eq SDE hierarchy2}
\&\displaystyle\d\left(\begin{array}{c}
    \t U_t^1  \\\t U_t^2  \\
    \vdots  \\\t U_t^{n-1}  \\
    \t U_t^n 
\end{array}\right)= 2\left(\begin{array}{c}
     B(\t U_t^1-\t U_t^{n+1})  \\B(\t U_t^2-\t U_t^{n+1})  \\
    \vdots  \\B(\t U_t^{n-1}-\t U_t^{n+1})  \\
    B(\t U_t^n-\t U_t^{n+1})
\end{array}\right)\d t\\\&\quad+\sqrt{2}\left(\begin{array}{ccccc}
     \sigma(\t U_t^1-\t U_t^{n+1})& 0& \cdots  & 0 & 0   \\
     0& \sigma(\t U_t^2-\t U_t^{n+1})& \cdots  & 0 & 0   \\
    \vdots& 0 & \ddots &  0 & \vdots \\
  0 &0&\cdots &\sigma(\t U_t^{n-1}-\t U_t^{n+1})& 0\\
  0 &0&\cdots &0& \sigma(\t U_t^n-\t U_t^{n+1})
\end{array}\right)\left(\begin{array}{c}
     \d W_t^1  \\\d W_t^2  \\
    \vdots  \\\d W_t^{n-1}  \\
    \d W_t^n 
\end{array}\right)\notag.
\end{align}
One can verify by using It\^{o}'s formula that the SDE hierarchy formally corresponds to the Landau hierarchy \eqref{eq Landau hierarchy}:
$$
\p_t f_{n}=\sum_{i=1}^n\div_{v^i}\left[\int_{\R^3}\Big(A(v^i-v^{n+1})\nabla_{v^i}f_{n+1}-B(v^i-v^{n+1})f_{n+1}\Big)\d v^{n+1}\right].
$$
\begin{remark}\label{rmk martingale}
We understand the existence of solutions of the SDE hierarchy \eqref{eq SDE hierarchy} by the martingale problem. For example, for $\vphi\in C_b^2(\R^3)$, we define
$$
\begin{aligned}
\M_t^1:=\&\vphi(U_t^1)-\vphi(U_0^1)\\\&\qquad-2\int_0^t\nabla\vphi(U_s^1)\cdot B(U^1_s-U^{n+1}_s)\d s-\int_0^tA(U^1_s-U^{n+1}_s):\nabla^2\vphi(U^1_s)\d s,
\end{aligned}
$$
where for all $t\geq0$ it holds $\law(U_t^1)=f_1(t,v^1)$ and $\law(U_t^1,U_t^{n+1})=f_2(t,v^1,v^{n+1})$.
To show $\M_t$ is a square-integrable martingale, we take expectation to get
$$
\begin{aligned}
\E\left[\M^1_t\right]=\&\int_{\R^3}\vphi(v^1)f_1(t,v^1)\d v^1-\int_{\R^3}\vphi(v^1)f^0(v^1)\d v^1\\\&-2\int_0^t\int_{\R^6}\nabla\vphi(v^1)\cdot B(v^1-v^{n+1})f_2(s,v^1,v^{n+1})\d s\\\&+\int_0^t\int_{\R^6}A(v^1-v^{n+1}):\nabla^2\vphi(v_1)f_2(s,v^1,v^{n+1})\d s.
\end{aligned}
$$
Since Definition \ref{def Landau hierarchy} (the weak formula \eqref{eq weak Landau hierarchy}) and the existence result Proposition \ref{thm existence}, the right-hand side equals to $0$ and we obtain 
$$
\E[\M^1_t]=0,\quad\forall t\geq0.
$$
Also, applying the moment estimate \eqref{ineq fm exponential}, we have
$$
\begin{aligned}
\E[|\M^1_t|^2]=\&\E\left[\int_0^t\nabla\vphi(U^1_s)\sigma(U^1_s-U^{n+1}_s)\sigma^T(U^1_s-U^{n+1}_s)(\nabla\vphi(U^1_s))^T\d s\right]
\\=\&\int_0^t\int_{\R^{6}}A(v^1-v^{n+1}):(\nabla\vphi(v^1))^{\otimes2}f_2(s,v^1,v^{m+1})\d s\d v^1\d v^{m+1}\\
\leq\&\|\nabla\vphi\|^2_{L^\infty}\int_0^t\int_{\R^{6}}|v^1-v^{m+1}|^{2+\gamma}f_2(s,v^1,v^{m+1})\d s\d v^1\d v^{m+1}<\infty.  
\end{aligned}
$$
Then $\M^1$ is a square-integrable martingale, and there exists a suitable filtered probability space and a 3-dimension Brownian motion $(W^1_t)_{t\geq0}$ such that 
$$
\d \M^1_t=\nabla\vphi(U_t^1)\sigma(U_t^1-U_t^{n+1})\d W_t^1.
$$
We deduce that there exists a weak solution $(\Omega,(\F_t)_{t\geq 0}, \mathbb{P}, U^1,W^1)$ of the SDE
$$
\d U^1_t= 2B(U^1_t-U^{n+1}_t)\d t+\sqrt{2}\sigma(U^1_t-U^{n+1}_t)\d W_t^1.
$$
In the spirit, we can show that there exits a filtered probability space $(\Omega,\F,(\F_t)_{t\geq 0}, \mathbb{P}, \mathbf{U},\mathbf{W})$ with $\mathbf{U}=(U_1,U_2,\ldots,U_n)$  and Brownian motion  $\mathbf{W}=(W_1,W_2,\ldots,W_n)$ satisfied the SDE \eqref{eq SDE hierarchy}.  
\end{remark}

After the preparation, we can compute the difference between the first $\ell$ ($\ell\leq n$) components in two families, namely $(U^1,U^2,\ldots,U^\ell)$  and $(\t U^1,\t U^2,\ldots,\t U^\ell)$, thanks to the same Brownian. The difference SDE on $\R^{3\ell}$ reads
$$
\begin{aligned}
\displaystyle\&\d\left(\begin{array}{c}
     U_t^1-\t U_t^1   \\
    \vdots    \\
    U_t^\ell-\t U_t^\ell 
\end{array}\right)= 2\left(\begin{array}{c}
     B(U_t^1-U_t^{n+1})-B(\t U_t^1-\t U_t^{n+1})    \\
    \vdots    \\
    B( U_t^\ell- U_t^{n+1})-B(\t U_t^\ell-\t U_t^{n+1})
\end{array}\right)\d t\\\&\quad+\sqrt{2}\left(\begin{array}{ccc}
     \sigma(U_t^1-U_t^{n+1})-\sigma(\t U_t^1-\t U_t^{n+1})\,\,& \cdots  & 0   \\
    \vdots& \ddots  & \vdots \\
  0 &\cdots & \,\,\sigma(U_t^\ell-U_t^{n+1})-\sigma(\t U_t^\ell-\t U_t^{n+1})
\end{array}\right)\left(\begin{array}{c}
     \d W_t^1   \\
    \vdots    \\
    \d W_t^\ell 
\end{array}\right).
\end{aligned}
$$
For $\ell\leq n$, we denote the $L^2$-distance by
$$
u_\ell^{(n)}(t):=\sum_{i=1}^\ell\E\left[\left|U_t^i-\t U_t^i\right|^2\right].
$$

\begin{remark}[Consistency]\label{rmk consistency} For each different $n$, we will construct different families of SDEs, which are not necessarily the same. However, for any fixed $m$, the first $m$ SDEs in different families share the same distribution $\law(U_t^1,\ldots,U_t^m)=f_m(t)$. The distance of two weak solutions of the Landau hierarchy can be bounded such as for any $n$, 
\begin{equation}\label{ineq consistency3}
 \W_2^2\left(f_m(t),\t f_m(t)\right)\leq u_m^{(n)} (t), \quad t\geq 0.  
\end{equation}
Under Assumption \ref{assumption hard kac}, the constructed $L^2$-distance satisfies the direct bound for any $1\leq \ell \leq n$, 
\begin{equation}\label{ineq trivial bound}
\begin{aligned}
\sup_{t\in[0,\infty)}  u_\ell^{(n)}(t)=\&\sup_{t\in[0,\infty)}\sum_{i=1}^\ell\E\left[\left|U_t^i-\t U_t^i\right|^2\right]\leq \sup_{t\in[0,\infty)}\sum_{i=1}^\ell\E\left[\left|U_t^i|^2+|\t U_t^i\right|^2\right]\\
\leq\&\sup_{t\in[0,\infty)}2\ell\int_{\R^{3}}|v|^2\big(f_1(t,v)+\t f_1(t,v)\big)\d v\leq 2\ell r_0^2.
\end{aligned}
\end{equation} 
Let the truncated $n$ levels hierarchy be fixed. Then the processes $U^{n+1}$ and $\t U^{n+1}$ are constructed to be the optimal coupling. Though the pair $(U_t^{m+1},\t U_t^{m+1})$ and $(U_t^{n+1},\t U_t^{n+1})$ have the same marginal distribution, it has the inequality
\begin{equation}\label{ineq consistency2}
\E\left[|U_t^{n+1}-\t U_t^{n+1}|^2\right]=\W_2^2(f_1(t),\t f_1(t))\leq \E\left[|U_t^{m+1}-\t U_t^{m+1}|^2\right]=u_{m+1}^{(n)}(t)-u_m^{(n)}(t).
\end{equation}
Also, the initial $L^2$-distance satisfies the identity
\begin{equation}\label{ineq consistency1}
u_\ell^{(n)}(0):=\sum_{i=1}^\ell\E\left[\left|U_0^i-\t U_0^i\right|^2\right]=\ell u_0=\ell \W_2(f^0,\t f^0).
\end{equation}
\end{remark}

Let the truncated $n$-level hierarchy be fixed. The constructed $L^2$-distance satisfies the following ordinary differential inequality.
\begin{lemma}\label{lemma inequality hierarchy}
For any fixed $m\in \N$, the following inequality holds, for 
$t_m\in[0,T]$
\begin{equation}\label{ineq simplified}
\begin{aligned}
u_m^{(n)}(t_m)
\leq u_m^{(n)}(0)+a\int_0^{t_m}\&\left[mu^{(n)}_{m+1}(t_{m+1})-(m-1)u^{(n)}_m(t_{m+1})\right]\d t_{m+1}\\ \&+\frac{C_2Tm}{\exp [C_1a^{4/(\gamma^2+2\gamma)}]},
\end{aligned}    
\end{equation}
where $a$ is a cut-off independent of $m,n,t$ which will be chosen later and the constants $C_1$ and $C_2$ are independent of $m$, $T$ and $a$. 
\end{lemma}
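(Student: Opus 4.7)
The plan is to apply It\^{o}'s formula to each $|U_t^i-\t U_t^i|^2$ and then bound the resulting drift via a cut-off that separates local Lipschitz behaviour from a tail controlled by the exponential moment estimate. Writing $z^i := U^i - U^{n+1}$ and $\t z^i := \t U^i - \t U^{n+1}$, It\^{o}'s formula applied to $|U^i_t-\t U^i_t|^2$, followed by taking expectations and summing over $i=1,\ldots,m$, yields
\begin{equation*}
u_m^{(n)}(t_m) - u_m^{(n)}(0) = \int_0^{t_m}\sum_{i=1}^m \E\Big[4(U_s^i-\t U_s^i)\cdot\bigl(B(z^i_s)-B(\t z^i_s)\bigr) + 2\|\sigma(z^i_s)-\sigma(\t z^i_s)\|_F^2\Big]\d s,
\end{equation*}
where the stochastic integrals are genuine martingales because the moment bound \eqref{ineq fm exponential} applied to $f_2$ controls $\E[|z^i_s|^{\gamma+2}]$.

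Since $\nabla B(z)=O(|z|^\gamma)$ and $\nabla\sigma(z)=O(|z|^{\gamma/2})$ both blow up at infinity, I split each integrand according to whether both $|z^i_s|\leq R$ and $|\t z^i_s|\leq R$ hold, where $R:=a^{1/\gamma}$. On this truncated (good) region the mean-value theorem gives $|B(z)-B(\t z)|\leq CR^\gamma|z-\t z|$ and $\|\sigma(z)-\sigma(\t z)\|_F^2\leq CR^\gamma|z-\t z|^2$. Using $z^i-\t z^i = (U^i-\t U^i)-(U^{n+1}-\t U^{n+1})$, together with Cauchy-Schwarz and $|x-y|^2\leq 2|x|^2+2|y|^2$, the integrand is dominated by $Ca\bigl(|U^i_s-\t U^i_s|^2 + |U^{n+1}_s-\t U^{n+1}_s|^2\bigr)$. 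Summing over $i$ and invoking the consistency inequality \eqref{ineq consistency2} to replace $\E|U^{n+1}_s-\t U^{n+1}_s|^2$ by $u_{m+1}^{(n)}(s)-u_m^{(n)}(s)$, this contributes exactly $Ca\bigl[m u_{m+1}^{(n)}(s) - (m-1)u_m^{(n)}(s)\bigr]$, after absorbing the universal constant into the meaning of $a$.

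On the tail region I use the crude pointwise bounds $|B(z)-B(\t z)|\leq 2(|z|^{\gamma+1}+|\t z|^{\gamma+1})$ and $\|\sigma(z)-\sigma(\t z)\|_F^2\leq C(|z|^{\gamma+2}+|\t z|^{\gamma+2})$, combined with the Markov-type inequality $\mathbf{1}_{|z|>R}\leq \exp\bigl(\xi_{r_0,\gamma}(|z|^{4/(\gamma+2)}-R^{4/(\gamma+2)})\bigr)$ for the constant $\xi_{r_0,\gamma}$ from Corollary \ref{coro exponential}. The choice $R=a^{1/\gamma}$ gives $R^{4/(\gamma+2)} = a^{4/(\gamma^2+2\gamma)}$, so the factor $\exp(-\xi_{r_0,\gamma}a^{4/(\gamma^2+2\gamma)})$ pulls out of the expectation and the remaining quantities of the form $\E\bigl[(1+|z|)^{\gamma+2}\exp(\xi_{r_0,\gamma}|z|^{4/(\gamma+2)})\bigr]$ are uniformly bounded in $s$ by \eqref{ineq fm exponential} applied to $f_2$ with a slightly enlarged exponent (still available by shrinking $\xi_{r_0,\gamma}$). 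Summing over $i$ and integrating on $[0,t_m]\subset[0,T]$ produces the claimed tail term $C_2Tm/\exp(C_1 a^{4/(\gamma^2+2\gamma)})$, establishing \eqref{ineq simplified}.

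The main obstacle is the lack of global Lipschitz regularity of $B$ and $\sigma$, which simultaneously forces the cut-off and pins down the exponent $4/(\gamma^2+2\gamma)$ through the relation between the cut-off scale $R=a^{1/\gamma}$ and the exponential moment order $4/(\gamma+2)$. The bookkeeping must also be arranged so that the coefficient of $u_{m+1}^{(n)}$ is only linear in $m$, since this feature is precisely what allows the subsequent combinatorial iteration in the proof of Theorem \ref{thm uniqueness} to close the stability estimate uniformly in the truncation level $n$.
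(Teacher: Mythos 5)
Your proof follows essentially the same strategy as the paper: apply It\^{o}'s formula to the coupled difference processes, exploit the polynomial blow-up of $\nabla B$ and $\nabla\sigma$ to get a local Lipschitz bound below a cut-off scale $R\sim a^{1/\gamma}$, absorb that contribution into the linear-in-$m$ hierarchy term via the consistency inequality \eqref{ineq consistency2}, and relegate the unbounded region to a tail controlled by the exponential moment through a Markov-type inequality, producing the factor $\exp(-C_1 a^{4/(\gamma^2+2\gamma)})$. The arithmetic $R^{4/(\gamma+2)}=a^{4/(\gamma^2+2\gamma)}$ and the combinatorics $u_m+m(u_{m+1}-u_m)=mu_{m+1}-(m-1)u_m$ are both correct.

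The only place where your sketch is looser than the paper's argument is the tail region. The paper first reorganises the It\^{o} drift into the form $C_0\left(|U^i-\t U^i|^2+|U^{n+1}-\t U^{n+1}|^2\right)\left(|z^i|^\gamma+|\t z^i|^\gamma\right)$ \emph{before} splitting by the indicator, so that on the bad event the factor $|U^i-\t U^i|^2+|U^{n+1}-\t U^{n+1}|^2$ survives intact and is then dominated, after expanding with the triangle inequality, by polynomials times $\exp(\xi|\cdot|^{4/(\gamma+2)})$ with $\xi<\xi_{r_0,\gamma}$, all uniformly integrable by Corollary \ref{coro exponential}. Your "crude pointwise bounds" drop the $|U^i-\t U^i|$ factor on the $B$-term without explaining how it is reabsorbed; the quantity you claim controls the tail, $\E\!\left[(1+|z|)^{\gamma+2}\exp(\xi|z|^{4/(\gamma+2)})\right]$, does not visibly contain it, and $|U^i-\t U^i|$ is not dominated by $|z^i|$ or $|\t z^i|$. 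This is repairable exactly as in the paper (bound $|U^i-\t U^i|\le|U^i|+|\t U^i|$, open $|z^i|^{4/(\gamma+2)}\lesssim |U^i|^{4/(\gamma+2)}+|U^{n+1}|^{4/(\gamma+2)}$ since $4/(\gamma+2)>1$, then invoke the moment bound with a shrunk $\xi$), but as written this step is a gap in the bookkeeping rather than a difference in method.
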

\begin{proof}[Proof of Lemma \ref{lemma inequality hierarchy}]

For any $T\geq0$, It\^{o}'s formula implies
\begin{equation}\label{eq ito2}
\begin{aligned}
\sum_{i=1}^m\E\left[\left|U_T^i-\t U_T^i\right|^2\right]\& =\sum_{i=1}^m\E\left[\left|U_0^i-\t U_0^i\right|^2\right]\\\&+2\sum_{i=1}^m\int_0^T\E\left[\left(U_t^i-\t U_t^i\right)\cdot \left(B(U_t^i-U_t^{n+1})-B(\t U_t^i-\t U_t^{n+1})\right)\right] \d t
\\\&+\sum_{i=1}^m\int_0^T\E\left[\left\|\sigma(U_t^i-U_t^{n+1})-\sigma(\t U_t^i-\t U_t^{n+1})\right\|^2\right] \d t.
\end{aligned}
\end{equation}
Recall the following lemma, for example \cite[Lemma 8]{fournier2017kac},
\begin{lemma}\label{ineq b sigma} 
For any $\gamma\in(0,1]$, and any $x,y\in\R^3$
$$
|B(x)-B(y)|\leq C_B|x-y|(|x|^\gamma+|y|^\gamma),
$$
and 
$$
\|\sigma(x)-\sigma(y)\|\leq C_\sigma|x-y|(|x|^{\gamma/2}+|y|^{\gamma/2}).
$$
\end{lemma}
We get the estimate of \eqref{eq ito2} for some uniform-in-time constant $C_0$ and a cut-off radius $a>0$ such as
\begin{equation*}\label{ineq ito}
\begin{aligned}
\sum_{i=1}^m\E\& \left[\left|U_T^i-\t U_T^i\right|^2\right]-\sum_{i=1}^m\E\left[\left|U_0^i-\t U_0^i\right|^2\right]\\\leq\&2C_B\sum_{i=1}^m\int_0^T\E\bigg[\left(\left|U_t^i-\t U_t^i\right|^2+\left|U_t^i-\t U_t^i\right|\left|U_t^{n+1}-\t U_t^{n+1}\right|\right)\\\&\qquad\qquad\qquad\qquad\qquad\qquad\qquad\times\left(|U_t^i-U_t^{n+1}|^\gamma+|\t U_t^i-\t U_t^{n+1}|^\gamma\right)\bigg] \d t
\\\&+4C_\sigma^2\sum_{i=1}^m\int_0^T\E\bigg[\left(\left|U_t^i-\t U_t^i\right|^2+\left|U_t^{n+1}-\t U_t^{n+1}\right|^2\right)\\\&\qquad\qquad\qquad\qquad\qquad\qquad\qquad\times\left(|U_t^i-U_t^{n+1}|^\gamma+|\t U_t^i-\t U_t^{n+1}|^\gamma\right)\bigg] \d t
\\\leq\&C_0\sum_{i=1}^m\int_0^T\E\bigg[\left(\left|U_t^i-\t U_t^i\right|^2+\left|U_t^{n+1}-\t U_t^{n+1}\right|^2\right)\\\&\qquad\qquad\qquad\qquad\qquad\times|U_t^i-U_t^{n+1}|^\gamma\left(I_{|U_t^i-U_t^{n+1}|< \left(\frac{a}{2C_0}\right)^{1/\gamma}}+I_{|U_t^i-U_t^{n+1}|\geq \left(\frac{a}{2C_0}\right)^{1/\gamma}}\right)\bigg] \d t
\\\&+C_0\sum_{i=1}^m\int_0^T\E\bigg[\left(\left|U_t^i-\t U_t^i\right|^2+\left|U_t^{n+1}-\t U_t^{n+1}\right|^2\right)\\\&\qquad\qquad\qquad\times|\t U_t^i-\t U_t^{n+1}|^\gamma\left(I_{|\t U_t^i-\t U_t^{n+1}|< \left(\frac{a}{2C_0}\right)^{1/\gamma}}+I_{|\t U_t^i-\t U_t^{n+1}|\geq \left(\frac{a}{2C_0}\right)^{1/\gamma}}\right)\bigg] \d t.
\end{aligned}
\end{equation*}
By using the inequality for some $R>0$
$$
\begin{aligned}
|x-y|^\gamma I_{|x-y|\geq 2R}\leq\& (|x|^\gamma+|y|^\gamma) \left(I_{|x|\geq R}+ I_{|y|\geq R} \right)\\\leq\& (|x|^\gamma+|y|^\gamma) \left(\frac{\exp( \xi |x|^\frac{4}{\gamma+2})}{\exp( \xi R^\frac{4}{\gamma+2})}+ \frac{\exp( \xi |y|^\frac{4}{\gamma+2})}{\exp( \xi R^\frac{4}{\gamma+2})} \right),
\end{aligned}
$$
where we take $R=\frac{1}{2}\left(\frac{a}{2C_0}\right)^{1/\gamma}$ and $0<\xi<\xi_{r_0,\gamma}$ with $\xi_{r_0,\gamma}$ defined in Theorem \ref{thm exponential}, 
we obtain

\begin{equation*}
\begin{aligned}
\sum_{i=1}^m\& \E\left[\left|U_T^i-\t U_T^i\right|^2\right]-\sum_{i=1}^m\E\left[\left|U_0^i-\t U_0^i\right|^2\right]
\\\leq\&a \sum_{i=1}^m\int_0^T\E\bigg[\left|U_t^i-\t U_t^i\right|^2+\left|U_t^{n+1}-\t U_t^{n+1}\right|^2\bigg] \d t
\\\&+\frac{C_0}{\exp [C_1a^{4/(\gamma^2+2\gamma)}]}\sum_{i=1}^m\int_0^T\E\Bigg[\left(\left|U_t^i-\t U_t^i\right|^2+\left|U_t^{n+1}-\t U_t^{n+1}\right|^2\right)\\\&\qquad\qquad\qquad\times\Bigg(\left(|U_t^i|^\gamma+|U_t^{n+1}|^\gamma\right)\left(\exp( \xi  |U_t^i|^\frac{4}{\gamma+2})+ \exp( \xi  |U_t^{n+1}|^\frac{4}{\gamma+2}) \right)\\\&\qquad\qquad\qquad\qquad+\left(|\t U_t^i|^\gamma+|\t U_t^{n+1}|^\gamma\right)\left(\exp( \xi  |\t U_t^i|^\frac{4}{\gamma+2})+ \exp( \xi  |\t U_t^{n+1}|^\frac{4}{\gamma+2}) \right)\Bigg)\Bigg] \d t\\\leq\&a \sum_{i=1}^m\int_0^T\E\bigg[\left|U_t^i-\t U_t^i\right|^2+\left|U_t^{n+1}-\t U_t^{n+1}\right|^2\bigg] \d t+\frac{mTC_2(r_0,\gamma) }{\exp[C_1a^{4/(\gamma^2+2\gamma)}]},
\end{aligned}
\end{equation*}
where the last step is due to $\xi <\xi_{r_0,\gamma}$ and the uniform-in-time propagation of exponential moment for both $f_1$ and $\t f_1$. We estimate the first term on the right-hand side by using \eqref{ineq consistency2}:
$$
\sum_{i=1}^m\E\bigg[\left|U_t^i-\t U_t^i\right|^2+\left|U_t^{n+1}-\t U_t^{n+1}\right|^2\bigg] \leq u_m^{(n)}(t)+m\left(u_{m+1}^{(n)}(t)-u_m^{(n)}(t)\right),
$$   
which concludes the desired inequality.
\end{proof}

We remain to analyse the inequality hierarchy \eqref{ineq simplified}, which will follow the idea in \cite{lacker2023hierarchies}. By the Gronwall's inequality, we deduce that  
\begin{equation}\label{ineq umt}
\begin{aligned}
u_m^{(n)} (t_m)
\leq \& e^{-a(m -1)t_m}u_m^{(n)} (0)\\\&+\int_0^{t_m}e^{-a(m -1)(t_m-t_{m +1})}\left(\frac{C_2mt_m }{\exp[C_1a^{4/(\gamma^2+2\gamma)}]}+am u_{m +1}^{(n)}(t_{m +1})\right)\d t_{m +1}.
\end{aligned}
\end{equation}
Similarly, we evaluate the quantity involving the first $m+1$ SDEs in two constructed families SDEs respectively, namely,
$$
u_{m+1}^{(n)}(t)=\sum_{i=1}^{m+1}\E\left[\left|U_t^i-\t U_t^i\right|^2\right],
$$
and we get the inequality satisfied by $u_{m+1}^{(n)}$
\begin{equation}\label{ineq um+1t}
\begin{aligned}
u_{m+1}^{(n)}\&(t_{m +1})
\leq  e^{-am t_{m +1}}u_{m +1}^{(n)}(0)\\\&+\int_0^{t_{m +1}}e^{-am (t_{m +1}-t_{m +2})}\left(\frac{C_2t_{m+1}(m +1)}{\exp[C_1a^{4/(\gamma^2+2\gamma)}]}+a(m +1)u_{m +2}^{(n)}(t_{m +2})\right)\d t_{m +2}.
\end{aligned}
\end{equation}
For $0\leq t_{m+1}\leq t_m\leq T$, we plug \eqref{ineq um+1t}
into \eqref{ineq umt} to get 
\begin{align}\label{ineq um+1}
\notag u_m^{(n)} \&(t_m )\leq  u_m^{(n)} (0)e^{-a(m -1)t_m }+u_{m +1}^{(n)}(0)am \int_0^{t_m }e^{-a(m -1)(t_m -t_{m +1})-am t_{m +1}}\d t_{m +1}\\\&+\frac{C_2Tam }{a\exp[C_1a^{4/(\gamma^2+2\gamma)}]}\int_0^{t_m }e^{-a(m -1)(t_m -t_{m +1})}\d t_{m +1}\\\&\notag+\frac{C_2Ta^2m (m +1)}{a\exp[C_1a^{4/(\gamma^2+2\gamma)}]}\int_0^{t_m }\int_0^{t_{m +1}}e^{-a(m -1)(t_m -t_{m +1})-am (t_{m +1}-t_{m +2})}\d t_{m +2}\d t_{m +1}\\\notag\&+a^2m (m +1)\int_0^{t_m }\int_0^{t_{m +1}}e^{-a(m -1)(t_m -t_{m +1})-am (t_{m +1}-t_{m +2})}u_{m +2}(t_{m +2})\d t_{m +2}\d t_{m +1}.
\end{align}
By iteration, we can show the following estimate.
\begin{lemma}
Let $0\leq t_{\ell}\leq \cdots\leq t_{m+1}\leq t_m\leq T$, for $m\leq\ell\leq n-1$, we define
$$
\begin{aligned}
\t{F}_m ^\ell(t_m ):=\&\left(\prod_{j=m }^{\ell}aj\right)\int_0^{t_m }\int_0^{t_{m +1}}\cdots\int_0^{t_{\ell}}e^{-a\sum_{j=m }^{\ell}(j-1)(t_j-t_{j+1})}\d t_{\ell+1}\cdots\d t_{m +1}.
\end{aligned}
$$
For $m+1\leq\ell\leq n-1$, we define
$$
\t{G}_m ^\ell(t_m ):=\left(\prod_{j=m }^{\ell-1}aj\right)\int_0^{t_m }\int_0^{t_{m +1}}\cdots\int_0^{t_{\ell-1}}e^{-a\sum_{j=m }^{\ell-1}(j-1)(t_j-t_{j+1})-a(\ell-1)t_{\ell}}\d t_{\ell}\cdots\d t_{m +1},
$$
and $\t G_m ^m (t_m)=e^{-a(m-1)t_m}$.
Then the $L^2$-distance satisfies the estimate as follows:
   $$
\begin{aligned}
u_m^{(n)} (T)\leq\& \sum_{\ell=m }^{n-1}\left[u_\ell^{(n)}(0)\t{G}_m ^\ell(T)+\frac{C_2T}{a\exp[C_1a^{4/(\gamma^2+2\gamma)}]}\t{F}_m ^\ell(T)\right]+ \t{F}_m ^{n-1}(T)\sup_{t\in[0,T]}u_n^{(n)}(t).
\end{aligned}
$$   

\end{lemma}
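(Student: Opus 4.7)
The strategy is to iterate the single-step Gronwall estimate \eqref{ineq umt} across the levels $m, m+1, \ldots, n-1$ and to recognise the cumulated terms as the functions $\t{G}_m^\ell$ and $\t{F}_m^\ell$ appearing in the statement. Applied at level $\ell$ with running time variable $t_\ell$, the Gronwall bound reads
\[
u_\ell^{(n)}(t_\ell) \leq e^{-a(\ell-1)t_\ell} u_\ell^{(n)}(0) + \int_0^{t_\ell} e^{-a(\ell-1)(t_\ell-t_{\ell+1})}\left(\tfrac{C_2 T \ell}{\exp[C_1 a^{4/(\gamma^2+2\gamma)}]} + a \ell\, u_{\ell+1}^{(n)}(t_{\ell+1})\right)\d t_{\ell+1},
\]
and substituting this bound into the innermost integral of the current estimate for $u_m^{(n)}(T)$ releases three new contributions at each stage: a fresh initial-data term, a fresh forcing term, and a deeper nested remainder containing $u_{\ell+1}^{(n)}$. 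Starting from the already-displayed \eqref{ineq um+1} (which corresponds to one substitution past the base case) and iterating this procedure for $\ell = m+1, m+2, \ldots, n-1$ produces the stated decomposition.

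After the substitution at level $\ell$, the prefactor of $u_\ell^{(n)}(0)$ has accumulated to $\prod_{j=m}^{\ell-1} aj$, since exactly one factor $aj$ is introduced at each preceding substitution, and the kernel multiplying it is exactly $e^{-a(\ell-1)t_\ell}\prod_{j=m}^{\ell-1} e^{-a(j-1)(t_j-t_{j+1})}$, which matches the integrand in the definition of $\t{G}_m^\ell(T)$. The same substitution releases a forcing contribution whose coefficient is $\prod_{j=m}^{\ell-1}(aj)\cdot \ell$ (the final factor $\ell$ coming from the $C_2 T \ell/\exp[\cdots]$ appearing in the forcing at level $\ell$), so after pulling out the constant $\tfrac{C_2 T}{a\exp[C_1 a^{4/(\gamma^2+2\gamma)}]}$ the remaining iterated integral has prefactor $\prod_{j=m}^\ell aj$ and kernel $\prod_{j=m}^\ell e^{-a(j-1)(t_j-t_{j+1})}$, i.e.\ is exactly $\t{F}_m^\ell(T)$. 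The iteration terminates at level $n$: the surviving remainder is an iterated integral of $u_n^{(n)}(t_n)$ against the kernel of $\t{F}_m^{n-1}(T)$, and bounding $u_n^{(n)}(t_n)\leq \sup_{t\in[0,T]}u_n^{(n)}(t)$ and factoring this constant out of the integrals produces $\t{F}_m^{n-1}(T)\sup_{t\in[0,T]}u_n^{(n)}(t)$.

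A clean way to formalise this is finite induction on $\ell$ from $m$ up to $n-1$, establishing at each stage the partial bound
\[
u_m^{(n)}(T) \leq \sum_{k=m}^{\ell}\left[u_k^{(n)}(0)\t{G}_m^k(T) + \tfrac{C_2 T}{a\exp[C_1 a^{4/(\gamma^2+2\gamma)}]}\t{F}_m^k(T)\right] + R_\ell(T),
\]
where $R_\ell(T)$ is a nested iterated integral of $u_{\ell+1}^{(n)}$ against the kernel of $\t{F}_m^\ell(T)$; the inductive step amounts to applying the Gronwall estimate at level $\ell+1$ inside $R_\ell$, and setting $\ell = n-1$ together with the supremum bound on $u_n^{(n)}$ closes the argument. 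The main obstacle is purely combinatorial bookkeeping: one must verify at each substitution that the running product of coefficients grows by the correct factor $a\cdot(\text{current level})$ and that exactly one new exponential weight and one new time integral are appended in agreement with the definitions of $\t{G}_m^\ell$ and $\t{F}_m^\ell$. No new analytic ingredient beyond the single-step Gronwall bound and the trivial inequality $t_\ell \leq T$ (used to upper bound the forcing) is required.
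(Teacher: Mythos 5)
Your proposal is correct and follows essentially the same route the paper takes: the paper states the lemma follows "by iteration" from the single-step Gronwall bound \eqref{ineq umt}, and your finite induction with the bookkeeping identities $\prod_{j=m}^{\ell-1}aj$ for the $\t G$-terms, $\tfrac{C_2T}{a}\prod_{j=m}^{\ell}aj = C_2T\ell\prod_{j=m}^{\ell-1}aj$ for the $\t F$-terms, and the supremum bound on $u_n^{(n)}$ to close the remainder is exactly the iteration the paper has in mind (the one-substitution case being \eqref{ineq um+1}).
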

We further define that for $m\leq\ell\leq n-1$, $F_m ^\ell$ reads as 
$$
F_m ^\ell(t_m ):=\left(\prod_{j=m }^{\ell}aj\right)\int_0^{t_m }\int_0^{t_{m +1}}\cdots\int_0^{t_{\ell}}e^{-a\sum_{j=m }^{\ell}j(t_j-t_{j+1})}\d t_{\ell+1}\cdots\d t_{m +1};
$$
and for $m+1\leq\ell\leq n-1$, $G_m ^\ell$ reads as
$$
G_m ^\ell(t_m ):=\left(\prod_{j=m }^{\ell-1}aj\right)\int_0^{t_m }\int_0^{t_{m +1}}\cdots\int_0^{t_{\ell-1}}e^{-a\ell t_{\ell}-a\sum_{j=m }^{\ell-1}j(t_j-t_{j+1})}\d t_{\ell}\cdots\d t_{m +1},
$$
with $G_m ^m (t_m)=e^{-amt_m}$. Then for $m\leq \ell \leq n-1$, it holds
$$
\t{F}_m ^\ell(t_m )\leq e^{at_m }F_m ^\ell(t_m ),
$$
and
$$
\t G_m ^\ell(t_m )= e^{at_m }G_m ^\ell(t_m ).
$$
Notice that $F_m^\ell$ and $G_m^\ell$ satisfy the estimates below.
\begin{lemma}[{\cite[Lemma 4.8, Proposition 5.1]{lacker2023hierarchies}}]\label{lemma combi}
\begin{equation}\label{ineq F1}
   F_m^{n-1}(t)\leq \exp\left(-2n\left(e^{-aT}-\frac{m}{n}\right)_+^2\right),  
\end{equation}
\begin{equation}\label{ineq F2}
  \sum_{\ell=m}^\infty F_m^\ell(t)\leq m(\exp(at)-1),
\end{equation} 
 and
\begin{equation}\label{ineq G1}\sum_{\ell=m}^\infty\ell G_m^\ell(t)\leq ma\exp(at).  \end{equation} 
\end{lemma}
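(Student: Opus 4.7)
The plan is to reinterpret the integrals defining $F_m^\ell$ and $G_m^\ell$ in terms of a pure-birth (Yule-type) process. Introduce a continuous-time Markov chain $(X_t)_{t\geq 0}$ on $\{m,m+1,\ldots\}$ with $X_0=m$ whose transition rate from state $j$ to $j+1$ equals $aj$, and let $T_{m+1}<T_{m+2}<\cdots$ be its jump times, with $T_{j+1}-T_j$ exponential of rate $aj$ and mutually independent. A direct change of variables (replacing the simplex variables by $u_j:=t_m-t_j$ and recognising the joint density of the successive exponential waiting times) identifies
\begin{equation*}
F_m^\ell(t)=\mathrm{Pr}(X_t\geq \ell+1),\qquad G_m^\ell(t)=\mathrm{Pr}(X_t=\ell).
\end{equation*}
This is the dictionary that drives the whole proof: one can verify the end cases $F_m^m(t)=1-e^{-amt}$ and $G_m^m(t)=e^{-amt}$ directly, which matches $\mathrm{Pr}(X_t\geq m+1)$ and $\mathrm{Pr}(X_t=m)$ respectively.

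Given this dictionary, \eqref{ineq F2} and \eqref{ineq G1} reduce to moment identities for $(X_t)$. Since $(X_t)$ is a linear birth process with rate $a$, equivalently the superposition of $m$ independent unit Yule processes of rate $a$, one has the classical formula $\E[X_t]=me^{at}$. Summation by parts then gives
\begin{equation*}
\sum_{\ell=m}^\infty F_m^\ell(t)=\sum_{\ell=m}^\infty\mathrm{Pr}(X_t\geq \ell+1)=\E[X_t]-m=m(e^{at}-1),
\end{equation*}
which is \eqref{ineq F2} (in fact with equality), and similarly $\sum_{\ell=m}^\infty \ell\, G_m^\ell(t)=\E[X_t]=me^{at}\leq ma\,e^{at}$, which is \eqref{ineq G1} under the mild convention $a\geq 1$ (the cutoff $a$ is ultimately chosen large in the iteration argument, so this is harmless).

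The main obstacle is the concentration estimate \eqref{ineq F1}, which cannot be obtained from a moment identity alone. Here I would exploit the Yule superposition once more: $X_t-m$ is a sum of $m$ i.i.d.\ geometric random variables with success parameter $p:=e^{-at}$. By the standard negative-binomial/binomial duality, the event $\{X_t\geq n\}$ coincides with the event that strictly fewer than $m$ successes appear in $n-1$ independent Bernoulli($p$) trials. Using $t\leq T$ to replace $p$ by the smaller value $e^{-aT}$ (monotone in the direction we need) and applying Hoeffding's inequality to this binomial then yields
\begin{equation*}
F_m^{n-1}(t)\leq \mathrm{Pr}\bigl(\mathrm{Binom}(n-1,e^{-aT})<m\bigr)\leq \exp\!\left(-2n\left(e^{-aT}-\frac{m}{n}\right)_+^{\!2}\right),
\end{equation*}
which is \eqref{ineq F1}. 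The only delicate point is pushing the constant $2(n-1)$ coming from the naive Hoeffding bound down to the sharper $2n$ factor; this is the combinatorial argument carried out in \cite[Lemma 4.8, Proposition 5.1]{lacker2023hierarchies}, which we simply quote.
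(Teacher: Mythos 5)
Your Yule-process reading of $F_m^\ell(t)=\mathrm{Pr}(X_t\ge \ell+1)$ and $G_m^\ell(t)=\mathrm{Pr}(X_t=\ell)$, via the substitution $u_j=t_m-t_j$, is precisely the interpretation behind the cited result in \cite{lacker2023hierarchies}, which the paper quotes without reproducing; the identity $\E[X_t]=me^{at}$ for the linear birth process then gives \eqref{ineq F2} (in fact with equality) and \eqref{ineq G1} (under $a\ge 1$, which, as you observe, the paper does impose when it later chooses the cut-off). One small improvement: for \eqref{ineq F1} you did not actually need to defer to the reference for the constant. With $q:=e^{-aT}$ and in the nontrivial regime $q\ge m/n$, your negative-binomial duality plus the one-sided Hoeffding bound gives $\mathrm{Pr}\bigl(\mathrm{Binom}(n-1,q)\le m-1\bigr)\le \exp\bigl(-2((n-1)q-(m-1))^2/(n-1)\bigr)$, and since $(n-1)q-(m-1)=(nq-m)+(1-q)\ge nq-m\ge 0$ together with $1/(n-1)\ge 1/n$, the exponent is at least $2(nq-m)^2/n=2n(q-m/n)^2$, which is exactly \eqref{ineq F1}; when $q<m/n$ the right-hand side is $1$ and the bound is trivial. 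So the argument you sketched already delivers the stated form without invoking a further combinatorial refinement.
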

We refer to the nice argument in \cite[Section 5]{lacker2023hierarchies} for the proof of Lemma \ref{lemma combi}. Next, we have the following estimate.
\begin{lemma}\label{lemma umn}
For $n\geq 2me^{aT}$, it holds
\begin{equation}\label{ineq umT}
u_m^{(n)} (T)\leq u_0 m a\exp(2aT)+\frac{C_2T\exp(2aT)m}{a\exp[C_1a^{4/(\gamma^2+2\gamma)}]}+ \frac{C_4\exp(5aT)}{n},
\end{equation}
where the constants $C_1,C_2$ and $C_4$ are independent of $m,n,T$ and $a$.
\end{lemma}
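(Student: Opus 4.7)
The plan is to combine the iterated bound on $u_m^{(n)}(T)$ derived just before the statement with the combinatorial estimates of Lemma \ref{lemma combi}, together with the trivial bound \eqref{ineq trivial bound} and the identity \eqref{ineq consistency1}. More precisely, I start from
$$
u_m^{(n)}(T)\leq \sum_{\ell=m}^{n-1}\Bigl[u_\ell^{(n)}(0)\,\t G_m^\ell(T)+\frac{C_2 T}{a\exp[C_1 a^{4/(\gamma^2+2\gamma)}]}\,\t F_m^\ell(T)\Bigr]+\t F_m^{n-1}(T)\sup_{t\in[0,T]}u_n^{(n)}(t),
$$
and use $\t F_m^\ell(T)\leq e^{aT}F_m^\ell(T)$ and $\t G_m^\ell(T)=e^{aT}G_m^\ell(T)$ to reduce everything to quantities controlled by Lemma \ref{lemma combi}. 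Then I plug in $u_\ell^{(n)}(0)=\ell u_0$ from \eqref{ineq consistency1} and $u_n^{(n)}(T)\leq 2nr_0^2$ from \eqref{ineq trivial bound}.

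The first sum becomes $e^{aT}u_0\sum_{\ell\geq m}\ell G_m^\ell(T)\leq e^{aT}u_0\cdot ma\,e^{aT}=u_0\,m\,a\,e^{2aT}$ by \eqref{ineq G1}. The second sum becomes $\frac{C_2 T e^{aT}}{a\exp[C_1 a^{4/(\gamma^2+2\gamma)}]}\sum_{\ell\geq m}F_m^\ell(T)\leq \frac{C_2 T m\,e^{2aT}}{a\exp[C_1 a^{4/(\gamma^2+2\gamma)}]}$ by \eqref{ineq F2}, which is exactly the second term of \eqref{ineq umT}.

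The delicate term is the last one, $e^{aT}F_m^{n-1}(T)\cdot 2nr_0^2$, where I need to extract the factor $e^{5aT}/n$. Here the assumption $n\geq 2me^{aT}$ enters decisively: it gives $m/n\leq e^{-aT}/2$, so $(e^{-aT}-m/n)_+\geq e^{-aT}/2$ and \eqref{ineq F1} yields
$$
F_m^{n-1}(T)\leq \exp\!\bigl(-\tfrac12\,n\,e^{-2aT}\bigr).
$$
To convert the resulting $n\exp(-\tfrac12 n e^{-2aT})$ into a $1/n$ bound, I use the elementary fact that $\sup_{x>0}x^2 e^{-\alpha x}=4/(\alpha^2 e^2)$ with $\alpha=e^{-2aT}/2$, which gives $n^2\exp(-\tfrac12 n e^{-2aT})\leq 16 e^{4aT}/e^2$, hence
$$
e^{aT}\cdot 2nr_0^2\cdot F_m^{n-1}(T)\leq \frac{C\,r_0^2\,e^{5aT}}{n}=:\frac{C_4\,e^{5aT}}{n}.
$$
Summing the three contributions yields \eqref{ineq umT}.

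The main obstacle is really this last step, tracking the dependence on $a$ and $T$ in the residual term coming from the truncation at level $n$: one must exploit the Gaussian-type decay of $F_m^{n-1}(T)$ precisely enough to absorb the crude linear-in-$n$ factor from \eqref{ineq trivial bound} and still obtain a tidy $e^{5aT}/n$ bound under the threshold $n\geq 2me^{aT}$. Everything else is routine algebra once Lemma \ref{lemma combi} is invoked.
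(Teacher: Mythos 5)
Your proof is correct and follows essentially the same route as the paper: reduce to $F_m^\ell$ and $G_m^\ell$, apply Lemma \ref{lemma combi} to the first two sums, and for the truncation remainder use $n\geq 2me^{aT}$ in \eqref{ineq F1} to obtain the Gaussian-type decay $\exp(-\tfrac12 n e^{-2aT})$, then absorb the linear-in-$n$ factor via an elementary $x^2 e^{-x}$-type bound. The paper uses $x^2 e^{-x}\leq 1$ at $x=\tfrac12 n e^{-2aT}$ rather than your $\sup_x x^2 e^{-\alpha x}$ form, but these are the same calculation.
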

\begin{proof}[Proof of Lemma \ref{lemma umn}]
By using the estimate \eqref{ineq trivial bound} and \eqref{ineq consistency1}, we have
 $$
\begin{aligned}
u_m^{(n)} (T)\leq\& u_0\sum_{\ell=m }^{n-1}\left[\ell \t{G}_m ^\ell(T)+\frac{C_2T}{a\exp[C_1a^{4/(\gamma^2+2\gamma)}]}\t{F}_m ^\ell(T)\right]+ C_3n \t{F}_m ^{n-1}(T).
\end{aligned}
$$   
We rewrite it in the form with $F_m^\ell$ and $G_m^\ell$ such as
\begin{equation}\label{ineq mn}
\begin{aligned}
u_m^{(n)} (T)\leq\& \exp(aT)u_0\sum_{\ell=m }^{n-1}\left(  \ell G_m ^\ell(T)\right)+\frac{C_2T\exp(aT)}{a\exp[C_1a^{4/(\gamma^2+2\gamma)}]}\sum_{\ell=m }^{n-1} F_m^\ell(T)\\\&\qquad\qquad\qquad+ C_3n\exp(aT) F_m^{n-1}(T).
\end{aligned}
\end{equation}
 We now estimate the right-hand side of \eqref{ineq mn} term by term. For the first term, it holds by \eqref{ineq G1}
$$
u_0\exp(aT)\sum_{\ell=m }^{n-1}\left(  \ell G_m ^\ell(T)\right)\leq u_0 m a\exp(2aT).
$$
The second term has the bound thanks to \eqref{ineq F2}
$$
\frac{C_2T\exp(aT)}{a\exp[C_1a^{4/(\gamma^2+2\gamma)}]}\sum_{\ell=m }^{n-1} F_m ^\ell(T)\leq \frac{C_2T\exp(2aT)m}{a\exp[C_1a^{4/(\gamma^2+2\gamma)}]}.
$$
When $n\geq 2me^{aT}$ and the last can be estimated as
$$
\begin{aligned}
C_3\exp(aT)nF_m^{n-1}(T)\leq \&C_3\exp(aT)n\exp\left(-2n\left(e^{-aT}-\frac{m}{n}\right)_+^2\right)\\\leq \&C_3\exp(aT)n\exp(-\frac{n}{2}e^{-2aT})\\\leq \& C_4 \frac{\exp(5aT)}{n},
\end{aligned}
$$
where the last step thanks to $x^2\exp(-x)\leq 1$ for $x>0$.
Collecting those estimates above, we arrive at the desired bound.
\end{proof}

Finally, we  present the proof of the stability estimate \eqref{ineq stability}.
\begin{proof}[Proof of Theorem \ref{thm uniqueness}]

By using the fact \eqref{ineq consistency3}, it holds for any $n\geq 2m e^{aT}$ that 
$$
\W_2^2\left(f_m(T),\t f_m(T)\right)\leq u_m^{(n)} (T)\leq u_0 m a\exp(2aT)+\frac{C_2T\exp(2aT)m}{a\exp[C_1a^{4/(\gamma^2+2\gamma)}]}+ \frac{C_4\exp(5aT)}{n}.
$$
Then, we have the following estimate for $a\geq 1$ 
$$
\begin{aligned}
\W_2^2\left(f_m(T),\t f_m(T)\right)\leq\& u_0 m a\exp(2aT)+\frac{C_2T\exp(2aT)m}{a\exp[C_1a^{4/(\gamma^2+2\gamma)}]}\\\leq\& m\left(u_0 +\frac{C_2T}{a^2\exp[C_1a^{4/(\gamma^2+2\gamma)}]}\right)a\exp(2aT)\\\leq\& m\left(u_0 +\frac{C_2T}{\exp[C_1a^{4/(\gamma^2+2\gamma)}]}\right)\exp(3aT),   
\end{aligned}
$$
where the constants $C_1$ and $C_2$ are independent of $a$, $m$, $n$ and $T$. We choose our suitable cut-off $a$ such that
$$
\exp\left(C_1a^{\frac{4}{\gamma^2+2\gamma}}\right)=\frac{u_0+1}{u_0},\quad\text{i.e.,}\quad a=\left(\frac{1}{C_1}\log\left(1+\frac{1}{u_0}\right)\right)^\frac{\gamma^2+2\gamma}{4}
$$
Then, 
$$
\begin{aligned}
\W_2^2\left(f_m(T),\t f_m(T)\right)\leq \& m \left(u_0 +\frac{C_2Tu_0}{u_0+1}\right)\exp\left[\frac{3T}{C_1^{(\gamma^2+2\gamma)/4}}\left(\log\left(1+\frac{1}{u_0}\right)\right)^\frac{\gamma^2+2\gamma}{4}\right]\\
\leq\& m u_0\left(1 +C_2T\right)\left[\exp\left(\log\left(1+\frac{1}{u_0}\right)\right)^\frac{\gamma^2+2\gamma}{4}\right]^{C_5(\gamma,T)}
\end{aligned}
$$
Since $\gamma^2+2\gamma<4$ for any $\gamma\in(0,1]$, it holds for any parameter $\eta'\in(0,\frac{1}{C_5})$ such that 
$$
\exp\left[\left(\log\left(1+\frac{1}{u_0}\right)\right)^\frac{\gamma^2+2\gamma}{4}\right]\leq \left(1+\frac{1}{u_0}\right)^{\eta'},
$$
which implies for any $\eta:=\eta'C_5\in(0,1)$
$$
\W_2^2\left(f_m(T),\t f_m(T)\right)\leq C(1+T)m u_0\left(1+\frac{1}{u_0}\right)^\eta\leq Cm(1+T) (u_0)^{1-\eta}.
$$
We conclude the stability estimate of weak solutions of the Landau hierarchy \eqref{eq Landau hierarchy} for each $m\in\N$.
    
\end{proof}

\section*{Acknowledgements}
The author thanks for the fruitful discussion with José A. Carrillo, Pierre-Emmanuel Jabin, Yifan Jiang and Dejun Luo.

\bigskip

\bibliographystyle{abbrv}
\bibliography{ref}

\begin{thebibliography}{10}

\bibitem{alonso2013new}
R.~Alonso, J.~A. Canizo, I.~Gamba, and C.~Mouhot.
\newblock A new approach to the creation and propagation of exponential moments in the {B}oltzmann equation.
\newblock {\em Communications in Partial Differential Equations}, 38(1):155--169, 2013.

\bibitem{ampatzoglou2025global}
I.~Ampatzoglou, J.~K. Miller, N.~Pavlovi{\'c}, and M.~Taskovi{\'c}.
\newblock On the global in time existence and uniqueness of solutions to the {B}oltzmann hierarchy.
\newblock {\em Journal of Functional Analysis}, page 111079, 2025.

\bibitem{bobylev1997moment}
A.~V. Bobylev.
\newblock Moment inequalities for the {B}oltzmann equation and applications to spatially homogeneous problems.
\newblock {\em Journal of statistical physics}, 88:1183--1214, 1997.

\bibitem{bresch2022new}
D.~Bresch, P.-E. Jabin, and J.~Soler.
\newblock A new approach to the mean-field limit of {V}lasov-{F}okker-{P}lanck equations.
\newblock {\em Analysis and PDE, to appear}, 2025.

\bibitem{cao2024propagation}
C.~Cao, L.-B. He, and J.~Ji.
\newblock Propagation of moments and sharp convergence rate for inhomogeneous noncutoff {B}oltzmann equation with soft potentials.
\newblock {\em SIAM Journal on Mathematical Analysis}, 56(1):1321--1426, 2024.

\bibitem{carrapatoso2016propagation}
K.~Carrapatoso.
\newblock Propagation of chaos for the spatially homogeneous {L}andau equation for {M}axwellian molecules.
\newblock {\em Kinetic and Related Models}, 9(1):1--49, 2016.

\bibitem{carrillo2024relative}
J.~A. Carrillo, X.~Feng, S.~Guo, P.-E. Jabin, and Z.~Wang.
\newblock Relative entropy method for particle approximation of the {L}andau equation for {M}axwellian molecules.
\newblock {\em arXiv preprint arXiv:2408.15035}, 2024.

\bibitem{carrillo2025fisher}
J.~A. Carrillo and S.~Guo.
\newblock From {F}isher information decay for the {K}ac model to the {L}andau-{C}oulomb hierarchy.
\newblock {\em arXiv preprint arXiv:2502.18606}, 2025.

\bibitem{carrillo2024mean}
J.~A. Carrillo, S.~Guo, and P.-E. Jabin.
\newblock Mean-field derivation of {L}andau-like equations.
\newblock {\em Applied Mathematics Letters}, page 109195, 2024.

\bibitem{chen2019local}
T.~Chen, R.~Denlinger, and N.~Pavlovi{\'c}.
\newblock Local well-posedness for {B}oltzmann’s equation and the {B}oltzmann hierarchy via {W}igner transform.
\newblock {\em Communications in Mathematical Physics}, 368:427--465, 2019.

\bibitem{chen2023derivation}
X.~Chen and J.~Holmer.
\newblock The derivation of the {B}oltzmann equation from quantum many-body dynamics.
\newblock {\em arXiv preprint arXiv:2312.08239}, 2023.

\bibitem{chen2024derivation}
X.~Chen, S.~Shen, J.~Wu, and Z.~Zhang.
\newblock The derivation of the compressible {E}uler equation from quantum many-body dynamics.
\newblock {\em Peking Mathematical Journal}, 7(1):35--90, 2024.

\bibitem{cortez2018quantitative}
R.~Cortez and J.~Fontbona.
\newblock Quantitative uniform propagation of chaos for {M}axwell molecules.
\newblock {\em Communications in Mathematical Physics}, 357(3):913--941, 2018.

\bibitem{desvillettes2000spatially1}
L.~Desvillettes and C.~Villani.
\newblock On the spatially homogeneous {L}andau equation for hard potentials part i: existence, uniqueness and smoothness.
\newblock {\em Communications in Partial Differential Equations}, 25(1-2):179--259, 2000.

\bibitem{erdos2006derivation}
L.~Erd{\"o}s, B.~Schlein, and H.-T. Yau.
\newblock Derivation of the {G}ross-{P}itaevskii hierarchy for the dynamics of {B}ose-{E}instein condensate.
\newblock {\em Communications on pure and applied mathematics}, 59(12):1659--1741, 2006.

\bibitem{erdos2007derivation}
L.~Erd{\"o}s, B.~Schlein, and H.-T. Yau.
\newblock Derivation of the cubic non-linear {S}chr{\"o}dinger equation from quantum dynamics of many-body systems.
\newblock {\em Inventiones mathematicae}, 167(3):515--614, 2007.

\bibitem{erdos2010derivation}
L.~Erd{\"o}s, B.~Schlein, and H.-T. Yau.
\newblock Derivation of the {G}ross-{P}itaevskii equation for the dynamics of {B}ose-{E}instein condensate.
\newblock {\em Annals of Mathematics}, pages 291--370, 2010.

\bibitem{feng2025kac}
X.~Feng and Z.~Wang.
\newblock Kac's program for the {L}andau equation.
\newblock {\em arXiv preprint arXiv:2506.14309}, 2025.

\bibitem{fontbona2009measurability}
J.~Fontbona, H.~Gu{\'e}rin, and S.~M{\'e}l{\'e}ard.
\newblock Measurability of optimal transportation and convergence rate for {L}andau type interacting particle systems.
\newblock {\em Probability theory and related fields}, 143(3):329--351, 2009.

\bibitem{fournier2009particle}
N.~Fournier.
\newblock Particle approximation of some {L}andau equations.
\newblock {\em Kinetic and Related Models}, 2(3):451--464, 2009.

\bibitem{fournier2021exponential}
N.~Fournier.
\newblock On exponential moments of the homogeneous {B}oltzmann equation for hard potentials without cutoff.
\newblock {\em Communications in Mathematical Physics}, 387:973--994, 2021.

\bibitem{fournier2017kac}
N.~Fournier and A.~Guillin.
\newblock From a {K}ac-like particle system to the {L}andau equation for hard potentials and {M}axwell molecules.
\newblock In {\em Annales Scientifiques de l'{\'E}cole Normale Sup{\'e}rieure}, volume~50, pages 157--199, 2017.

\bibitem{fournier2016propagation}
N.~Fournier and M.~Hauray.
\newblock Propagation of chaos for the {L}andau equation with moderately soft potentials.
\newblock {\em The Annals of Probability}, 44(6):3581--3660, 2016.

\bibitem{fournier2021stability}
N.~Fournier and D.~Heydecker.
\newblock Stability, well-posedness and regularity of the homogeneous {L}andau equation for hard potentials.
\newblock {\em Ann. Inst. H. Poincar\'{e} C Anal. Non Lin\'{e}aire}, 38(6):1961--1987, 2021.

\bibitem{gallagher2013newton}
I.~Gallagher, L.~Saint-Raymond, and B.~Texier.
\newblock {\em From Newton to {B}oltzmann: hard spheres and short-range potentials}.
\newblock European Mathematical Society Z{\"u}rich, Switzerland, 2013.

\bibitem{golse2016dynamics}
F.~Golse.
\newblock On the dynamics of large particle systems in the mean field limit.
\newblock {\em Macroscopic and large scale phenomena: coarse graining, mean field limits and ergodicity}, pages 1--144, 2016.

\bibitem{golse13empirical}
F.~Golse, C.~Mouhot, and V.~Ricci.
\newblock Empirical measures and {V}lasov hierarchies.
\newblock {\em Kinetic and Related Models}, 6(4):919--943, 2013.

\bibitem{guillen2025landau}
N.~Guillen and L.~Silvestre.
\newblock The {L}andau equation does not blow up.
\newblock {\em Acta Mathematica, to appear}, 2025.

\bibitem{han2022entropic}
Y.~Han.
\newblock Entropic propagation of chaos for mean field diffusion with ${L}^p$ interactions via hierarchy, linear growth and fractional noise.
\newblock {\em arXiv preprint arXiv:2205.02772}, 2022.

\bibitem{jabin2014review}
P.-E. Jabin.
\newblock A review of the mean field limits for {V}lasov equations.
\newblock {\em Kinetic and Related Models}, 7(4):661, 2014.

\bibitem{kac1956foundations}
M.~Kac.
\newblock Foundations of kinetic theory.
\newblock In {\em Proceedings of the {T}hird {B}erkeley {S}ymposium on {M}athematical {S}tatistics and {P}robability, 1954--1955, vol. {III}}, pages 171--197. Univ. California Press, Berkeley-Los Angeles, Calif., 1956.

\bibitem{king1975bbgky}
F.~G. King.
\newblock {\em {BBGKY} Hierarchy for positive potentials}.
\newblock University of California, Berkeley, 1975.

\bibitem{klainerman2008uniqueness}
S.~Klainerman and M.~Machedon.
\newblock On the uniqueness of solutions to the {G}ross-{P}itaevskii hierarchy.
\newblock {\em Communications in mathematical physics}, 279(1):169--185, 2008.

\bibitem{lacker2023hierarchies}
D.~Lacker.
\newblock Hierarchies, entropy, and quantitative propagation of chaos for mean field diffusions.
\newblock {\em Probability and Mathematical Physics}, 4(2):377--432, 2023.

\bibitem{landau1936kinetische}
L.~Landau.
\newblock Die kinetische {G}leichung fuer den fall {C}oulombscher wechselwirkung.
\newblock {\em Phys. Z. Sowjet. 10 (1936)}, 10:163, 1936.

\bibitem{lanford2005time}
O.~E. Lanford~III.
\newblock Time evolution of large classical systems.
\newblock {\em Dynamical Systems, Theory and Applications: Battelle Seattle 1974 Rencontres}, pages 1--111, 2005.

\bibitem{li2019quantitative}
H.~Li and D.~Luo.
\newblock Quantitative stability estimates for {F}okker--{P}lanck equations.
\newblock {\em Journal de Math{\'e}matiques Pures et Appliqu{\'e}es}, 122:125--163, 2019.

\bibitem{liu2024rate}
C.~Liu, L.~Xu, and A.~Zhang.
\newblock Rate of convergence of the {K}ac particle system for the {B}oltzmann equation with hard potentials.
\newblock {\em arXiv preprint arXiv:2409.04031}, 2024.

\bibitem{Miot2011kac}
E.~Miot, M.~Pulvirenti, and C.~Saffirio.
\newblock On the {K}ac model for the {L}andau equation.
\newblock {\em Kinetic and Related Models}, 4(1):333--344, 2011.

\bibitem{mischler2013kac}
S.~Mischler and C.~Mouhot.
\newblock Kac’s program in kinetic theory.
\newblock {\em Inventiones mathematicae}, 193:1--147, 2013.

\bibitem{narnhofer1981vlasov}
H.~Narnhofer and G.~L. Sewell.
\newblock Vlasov hydrodynamics of a quantum mechanical model.
\newblock {\em Communications in Mathematical Physics}, 79(1):9--24, 1981.

\bibitem{spohn1981vlasov}
H.~Spohn and H.~Neunzert.
\newblock On the {V}lasov hierarchy.
\newblock {\em Mathematical Methods in the Applied Sciences}, 3(1):445--455, 1981.

\bibitem{sznitman1984equations}
A.~S. Sznitman.
\newblock {\'E}quations de type de {B}oltzmann, spatialement homogenes.
\newblock {\em Zeitschrift f{\"u}r Wahrscheinlichkeitstheorie und verwandte Gebiete}, 66:559--592, 1984.

\bibitem{tabary2025propagation}
C.~Tabary.
\newblock Propagation of chaos for the {L}andau equation with very soft potentials.
\newblock {\em arXiv preprint arXiv:2506.15795}, 2025.

\bibitem{wang2024sharp}
S.~Wang.
\newblock Sharp local propagation of chaos for mean field particles with ${W}^{-1, \infty}$ kernels.
\newblock {\em arXiv preprint arXiv:2403.13161}, 2024.

\end{thebibliography}

\end{document}